\begin{document}


\title[]{Arithmeticity for periods of automorphic forms}

\author{\bf Wee Teck  Gan \ \ \and \ \ A. Raghuram}

\address{Wee Teck Gan: Department of Mathematics, National University of Singapore, 10 Lower Kent Ridge Road
Singapore 119076}
\email{matgwt@nus.edu.sg}

\address{A. Raghuram: Indian Institute of Science Education and Research (IISER), First floor, Central Tower, Sai Trinity Building Garware Circle, Sutarwadi, Pashan Pune, Maharashtra 411021, India.}

\email{raghuram@iiser.pune.ac.in}

\thanks{W.T.G is partially supported by a startup grant at the National University of Singapore. A.R. is partially supported by the National Science Foundation (NSF), award number DMS-0856113, and an Alexander von Humboldt Research Fellowship.}

\date{\today}
\subjclass{ }

\maketitle


\def\g{\mathfrak{g}}
\def\k{\mathfrak{k}}
\def\z{\mathfrak{z}}
\def\h{{\mathfrak h}}
\def\gl{\mathfrak{gl}}

\def\Ext{{\rm Ext}}
\def\Hom{{\rm Hom}}
\def\Ind{{\rm Ind}}

\def\GL{{\rm GL}}
\def\SL{{\rm SL}}
\def\SO{{\rm SO}}
\def\O{{\rm O}}
\def\Sp{{\rm Sp}}
\def\GSpin{{\rm GSpin}}
\def\U{{\rm U}}

\def\R{\mathbb{R}}
\def\C{\mathbb{C}}
\def\Z{\mathbb{Z}}
\def\Q{\mathbb{Q}}
\def\A{\mathbb{A}}
\def\S{\mathcal{S}}
\def\E{\mathcal{E}}
\def\W{\mathcal{W}}
\def\G{\mathcal{G}}

\def\w{\wedge}

\def\autc{{\rm Aut}({\mathbb C})}
\def\bs{\backslash}
\def\Cat{\mathcal{C}}
\def\HC{{\rm HC}}
\def\HCat{\Cat^\HC}
\def\proj{{\rm proj}}

\def\to{\rightarrow}
\def\To{\longrightarrow}

\def\1{1\!\!1}
\def\dim{{\rm dim}}

\def\th{^{\rm th}}
\def\isom{\approx}

\def\CE{\mathcal{C}\mathcal{E}}



\makeatletter
\newcommand{\bfgreek}[1]{\bm{\@nameuse{up#1}}}
\makeatother
\newcommand{\micamp}{\mbox{$\bfgreek{mu}$A}}
\newcommand{\micronup}{\mbox{$\bfgreek{mu}$m}}
\newcommand{\Picamp}{\mbox{$\bfgreek{Pi}$A}}
\newcommand{\Picronup}{\mbox{$\bfgreek{Pi}$m}}

\numberwithin{equation}{section}
\newtheorem{thm}[equation]{Theorem}
\newtheorem{cor}[equation]{Corollary}
\newtheorem{lemma}[equation]{Lemma}
\newtheorem{prop}[equation]{Proposition}
\newtheorem{con}[equation]{Conjecture}
\newtheorem{ass}[equation]{Assumption}
\newtheorem{defn}[equation]{Definition}
\newtheorem{rem}[equation]{Remark}
\newtheorem{exer}[equation]{Exercise}
\newtheorem{exam}[equation]{Example}


\section{Introduction}

Let $G$ be a connected reductive algebraic group over a number field $F$, and 
let $(\pi,V_\pi)$ be a cuspidal automorphic representation of $G(\A_F)$. Let $H$ be an algebraic $F$-subgroup of $G$, and let $\chi$ be an automorphic  character of $H(\A_F).$  We say that $\pi$ has a non-vanishing  $(H,\chi)$-period if the functional 
\begin{equation}
\label{eqn:l-chi}
\phi \mapsto \ell_\chi(\phi) := \int_{[H]} \chi(h)^{-1}\phi(h)\, dh, \ \ \ \phi \in V_\pi, 
\end{equation}
is nonzero, where 
$[H] := H(F)\backslash H(\A_F)$ or sometimes $[H] := Z_G(\A_F)H(F)\backslash H(\A_F).$
 Let us now suppose that we are in an arithmetic situation, in as much as that we can talk of the automorphic representation ${}^{\sigma}\pi$ for any $\sigma \in \autc.$ For example, if $\pi$ is a cohomological cuspidal automorphic representation of $\GL_n(\A_F)$ then, by a result of Clozel (see Theorem~\ref{thm:clozel} below), 
we know that so is ${}^{\sigma}\pi$. 
In this paper we study, mostly by the way of presenting a lot of examples, the dictum:
{\it $\pi$ has a non-vanishing  $(H,\chi)$-period if and only if 
${}^{\sigma}\pi$ has a non-vanishing  $(H, {}^{\sigma}\chi)$-period.} It is this dictum that we call `arithmeticity for periods of  automorphic forms.'  
\vskip 5pt

Let us remind the reader that automorphisms of $\C$, with the exceptions of the identity automorphism and complex-conjugation, are discontinuous; in particular, it is almost never the case that $\sigma(\ell_\chi(\phi)) = \ell_{\sigma\circ \chi}(\sigma(\phi)).$ So one cannot naively take the $\sigma$ inside the integral sign. In every example that we study, the dictum holds, and the argument is always indirect via some characterization of existence of such periods. 
\vskip 5pt

Let us also observe at the outset that the problem is a distinctly global problem. The corresponding local problem, at any finite place $v$, is trivial: if 
$\pi_v$  is $(H_v,\chi_v)$-distinguished, i.e., there exists a nonzero functional $\ell : \pi_v \to \C$ such that $\ell(\pi_v(h)v) = \chi_v(h)\ell(v)$ for all $h \in H(F_v)$. For any $\sigma \in \autc,$ it is easy to see that $\sigma \circ \ell$ gives a 
$(H_v, {}^\sigma \chi_v)$-distinguishing functional for the conjugated representation ${}^\sigma\pi_v.$ 
\vskip 5pt

The above local observation says that the problem of arithmeticity of automorphic periods is a consequence of a positive solution of  the classical local-to-global problem: `If $\tau$ is an automorphic representation, and suppose at every place $v$,  $\tau_v$ is $(H_v,\chi_v)$-distinguished, then does $\tau$ have a nonzero global $(H,\chi)$-period?' Here, take $\tau$ to be ${}^\sigma\pi$.

\medskip

Given a cuspidal automorphic representation $\pi$ of $G(\A),$ and a $\sigma \in \autc$ we need to discuss when the representation ${}^\sigma\pi$ makes sense. This will be possible when the representation $\pi$ contributes to the cohomology of a locally symmetric space of $G$ with coefficients in a sheaf attached to a finite-dimensional coefficient system. In Section~\ref{sec:cohomology} we briefly discuss the appropriate cohomological preliminaries needed to talk about the Galois-conjugated representation ${}^\sigma\pi$ for a general $G$, and in Section~\ref{sec:GL(N)} we explicate the case of $\GL(n)$ and recall the basic Theorem \ref{thm:clozel} due to Clozel which says that ${}^\sigma \pi$ is also a cuspidal cohomological representation. In Theorem \ref{thm:classical}, we give the natural generalization of Clozel's theorem to certain classical groups, exploiting the recent results of Arthur \cite{arthur-book}. 

\vskip 5pt

In Section~\ref{sec:gl2}, we begin by looking at two of the easiest nontrivial examples when the ambient group $G = \GL_2/F$. In particular, in the $\GL_2$ context, we look at the question of arithmeticity for Whittaker periods which boils down to every cuspidal automorphic representation being globally generic and that the space of cuspidal cohomology having a rational structure; indeed, the same ingredients give arithmeticity for Whittaker periods when $G = \GL_n/F$. 
The other $\GL_2$ example we analyze is $(\GL_1, \chi)$-periods for Hecke characters $\chi$; here, 
arithmeticity is a consequence of   Manin's and Shimura's classical algebraicity results on the critical values of $L$-functions for $\GL_2$. 
\vskip 5pt

In the rest of the paper we analyze the following situations for arithmeticity problems, which  are various generalizations of the $\GL_2$ cases considered in Section~\ref{sec:gl2}:
\begin{enumerate}
\item Shalika period integrals for representations of $\GL_{2n}$. See Theorem~\ref{thm:arithmeticity-shalika}. The nonvanishing of Shalika period integrals is characterized in terms of functorial transfers from $\GSpin(2n+1)$.   

\smallskip

\item  $\GL(n)/F$-periods for representations of $\GL(n)/E$, for a quadratic extension $E/F.$ See Theorem \ref{thm:arithmeticity-unitary}.
The nonvanishing of such period integrals is characterized in terms of functorial transfers from the unitary groups $\U(n)$.

\smallskip 

\item $\GL(n-1)$-periods for representations of $\GL(n) \times \GL(n-1).$ 
See Theorem~\ref{thm:ggp-gln}. These are the Gross-Prasad periods for the general linear groups.  

\smallskip

\item $\GL(n) \times \GL(n)$-periods for representations of $\GL(2n)$ over a totally real field.  
See Theorem~\ref{thm:gl2n}. 

\smallskip

\item  Whittaker and Gross-Prasad periods for classical groups. See Theorem~\ref{thm:whittaker-orthogonal}. 

 \end{enumerate}

\medskip

It is clear that these examples are pointing toward some general motivic interpretation of period integrals. 
Automorphic representations with a nonzero $(H,\chi)$-period are usually characterized in terms of functorial transfers and/or  in terms of some $L$-function attached to $\pi$ having a pole or (not having) a zero at a certain point. In terms of $L$-values, the situation is very similar to a conjecture of Gross on motivic $L$-functions; see \cite[Conjecture 2.7 (ii)]{deligne}. This says that for a critical motive $M$, the order of vanishing of the critical $L$-value $L(\sigma, M, 0)$ is independent of the conjugating automorphism $\sigma.$ In our situation, suppose $\pi$ corresponds to a motive, and suppose having a non-vanishing $(H,\chi)$-period corresponds to the (non-)vanishing of an $L$-value attached to $\pi$ which happens to be a critical $L$-value, then Gross's conjecture would predict the validity of the dictum. For example, the situation in (3), respectively (4), above exactly ties up with critical $L$-values of the underlying Rankin-Selberg $L$-function, 
respectively the standard $L$-function.

\bigskip

\noindent{\it Acknowledgements:} We thank Jeff Adams for several helpful discussions concerning the proof of the results in Section \ref{S:classical}, and Harald Grobner for many comments on
a first draft of this paper which corrected some errors and greatly improved the exposition.
It is also a pleasure to thank Dipendra Prasad, C.S.~Rajan, 
A.~Sankaranarayanan and Jyoti Sengupta for organizing a very memorable international colloquium at TIFR during which this article took shape. This article was completed during the first author's visit at the IHES at Bures-sur-Yvette in July 2012; the first author thanks the IHES for its support and for providing a peaceful yet stimulating working environment.

\vskip 5pt

\section{Cohomological preliminaries} 
\label{sec:cohomology}

We will often talk about a `cohomological cuspidal automorphic representation'. In this section, we will briefly review its definition and discuss some of its very basic properties that we need later. 

Let $G/\Q$ be a connected reductive algebraic group over
$\Q$ and let $S$ be the maximal $\Q$-split
torus in the center $Z$ of $G$. Let $C_{\infty}$ be a maximal compact subgroup of
$G(\R)$ and let $K_\infty = C_\infty S(\R)$. The connected
component of the identity of $K_{\infty}$ is denoted
$K_{\infty}^{\circ}.$ 
For any open-compact subgroup $K_f \subset G(\A_f)$, define the locally symmetric space
$$
S^G_{K_f} := G(\Q)\backslash G(\A)/ K_{\infty}^{\circ} K_f. 
$$
Allowing $K_f$ to vary, one has an inverse system $S^G = \projlim S^G_{K_f}$ of locally symmetric spaces.
\vskip 5pt

Fix a maximal torus $T/\Q \subset G/\Q$, with associated absolute Weyl group $W(G,T) = (N_G(T)/T)(\C)$. Set $X(T) = \Hom_{\C}(T, \mathbb{G}_m)$, so that $W(G,T)$ acts naturally on $X(T)$.
By the theory of highest weight, each $W(G,T)$-orbit $\mu$ in  $X(T)$ corresponds to 
 an  irreducible algebraic representation $E_{\mu}$ of $G(\C)$ with highest weight $\mu$. Let $\E_{\mu}$ be the associated (inverse system of) sheaf on $S^G $. We are interested in the 
sheaf cohomology groups 
$$
H^{\bullet}(S^G, \E_{\mu}): = \injlim_{K_f}  H^{\bullet}(S^G_{K_f} , \E_{\mu})
$$
on which the finite adelic group $G(\A_f)$ acts naturally.
\vskip 5pt

We can compute the above sheaf cohomology via the de~Rham complex, and then reinterpreting the de~Rham complex in terms of the complex computing relative Lie algebra cohomology, we get the isomorphism: 
$$
H^{\bullet}(S^G, \E_{\mu})  \simeq H^{\bullet}(\g_{\infty}, K_{\infty}^{\circ}; \  C^{\infty}(G(\Q)\backslash G(\A)) \otimes E_{\mu}) .
$$
 The inclusion $C^{\infty}_{\rm cusp} (G(\Q)\backslash G(\A)) \hookrightarrow C^{\infty}(G(\Q)\backslash G(\A))$ of the space of smooth cusp forms  in the space of all smooth functions induces, via well-known results of 
Borel \cite{borel-duke},  an injection in cohomology; this defines cuspidal cohomology: 
$$
\xymatrix{
H^{\bullet}(S^G, \E_{\mu}) 
\ar[rr] & &
H^{\bullet}(\g_{\infty},K_{\infty}^{\circ}; C^{\infty}(G(\Q)\backslash G(\A)) \otimes E_{\mu})  \\
H^{\bullet}_{\rm cusp}(S^G, \E_{\mu}) \ar@{^{(}->}[u]
\ar[rr] 
& & 
H^{\bullet}(\g_{\infty},K_{\infty}^{\circ}; C^{\infty}_{\rm cusp}(G(\Q)\backslash G(\A))
\otimes E_{\mu}) \ar@{^{(}->}[u]
}$$
Using the usual decomposition of the space of cusp forms into a direct sum of cuspidal automorphic representations, we get the following fundamental decomposition 
\begin{equation}
H^{\bullet}_{\rm cusp}(S^G, \E_{\mu}) = \bigoplus_{\Pi} 
H^{\bullet}(\g_{\infty},K_{\infty}^{\circ};  \Pi_{\infty} \otimes  E_{\mu}) \otimes \Pi_f
\end{equation}
of $\pi_0(G_{\infty}) \times G(\A_f)$-modules.
\vskip 5pt

We say that {\it $\Pi$ is a cohomological cuspidal automorphic representation} if  
$\Pi$ has a nonzero contribution to the above decomposition for some $\mu$,
or equivalently, if $\Pi$ is a cuspidal automorphic representation whose representation at infinity $\Pi_{\infty},$ after twisting by $E_{\mu},$
has nontrivial relative Lie algebra cohomology. In this situation, we write $\Pi \in {\rm Coh}(G, \mu)$. 
\vskip 5pt

One may also  consider cohomology with compact supports $H^\bullet_c(S^G, \E_{\mu})$. {\it Inner cohomology} is defined as the image of compactly supported cohomology in global cohomology: 
$$
H^\bullet_!(S^G, \E_{\mu}) := {\rm Image}
\left(H^\bullet_c(S^G, \E_{\mu}) \longrightarrow H^\bullet(S^G, \E_{\mu})\right). 
$$
In the literature, inner cohomology is also called {\it interior} or {\it parabolic} cohomology. 
It is a fundamental fact (which comes from analyzing the long-exact sequence arising from the Borel-Serre compactification; see, for example, Li--Schwermer \cite{li-schwermer}) that 
\begin{equation}
\label{eqn:cuspidal-inner}
H^\bullet_{\rm cusp}(S^G, \E_{\mu}) \subset H^\bullet_!(S^G, \E_{\mu}). 
\end{equation}
On the other hand, since any compactly supported function is square-integrable, we also have that inner cohomology sits inside the cohomology group whose elements are represented by square-integrable automorphic forms, i.e., 
\begin{equation}
\label{eqn:inner-l2}
 H^\bullet_!(S^G, \E_{\mu})  \subset 
H^\bullet_{(2)}(S^G, \E_{\mu}). 
\end{equation}
\vskip 5pt

Now let us  briefly recall the action of $\autc$ on the various objects introduced above. First, observe that $\autc$ acts naturally on $X(T)$ by
\[  ({^\sigma}\chi)(t)  = \sigma(\chi(\sigma^{-1}(t)), \quad \text{for $\sigma \in \autc$,  $t \in T(\C)$ and $\chi \in X(T)$.} \]
Similarly, $\autc$ acts naturally  on the set of equivalence classes of  irreducible algebraic representations $(E, \rho)$ of $G(\C)$ (where $\rho: G \rightarrow \GL(E)$) by
\[  {}^\sigma\!\rho(g) =  \sigma (\rho(\sigma^{-1}(g)) \quad \text{for $g \in G(\C)$,} \]
where, the $\autc$-action on $\GL(E) \cong \GL_n(\C)$ is with respect to the canonical Chevalley structure over $\Q$.  
In particular, it follows that if $E = E_{\mu}$ has highest weight $\mu$, then ${}^\sigma\! E_{\mu}$ 
has highest weight ${}^\sigma\! \mu$. 
\vskip 5pt

Another description of the $\sigma$-conjugated algebraic representation $({}^\sigma\!E, {}^\sigma\!\rho)$ is as follows. Set $E_{\sigma} = E \otimes_{\C, \sigma} \C$. Then $({}^\sigma\!E, {}^\sigma\!\rho)$ is isomorphic to the representation of $G(\C)$ on $E_{\sigma}$ with $g \in G(\C)$ acting by
\[   v \mapsto  \rho(\sigma^{-1}(g))(v). \] 
 In particular, when restricted to $G(\Q)$, $^{\sigma}E$ is realized  on $E_{\sigma} = E \otimes_{\C,\sigma} \C$ with $G(\Q)$
acting via its action on the first component $E$ of the tensor product. Thus, there is a natural $\sigma$-linear, $G(\Q)$-equivariant map
\[  E \longrightarrow {}^\sigma\!E. \]
\vskip 5pt

In an analogous way, $\autc$ acts naturally on the smooth representations $(W, \Pi_f)$ of $G(\A_f)$. Namely, we define ${^\sigma}\Pi_f$ to be the action of $G(\A_f)$ on $W \otimes_{\C, \sigma} \C$  
with $G(\A_f)$ acting on the first component $W$ of the tensor product. 
\vskip 5pt

Now, passing to sheaves on the locally symmetric space $S^G$, the above considerations lead to 
a commutative diagram, where the horizontal arrows are  $\sigma$-linear $G(\A_f)$-equivariant isomorphisms
$$
\xymatrix{
H^{\bullet}(S^G, \E_{\mu}) 
\ar[rr] & &
H^{\bullet}(S^G, {}^\sigma\!\E_{\mu})  \\
H^{\bullet}_{!}(S^G, \E_{\mu}) \ar@{^{(}->}[u]
\ar[rr] 
& & 
H^{\bullet}_{!}(S^G, {}^\sigma\!\E_{\mu}) \ar@{^{(}->}[u]
}$$

Thus we have the following 
\vskip 5pt

\begin{prop} \label{prop:generalG}
Suppose that $\pi$ is a cohomological cuspidal automorphic representation of $G(\A)$. Then for any $\sigma \in \autc$, there exists an automorphic representation $\tau_{\sigma}$ appearing in the automorphic discrete spectrum of $G(\A)$ such that:
\begin{itemize}
\item $ \tau_{\sigma,f}= {}^\sigma\pi_f$;
\item $\tau_{\sigma, \infty}$ has nonzero Lie algebra cohomology with respect to ${}^\sigma \mu$.
\end{itemize}
\end{prop}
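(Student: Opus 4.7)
The plan is to transfer the cohomological appearance of $\pi_f$ through the chain of inclusions
\[ H^\bullet_{\rm cusp}(S^G, \E_\mu) \subset H^\bullet_!(S^G, \E_\mu) \subset H^\bullet_{(2)}(S^G, \E_\mu) \]
recorded in (\ref{eqn:cuspidal-inner}) and (\ref{eqn:inner-l2}), apply the Galois-equivariance diagram for inner cohomology stated just above the proposition, and then read off $\tau_\sigma$ from the spectral decomposition of $L^2$-cohomology. The point is that cuspidal cohomology is not known to be $\autc$-stable for a general $G$, but inner cohomology is, since it is defined as the image of compactly supported cohomology in full sheaf cohomology, and both sides admit the desired $\sigma$-linear isomorphism.

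First, because $\pi \in {\rm Coh}(G,\mu)$, the decomposition (2.1) together with (\ref{eqn:cuspidal-inner}) implies that $\pi_f$ occurs as a $G(\A_f)$-subquotient of $H^\bullet_!(S^G, \E_\mu)$, so the $\pi_f$-isotypic subspace of inner cohomology is nonzero. Next, the $\sigma$-linear $G(\A_f)$-equivariant isomorphism
\[ H^\bullet_!(S^G, \E_\mu) \ \xrightarrow{\sim}\ H^\bullet_!(S^G, {}^\sigma\!\E_\mu) \]
from the commutative diagram above the proposition carries the $\pi_f$-isotypic subspace onto a $G(\A_f)$-stable subspace of the target isomorphic, via the base-change $M \mapsto M \otimes_{\C,\sigma} \C$, to ${}^\sigma\pi_f$ tensored with its multiplicity space. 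Consequently, the ${}^\sigma\pi_f$-isotypic subspace of $H^\bullet_!(S^G, {}^\sigma\!\E_\mu)$ is nonzero, and by (\ref{eqn:inner-l2}) the same holds in $H^\bullet_{(2)}(S^G, {}^\sigma\!\E_\mu)$.

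Finally, invoke the Borel--Garland decomposition of $L^2$-cohomology:
\[ H^\bullet_{(2)}(S^G, {}^\sigma\!\E_\mu) \ =\ \bigoplus_{\tau} m(\tau)\cdot H^\bullet(\g_\infty, K_\infty^\circ;\,\tau_\infty \otimes {}^\sigma\!E_\mu) \otimes \tau_f, \]
where $\tau$ runs over the automorphic discrete spectrum of $G(\A)$ and $m(\tau)$ is its multiplicity. Since the ${}^\sigma\pi_f$-isotypic subspace on the left is nonzero, at least one summand $\tau_\sigma$ on the right must satisfy $\tau_{\sigma,f} \cong {}^\sigma\pi_f$ and $H^\bullet(\g_\infty, K_\infty^\circ; \tau_{\sigma,\infty} \otimes {}^\sigma\!E_\mu) \neq 0$. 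This is precisely the representation claimed, noting that the highest weight of ${}^\sigma\!E_\mu$ is ${}^\sigma\!\mu$.

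The main conceptual obstacle is the one that forces the conclusion to produce a merely discrete (rather than cuspidal) automorphic representation: the $\sigma$-action is only guaranteed to preserve inner cohomology, not cuspidal cohomology, so one must pass through $H^\bullet_{(2)}$ and is only entitled to locate $\tau_\sigma$ in the discrete spectrum. Promoting $\tau_\sigma$ to a cuspidal representation is a genuinely additional problem, handled for $\GL_n$ by Clozel's theorem and extended to classical groups in Theorem \ref{thm:classical} of the paper.
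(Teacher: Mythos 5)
Your argument is correct and coincides with the paper's proof: both pass from cuspidal cohomology to inner cohomology via (\ref{eqn:cuspidal-inner}), apply the $\sigma$-linear $G(\A_f)$-equivariant isomorphism on inner cohomology, push into $L^2$-cohomology via (\ref{eqn:inner-l2}), and then read off $\tau_\sigma$ from the discrete-spectrum decomposition of $H^\bullet_{(2)}$. You merely make explicit the final step (the Borel--Garland decomposition) that the paper leaves implicit in the phrase ``this proves the proposition,'' and you correctly identify the obstruction to upgrading $\tau_\sigma$ to a cuspidal representation.
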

\begin{proof}
 By assumption, $\pi_f$ occurrs as a $G(\A_f)$-summand in $H^\bullet_{\rm cusp}(S^G, \E_{\mu})$ for some $\E_{\mu}$. We deduce by (\ref{eqn:cuspidal-inner}), (\ref{eqn:inner-l2}) and the above commutative diagram  that 
  ${}^\sigma\pi_f$ occurs in $H^\bullet_!(S^G, {^\sigma}\E_{\mu})$ and hence in $H^\bullet_{(2)}(S^G,  {^\sigma}\E_{\mu})$.
  This proves the proposition.
  \end{proof}
 \vskip 5pt

Let us now suppose that $G$ is a connected reductive group defined over a number field $F$ with ring of adeles $\A_F$ and let $T \subset G$ be a maximal torus defined over $F$. 
We may apply the above discussion to the reductive group  $G_0 :=  {\rm Res}_{F/\Q} G$ over $\Q$ containing the torus $T_0 = {\rm Res}_{F/\Q} T$.  In this case, one has
\[  G_0 \times_{\Q} \C \ \cong \prod_{\tau \in \Hom(F,\C)} G_{\tau} \qquad \text{with} \qquad G_{\tau} := G \times_{F,\tau} \C \]
and
\[  T_0 \times_{\Q} \C \ \cong \prod_{\tau \in \Hom(F,\C)} T_{\tau} \qquad \text{with} \qquad  T_{\tau}:= T \times_{F,\tau} \C, \]
so that
\[  X(T_0) \ \cong \bigoplus_{\tau \in \Hom(F,\C)} X(T_{\tau}), \quad \text{with}\qquad 
X(T_{\tau}) =  \Hom_{\C}( T_{\tau}, \mathbb{G}_m). \]
Note that $X(T_{\tau})$ comes equipped with a natural action of ${\rm Aut}(\C/\tau(F))$. 
Thus, an irreducible algebraic representation $E$ of $G_0(\C)$ is of the form $E = \bigotimes_{\tau} E_{\mu_\tau}$ where $E_{\mu_{\tau}}$ is an irreducible algebraic representation of $G_{\tau}$. 
\vskip 5pt

Let us explicate the action of $\autc$ on $X(T_0)$. For $\sigma \in \autc$ and $\tau \in \Hom(F,\C)$, with $\tau' = \sigma \circ \tau$, the automorphism $\sigma$ induces:
\begin{itemize}
\item[(a)]  a natural isomorphism
\[  {\rm Aut}(\C/\tau(F)) \longrightarrow {\rm Aut}(\C/\tau'(F)) \]
sending $\phi$ to $\sigma \circ \phi \circ \sigma^{-1}$;
\item[(b)]  a natural equivariant isomorphism 
\[  \sigma_*: X(T_{\tau}) \longrightarrow X(T_{\tau'}) \]
given by:
\[  \sigma_*: \chi \mapsto  \sigma \circ  \chi \circ p_{\sigma}, \]
where 
\[ p_{\sigma} :  T_{\tau'} = T_{\tau}  \times_{\C,\sigma} \C \longrightarrow T_{\tau} \]
is the natural projection. Here,  the equivariance of $\sigma_*$ is with respect to the action of ${\rm Aut}(\C/\tau(F))$ on the source  and the action of ${\rm Aut}(\C/\tau'(F))$ on the target, via the isomorphism in (a). 
\end{itemize}
\vskip 10pt

Then the action of $\sigma \in \autc$ on $X(T_0)$ is via the bijections $\sigma_*$ described above. In particular, for any fixed $\tau \in \Hom(F,\C)$, 
\[  X(T_0) \cong {\rm Ind}^{\autc}_{{\rm Aut}(\C/\tau(F))} X(T_{\tau})\]
as $\autc$-modules.
\vskip 10pt
 
 As examples, consider the following cases:
 \vskip 5pt
 
 \begin{itemize}
 \item when $G$ is $F$-split, the action of ${\rm Aut}(\C/\tau(F))$ on $X(T_{\tau})$ is trivial, so that $\sigma_* : X(T_{\tau}) \longrightarrow X(T_{ \tau'})$ is independent of $\sigma$ (subject to $\sigma \circ \tau = \tau'$). In other words, one has a canonical identification $X(T_{\tau}) \leftrightarrow X(T_{\tau'})$. Thus, in this case, the action of $\autc$ on $X(T_0)$ is via the permutation of the components $X(T_{\tau})$, so that if $E = \bigotimes_{\tau} E_{\mu_{\tau}}$, then
 \[  {}^\sigma\!E  = \bigotimes_{\tau} E_{\mu_{\sigma^{-1} \circ \tau}}. \]
\vskip 5pt

\item when $G$ is an inner form of a split group over $F$, the action of ${\rm Aut}(\C/\tau(F))$ on $X(T_{\tau})$ factors through the action of the Weyl group $W_{\tau} := W(G_{\tau}, T_{\tau})$. Thus, one still has a canonical bijection $X(T_{\tau})/ W_{\tau} \leftrightarrow X(T_{\tau'})/ W_{\tau'}$, i.e., between the sets of highest weights. As in the split case, one still has
 \[  {}^\sigma\!E  = \bigotimes_{\tau} E_{\mu_{\sigma^{-1} \circ \tau}}. \]
 \end{itemize}
 \vskip 10pt
 
Finally,  note that the set $S_{\infty}$ of infinite places of $F$ is the set of  orbits of complex conjugation on $\Hom(F,\C)$. If we write $G_{\infty} = {\rm Res}_{F/\Q}(G)(\R) = \prod_{v \in S_{\infty}} G_v$, then $\mu \in X(T_0)$ defines a finite dimensional representation $E = \otimes_v E_{\mu_v}$ via: 
 \begin{itemize}
 \item if $v =\tau \in \Hom(F, \R)$ is real, then $E_{\mu_v} = E_{\mu_{\tau}}$;
 \item if $v = \{ \tau, \overline{\tau} \} \subset \Hom(F,\C)$ is complex, then 
 $E_{\mu_v} =  E_{\mu_{\tau}} \otimes E_{\mu_{\overline{\tau}}}$ as a representation of 
 $G_v  = \{ (g, \overline{g}): g \in G_{\tau} \} \subset G_{\tau}  \times G_{\overline{\tau}}\}$. 
 \end{itemize}
 
 \vskip 5pt

 \section{Cohomological Representations of $\GL_n$} 
 \label{sec:GL(N)}
 In this section, we discuss in greater depth the case when 
 $G = \GL_n/F$ and  recall  a fundamental result due to 
Clozel~\cite[Th\'eor\`eme 3.13]{clozel}, which refines Proposition \ref{prop:generalG}.

\begin{thm}[Clozel] 
\label{thm:clozel}
Let $\Pi$ be a cuspidal automorphic representation of $\GL_n(\A_F)$ such that $\Pi \in {\rm Coh}(\GL_n/F, \mu)$. 
For any $\sigma \in \autc,$ there is a cuspidal automorphic representation 
${}^{\sigma}\Pi \in {\rm Coh}(\GL_n/F, {}^\sigma \mu)$ whose finite part is
${}^\sigma\Pi_f$. 
 \end{thm}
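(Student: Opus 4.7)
The plan is to upgrade Proposition~\ref{prop:generalG} from the discrete automorphic spectrum to the cuspidal spectrum in the special case $G = \GL_n$, by combining the Moeglin--Waldspurger classification of the discrete spectrum of $\GL_n$ with the global genericity of its cuspidal representations.  Applying Proposition~\ref{prop:generalG} to $G_0 = {\rm Res}_{F/\Q}\GL_n$ produces a representation $\tau_\sigma$ occurring in the discrete automorphic spectrum of $\GL_n(\A_F)$ with $\tau_{\sigma,f} \simeq {}^\sigma\Pi_f$ and with $\tau_{\sigma,\infty}$ cohomological with respect to $E_{{}^\sigma \mu}$.  Both required properties of ${}^\sigma\Pi$ will then be satisfied by setting ${}^\sigma\Pi := \tau_\sigma$, provided we can show that $\tau_\sigma$ is actually cuspidal.

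To rule out the residual alternative, I would invoke the Moeglin--Waldspurger theorem: any non-cuspidal summand of the discrete spectrum of $\GL_n(\A_F)$ is a Speh-type residual representation $J(\rho,d)$ for some divisor $d>1$ of $n$ and some cuspidal $\rho$ on $\GL_{n/d}(\A_F)$.  At every finite place $v$ the local component of such a residual is a Langlands quotient of a reducibly-induced representation of $\GL_n(F_v)$, and by the Zelevinsky classification it fails to be generic.  On the other hand, since $\Pi$ is cuspidal on $\GL_n$, Shalika's global genericity theorem yields that $\Pi_v$ is generic for every place $v$.  The local observation in the introduction now implies that the conjugated local representation ${}^\sigma\Pi_v$ is also generic at every finite place $v$: a Whittaker functional on $\Pi_v$ against $\psi_v$ gives rise, via the $\sigma$-linear identification of the underlying space, to a Whittaker functional on ${}^\sigma\Pi_v$ against ${}^\sigma\psi_v$.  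Since $\tau_{\sigma,f} \simeq {}^\sigma\Pi_f$, this contradicts the non-generic nature of Speh residuals at finite places, and so $\tau_\sigma$ must be cuspidal.

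The bulk of the work is thus pushed into Proposition~\ref{prop:generalG}, which handled the general reductive group via cohomological considerations, and into two standard global inputs (Moeglin--Waldspurger and Shalika's theorem on global Whittaker models for cuspidal $\GL_n$).  Note that the archimedean component alone does not rule out residuals, since the trivial representation and other Speh-type modules are themselves cohomological; the genericity argument at finite places is what forces cuspidality.  The one subtlety I would take care over is the compatibility of the $\autc$-action with local Whittaker models at finite places; this is really just the local observation reproduced in the introduction, but it is the hinge of the argument, since it is what lets us transport genericity of $\Pi_v$ to genericity of $\tau_{\sigma,v}$ at every finite $v$.
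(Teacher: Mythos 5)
Your proof is correct, and it is essentially the argument one finds in the literature for this result (the paper itself does not reprove Theorem~\ref{thm:clozel} but simply cites Clozel; it only remarks that the extra content beyond Proposition~\ref{prop:generalG} is the cuspidality of $\tau_\sigma$, which is exactly what you supply). The ingredients you use --- Proposition~\ref{prop:generalG} to land $\tau_\sigma$ in the discrete spectrum with the right finite part and archimedean infinitesimal character, the Moeglin--Waldspurger description of the residual spectrum as Speh-type $J(\rho,d)$ with $d>1$, the fact that at each finite place the local component of such a residual is a proper Langlands quotient and hence non-generic, the global genericity of cuspidal automorphic representations of $\GL_n$ via the Fourier expansion, and the transport of local Whittaker functionals under $\sigma$-twisting --- are exactly the ones that drive Clozel's Th\'eor\`eme~3.13. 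Two points are worth flagging but do not affect correctness: first, you only need a single finite place where ${}^\sigma\Pi_v$ is generic, since a Speh local component is non-generic at \emph{every} place, so your ``all finite places'' is a slight overkill; second, and more importantly, you are right to insist on using the finite places rather than the archimedean ones, since the archimedean component alone cannot distinguish a Speh residual from a cuspidal representation (Speh modules are cohomological), and it is precisely the $\sigma$-linear transport of genericity on $\Pi_f$ that closes the argument.
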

 Thus, the extra information contained here is that the representation $\tau_{\sigma}$ in Proposition \ref{prop:generalG} is cuspidal.
For the precursor to this result of Clozel,    see 
Shimura~\cite[Section 2]{shimura-duke} for  the classical situation of Hilbert modular forms; for representations of $\GL_2$, see Harder~\cite{harder-general} and Waldspurger~\cite{waldspurger}. 
We also remark that in \cite{clozel}, Clozel works with the notion of the ``infinity type $p(\Pi)$"  of a cohomological cuspidal representation $\Pi$, which consists of the exponents appearing in the Langlands parameter of $\Pi_v$ for $v \in S_{\infty}$.  The infinity type $p(\Pi) = \{ p_{\tau}: \tau \in \Hom(F,\C) \}$ is basically the infinitesimal character of  the finite-dimensional representation $E _\mu = \otimes_{\tau \in \Hom(F,\C)}  E_{\mu_{\tau}}$ with  highest weight $\mu$. More precisely, if we assume that $\mu$ is dominant, then for each $\tau \in \Hom(F,\C)$, 
\[  \mu_{\tau} + \rho_n  =   p_{\tau} + ( \frac{n-1}{2},.....,\frac{n-1}{2}), \]
where $\rho_n = (\frac{n-1}{2}, \frac{n-3}{2}, \cdots, -\frac{n-1}{2})$ is the usual half sum of positive roots for $\GL_n$. 
We also refer the reader to \cite[Defn.\,3.6 on p.\,107]{clozel} where the action of $\sigma \in \autc$ on the infinity type  $p(\Pi)$ is defined; it agrees with the action of $\sigma$ on $\mu$ explicated in the previous section.  
\vskip 5pt

Note that in Theorem \ref{thm:clozel}, the archimedean component of ${}^\sigma \Pi$ is not precisely specified: one only knows the finite dimensional representation $E_{{}^\sigma\mu_v}$ with respect to which ${}^\sigma \Pi_v$ has nonzero relative Lie algebra cohomology. 
Thus, one is naturally led to ask: to what extent does  the highest weight $\mu_v$ determine the cohomological representation $\Pi_v$? Let us examine this issue more closely for $\GL_n$.
\vskip 5pt

Assume first that $v = \{ \tau, \overline{\tau} \}$ is a complex  place. Then suppose that 
\[  \mu_{\tau} + \rho_n  = (a_1, a_2,...,a_n) \quad \text{and} \quad \mu_{\overline{\tau}}+\rho_n   = (b_1, b_2,..., b_n), \]
with $a_i$ and $b_i$ decreasing.
By the purity lemma of Clozel \cite[Lemma 4.9]{clozel}, there is a ${\sf w} \in \Z$ (independent of $v$) such that $a_i + b_{n+1-i} = {\sf w}$ for all $i$.  Then 
\[  \Pi_v = {\rm Ind}_B^{\GL_n(\C)} z^{a_1} \overline{z}^{b_n} \times \cdots \times z^{a_n} \overline{z}^{b_1}. \]
In other words, $\Pi_v$ is completely determined by $\mu_v = \{ \mu_{\tau}, \mu_{\overline{\tau}} \}$ if $v$ is complex. 

\vskip 5pt
 Assume now that $v$ is a real place. Then
$\mu_v = (\mu_{v,1}, \dots, \mu_{v,n})$ where $\mu_{v,j} \in \Z$ and $\mu_{v,1} \geq \mu_{v,2} \geq \cdots \geq 
\mu_{v,n}.$  Further, one knows that the highest weight $\mu$ is  pure (because only pure weights support nonzero cuspidal cohomology); namely  there exists a ${\sf w} \in \Z$ (independent of $v$) such that 
\[ \mu_{v,j} + \mu_{v,n-j+1} = {\sf w}. \] 
Note that if $n$ is odd, then  ${\sf w} = 2\mu_{v,(n-1)/2}$ is even. Now put $\ell = (\ell_v)_{v \in S_\infty}$ where 
\[  \ell_v = 2\mu_v - {\sf w} + 2\rho_{n} \]
 Then 
 \[  \ell_{v,j} \ = \ 2 \mu_{v,j} - {\sf w} + n-2j+1 \ = \ \mu_{v,j} - \mu_{v, n-j+1} + n-2j+1. \]
Observe that: 
\[ \begin{cases}
  \ell_{v,1} > \dots > \ell_{v, [n/2]} > 0; \\
  \ell_{v,n-j+1} = -\ell_{v,j}; \\
   \ell_{v,j} \equiv {\sf w} + n -1 \pmod{2}. 
   \end{cases} \]
In particular, $\ell_{v,j} \equiv 0 \pmod{2}$ if $n$ is odd. 
\vskip 5pt

We define an irreducible representation $J_{\mu_v}$ as 
the representation induced from the $(2,\dots,2)$-parabolic if $n=2m$ is even: 
\begin{equation}
\label{eqn:j-mu-even}
J_{\mu_v} = D_{\ell_{v,1}}|\ |^{-{\sf w}/2} \times \dots \times D_{\ell_{v,m}}|\ |^{-{\sf w}/2}, 
\end{equation}
and if $n = 2m+1$ is odd then it is induced from the $(2,\dots,2,1)$-parabolic subgroup: 
\begin{equation}
\label{eqn:j-mu-odd}
J_{\mu_v} = D_{\ell_{v,1}}|\ |^{-{\sf w}/2} \times \dots \times D_{\ell_{v,m}}|\ |^{-{\sf w}/2} \times |\ |^{-{\sf w}/2}, 
\end{equation}
where $D_l$ is the `discrete series' representation of $\GL_2(\R)$ of lowest weight $l+1$ and 
central character ${\rm sgn}^{l+1}.$ Given $\Pi \in {\rm Coh}(\GL_n/F, \mu)$, we know, when $n$ is even, that 
\begin{equation}
\label{eqn:pi-infinity-even}
\Pi_v \ \simeq \ J_{\mu_v}, 
\end{equation}
and when $n$ is odd, we know that
\begin{equation}
\label{eqn:pi-infinity-odd}
\Pi_v \ \simeq \ J_{\mu_v} \otimes {\rm sgn}^{\epsilon(\Pi_v)}, 
\end{equation}
where $\epsilon(\Pi_v) \in \{0,1\}$ is defined by 
$$
(-1)^{\epsilon(\Pi_v)} = (-1)^{(n-1)/2}\omega_{\Pi_v}(-1).
$$
(See, for example, \cite[Section 5.1]{raghuram-shahidi}.) 
Thus, when $v$ is real,  $\mu_v$ completely determines $\Pi_v$ when $n$ is even; however, when $n$ is odd, we need not only 
$\mu_v$ but also the parity of the central character at the real place $v$ to pin down $\Pi_v.$ 
\vskip 5pt

Now we bring in the $\autc$-action. As we have noted in the previous section, for $\sigma \in \autc$, we have ${}^\sigma\!\mu_{\tau}= \mu_{\sigma^{-1} \tau}$ for $\tau \in \Hom(F,\C)$. The above discussion implies that when $n$ is even or when $v$ is complex, the local component ${}^\sigma \Pi_v$  is completely determined by ${}^\sigma \mu_v$. We explicate the situation in two cases:
\vskip 5pt

\begin{prop}  \label{prop:arch}
(i)  Assume first that $F$ is a totally complex quadratic extension of a totally real $F^+$. 
Then for any $\sigma \in \autc$,
\[  {}^\sigma \Pi_v = \Pi_{\sigma^{-1} v}. \]
\vskip 5pt

(ii)  Assume that $F$ is totally real.
When $n$ is even, we have:
\begin{equation}
\label{eqn:n-even-aut-c}
{}^\sigma\Pi_\infty \ = \ \bigotimes_{v \in S_\infty} \Pi_{\sigma^{-1} v} \ = \ \bigotimes_{v \in S_\infty} J_{\mu_{\sigma^{-1}v}}.
\end{equation}
When $n$ is odd, we have:
\begin{equation}
{}^\sigma\Pi_\infty \ = \ \bigotimes_{v \in S_\infty}  \left(J_{\mu_{\sigma^{-1}v}} \otimes {\rm sgn}^{\epsilon(\Pi_v)}\right),
\end{equation}
where $\epsilon(\Pi_v)$ is as defined in (\ref{eqn:pi-infinity-odd}). In particular, if the sign $\omega_{\Pi_v}(-1)$ is independent of the infinite place $v$, then 
\[ {}^\sigma\Pi_\infty \ = \ \bigotimes_{v \in S_\infty} \Pi_{\sigma^{-1} v} \ = \ \bigotimes_{v \in S_\infty} J_{\mu_{\sigma^{-1}v}} \]
as in the case when $n$ is even.
\end{prop}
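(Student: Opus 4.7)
\textbf{Proof plan for Proposition \ref{prop:arch}.}
The unifying input is Clozel's Theorem~\ref{thm:clozel}: ${}^\sigma\Pi \in {\rm Coh}(\GL_n/F, {}^\sigma\!\mu)$ for every $\sigma \in \autc$. Since $\GL_n$ is $F$-split, the earlier discussion of the $\autc$-action on $X(T_0)$ says that $\autc$ acts by permuting the factors indexed by $\Hom(F,\C)$; concretely, $({}^\sigma\!\mu)_\tau = \mu_{\sigma^{-1}\tau}$. So for each archimedean place $v$ of $F$ the highest weight of $({}^\sigma\!\Pi)_v$ is obtained from $\mu_{\sigma^{-1}\tau}$ for the relevant embeddings $\tau$, and the task reduces to showing that this highest-weight datum, together with the central-character parity in the odd case, suffices to pin down $({}^\sigma\!\Pi)_v$.

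For part (i), $F$ is CM with CM involution $c_F$. The key algebraic fact is that $c_\C \circ \tau = \tau \circ c_F$ for every embedding $\tau$, and therefore $c_\C \circ (\sigma\tau) = \sigma\tau \circ c_F = \sigma \circ (c_\C \circ \tau) = \sigma \circ \bar\tau$; this is exactly the statement that $\autc$ commutes with conjugation on $\Hom(F,\C)$, so $\sigma$ descends to an action on the set of complex places of $F$. At such a place $v = \{\tau, \bar\tau\}$ one then has $({}^\sigma\!\mu)_v = (\mu_{\sigma^{-1}\tau}, \mu_{\sigma^{-1}\bar\tau}) = \mu_{\sigma^{-1}v}$. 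As recorded in the discussion preceding the proposition, at a complex place the archimedean component of a cohomological representation is \emph{completely} determined by its highest weight (via the explicit induced principal series formula), and so $({}^\sigma\!\Pi)_v = \Pi_{\sigma^{-1}v}$.

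For part (ii) with $n$ even, all infinite places are real and $\autc$ permutes $\Hom(F,\R) = S_\infty$ in the obvious way. The formula (\ref{eqn:pi-infinity-even}) says that $\Pi_v = J_{\mu_v}$ is determined by $\mu_v$, so applying it to ${}^\sigma\!\Pi$ gives $({}^\sigma\!\Pi)_v = J_{{}^\sigma\!\mu_v} = J_{\mu_{\sigma^{-1}v}} = \Pi_{\sigma^{-1}v}$, yielding (\ref{eqn:n-even-aut-c}).

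For part (ii) with $n$ odd, the same argument gives $({}^\sigma\!\Pi)_v = J_{\mu_{\sigma^{-1}v}} \otimes {\rm sgn}^{\epsilon(({}^\sigma\!\Pi)_v)}$, and the remaining point --- the one I expect to be the real content --- is to identify the sign. By definition this sign is determined by the value at $-1$ of the central character $\omega_{({}^\sigma\!\Pi)_v}$. Since $\omega_{{}^\sigma\!\Pi} = {}^\sigma\!\omega_\Pi$ as algebraic Hecke characters of $F$, and since the Galois action on an algebraic Hecke character permutes its archimedean components by the same permutation $\tau \mapsto \sigma \circ \tau$ of embeddings used for $\mu$, one obtains $({}^\sigma\!\omega_\Pi)_v(-1) = \omega_{\Pi, \sigma^{-1}v}(-1)$ (the value at $-1$ being a rational sign, and hence preserved by $\sigma$ itself, the only action being the re-indexing of places). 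Combining with the defining formula $(-1)^{\epsilon} = (-1)^{(n-1)/2} \omega_{\cdot}(-1)$ gives $\epsilon(({}^\sigma\!\Pi)_v) = \epsilon(\Pi_{\sigma^{-1}v})$, which matches the sign on the right-hand side; the final sentence of the proposition is then the special case in which this sign does not depend on $v$. The main technical point to justify carefully is the identification of the infinity-type of ${}^\sigma\!\omega_\Pi$ with the $\sigma$-permutation of that of $\omega_\Pi$, but this is standard for algebraic Hecke characters and is already built into the Clozel setup we are invoking.
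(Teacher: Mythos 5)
Your treatment of part (i) and part (ii) with $n$ even matches the paper's approach. The problem is in part (ii) with $n$ odd, where you go astray precisely at the step you flag as the real content.

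You assert that because $\omega_{{}^\sigma\Pi} = {}^\sigma\omega_\Pi$, ``the Galois action on an algebraic Hecke character permutes its archimedean components by the same permutation $\tau\mapsto\sigma\circ\tau$ of embeddings used for $\mu$,'' and conclude $\epsilon({}^\sigma\Pi_v) = \epsilon(\Pi_{\sigma^{-1}v})$. That is not what the paper's proof establishes, and it is not what the proposition claims: the right-hand side of the $n$-odd display has $\epsilon(\Pi_v)$, not $\epsilon(\Pi_{\sigma^{-1}v})$. The sign component does \emph{not} get permuted. The central character of a cohomological cuspidal representation on a totally real field has the form $\omega_\Pi = \omega_\Pi^{\circ}\otimes|\cdot|^{-n{\sf w}/2}$ with $\omega_\Pi^{\circ}$ of finite order, so there is no infinity-type permutation to speak of (the exponent is constant), and the only archimedean data is the sign $\omega^{\circ}_{\Pi,v}(-1)\in\{\pm1\}$. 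Since $\omega_\Pi^{\circ}$ takes values in roots of unity, one has $^{\sigma}\omega_\Pi^{\circ} = \sigma\circ\omega_\Pi^{\circ}$ globally (by weak approximation, a Hecke character is determined by almost all finite components, and $\sigma\circ\omega_\Pi^{\circ}$ is still a continuous character), and at a real place $v$ this gives $(^{\sigma}\omega_\Pi^{\circ})_v = \sigma\circ\omega_{\Pi,v}^{\circ} = \omega_{\Pi,v}^{\circ}$ because the values lie in $\{\pm1\}$, which $\sigma$ fixes pointwise. Similarly ${}^{\sigma}|\cdot| = |\cdot|$. So $({}^{\sigma}\omega_\Pi)_v = \omega_{\Pi,v}$ with no reindexing, whence $\epsilon({}^{\sigma}\Pi_v) = \epsilon(\Pi_v)$.

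Your formula $\epsilon({}^\sigma\Pi_v) = \epsilon(\Pi_{\sigma^{-1}v})$ agrees with the correct one only when $\epsilon(\Pi_v)$ (equivalently $\omega_{\Pi_v}(-1)$) is independent of $v$, which is exactly the hypothesis of the ``in particular'' sentence. In general it is wrong, and the heuristic ``the Galois action permutes archimedean components'' is the culprit: the abstract $\autc$-action on a Hecke character is $\sigma\circ\chi$ applied to the \emph{values}, not a relabelling of places, and only the algebraic part of the infinity type (which here is constant) is sensitive to the embedding-permutation; the parity signs stay put.
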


\begin{proof}
(i)  For each place $v^+$ of $F^+$, let $v = \{\tau, \overline{\tau} \}$ be the place of $F$ over $v^+$, so that  $\tau$ and $\overline{\tau}$ are the two elements of $\Hom(F,\C)$ whose restriction to $F^+$ is $v^+$. Then for any $\sigma \in \autc$, $\sigma^{-1} \circ v := \{ \sigma^{-1} \circ \tau, \sigma^{-1} \circ \overline{\tau} \}$ are the two elements which restrict to $\sigma^{-1} \circ v^+$. Thus, ${}^\sigma \mu_v = \mu_{\sigma^{-1} \cdot v}$ and so we have:
\[  {}^\sigma \Pi_v = \Pi_{\sigma^{-1} \circ v}. \]
 
 \vskip 5pt
 
(ii)  Now assume that $F$ is totally real, so that $S_{\infty}  = \Hom(F,\C)$. The case when $n$ is even follows from our discussion above.
 When $n$ is odd, the situation is a little more tricky and we need to consider central characters. Let $\omega_\Pi$ be the global central character of $\Pi.$ It is of the form: 
\[ \omega_\Pi = \omega_\Pi^\circ \otimes |\ |^{-n{\sf w}/2} \]
 with  $\omega_\Pi^\circ$ a Hecke character of finite order. Let us simplify notations and write this as: 
$\omega = \omega^\circ |\ |^{\sf m}$ where $\omega^\circ$ is a finite-order Hecke character and ${\sf m} \in \Z.$ Then, 
for any $\sigma \in \autc$, we have:
\begin{equation}
\label{eqn:sigma-omega}
({}^\sigma\omega)_v \ = \ \omega_v, \ \  \forall v \in S_\infty. 
\end{equation}
This follows from the following two observations:
\begin{itemize}
\item $({}^\sigma \omega^{\circ}) = \sigma \circ \omega^{\circ}$; this is because the latter is still a continuous character of $F^{\times} \backslash \A_F^{\times}$ as $\omega^{\circ}$ takes value in a finite group. In particular, for real $v$, $\sigma \circ \omega^{\circ}_v = \omega^{\circ}_v$ since $\omega^{\circ}_v$ takes value in $\{ \pm 1\}$.

\item ${}\sigma | \ | = |\ |$; this is because at all finite places $w$, $| \ |_w$ takes value in $\Q$ and so ${}\sigma|\ |_w = |\ |_w$ (and a Hecke character is determined by almost all its local components, by weak approximation). 
\end{itemize}

\vskip 5pt

Next, the global central character of ${}^\sigma\Pi$ satisfies: 
$$
\omega_{{}^\sigma\Pi} = {}^\sigma\omega_\Pi, 
$$ 
which can be seen by checking equality of local characters at all finite unramified places. Hence the parity that is needed in pinning down the representations at infinity as in (\ref{eqn:pi-infinity-odd}) is given by
$\epsilon({}^\sigma\Pi_v) = \epsilon(\Pi_v),$
since 
\[  (-1)^{\epsilon({}^\sigma\Pi_v)} = (-1)^{(n-1)/2}\omega_{{}^\sigma\Pi_v}(-1) \quad \text{and} \quad 
\omega_{{}^\sigma\Pi_v}(-1) = {}^\sigma\omega_{\Pi_v}(-1) = \omega_{\Pi_v}(-1) \]
 by  (\ref{eqn:sigma-omega}). This proves the proposition.
 \end{proof}

\begin{rem}{\rm 
When $F$ is totally real and $n$ is odd, the hypothesis in the above theorem that the sign  
$\omega_{\Pi_v}(-1)$ is independent of $v$ is arithmetically interesting because it is a necessary condition for 
the standard $L$-function of $\Pi$ to have a critical point. This will also be the case if the rank $n$ Grothendieck motive $M = M_\Pi$ over $F$ that is conjecturally attached to $\Pi$ is {\it special}, i.e., has the property that complex conjugation acts via the `same' scalar on the middle Hodge type for every real embedding $v$ of $F$; see, for example, Blasius~\cite[M3]{blasius}. 
}\end{rem}

\vskip 10pt

\section{The $\GL_2$-examples}
\label{sec:gl2}
After the preliminary discussions of the previous two sections, we can now begin the consideration of periods.  In this section, we illustrate the question we will study for the case of $\GL_2$. Let us, for the sake of simplicity, take $G = \GL_2/\Q$, although everything discussed in this section works for $\GL_2$ over any number field. 

\subsection{Whittaker periods}
\label{sec:gl2-whitakker}

For the subgroup $H$ we take 
$$
H = U_2 = U = \left\{ \left(\begin{array}{ll} 1 & x \\ 0 & 1 \end{array}\right) \ : \ x \in {\mathbb G}_a \right\}, 
$$
i.e., $U$ is the unipotent radical of the standard Borel subgroup of upper triangular matrices in $G.$ 
Fix a nontrivial additive character $\psi : \Q\backslash \A \to \C^\times$. Then, as usual, $\psi$ gives a character 
$\psi : U(\Q)\bs U(\A) \to \C^\times$ by $\psi \left(\begin{smallmatrix}1 & x \\ 0 & 1\end{smallmatrix}\right) = \psi(x).$ 
Using the same symbol $\psi$ for both the characters will cause no confusion. 
In this situation, the linear functional $\ell_\psi$ defined in (\ref{eqn:l-chi}) is called a global Whittaker functional. 

Given a cuspidal automorphic representation $(\pi,V_\pi)$ of $\GL_2(\A)$ we can define for each $\phi \in V_\pi$ the associated Whittaker vector 
\begin{equation}
\label{eqn:w-phi}
W_\phi(g) \ := \ \int_{U(\Q)\bs U(\A)} \phi(ug) \psi(u)^{-1}\, du. 
\end{equation}
Observe that $W_\phi(1) = \ell_\psi(\phi)$. Using the action of $\GL_2(\A)$ we see that $\ell_\psi(\phi) \neq 0$ for some 
$\phi$ if and only if $W_\phi \neq 0$ for some $\phi$. 
A fundamental fact at the heart of the $\GL_2$-theory of automorphic forms is
that $W_\phi$ determines $\phi$. (See, for example, \cite[Lecture 4, Section 1]{cogdell}.) Indeed,  we have a Fourier expansion of the form
\begin{equation}
\label{eqn:fourier}
\phi(g) \ = \ 
\sum_{\gamma \in \Q^\times}  
W_\phi \left(\left(\begin{array}{ll} \gamma & 0 \\ 0 & 1 \end{array}\right) g\right), 
\end{equation}
In particular, every cuspidal automorphic representation has a nonvanishing Whittaker period. 

Now let us suppose that $\pi$ is a cohomological cuspidal automorphic representation of $\GL_2(\A)$, and in particular, 
$\pi$ has nonvanishing Whittaker periods. For any 
$\sigma \in \autc$ Theorem~\ref{thm:clozel} says that ${}^\sigma\pi$ is also a (cohomological) cuspidal automorphic representation of $\GL_2(\A)$. Hence, by the above discussion once again, ${}^\sigma\pi$ also has nonvanishing Whittaker periods, i.e, we have arithmeticity for Whittaker periods for $\GL_2.$ 

The main ingredients in arithmeticity for Whittaker periods are (\ref{eqn:fourier}) and Theorem~\ref{thm:clozel}. Both these ingredients, which are nontrivial assertions, are valid for $\GL_n/F$ over any number field $F$ after suitable modification;  for example, the Fourier expansion takes the form: 
\begin{equation}
\label{eqn:fourier-gln}
\phi(g) \ = \ 
\sum_{\gamma \in \GL_{n-1}(F)/U_{n-1}(F)}  
W_\phi \left(\left(\begin{array}{ll} \gamma & 0 \\ 0 & 1 \end{array}\right) g\right), 
\end{equation}
(See, for example, \cite[loc.\,cit.]{cogdell}.) Hence we get arithmeticity for Whittaker periods for $\GL_n/F$. In Section~\ref{sec:whittaker-classical} we consider the case of classical groups, especially split ${\rm SO}(2n+1)$, where the analysis is far more complicated. 

\medskip

Reverting to $\GL_2/\Q$, let us go through the analysis for arithmeticity for Whittaker periods in the classical context of modular forms. Fix a positive  integer $N$ and consider the space $S_k(N)$ consisting of all 
holomoprhic cusp forms of weight $k$ on the upper half plane for the discrete subgroup 
$\Gamma_1(N)$ of $\SL_2(\R).$ A cusp form $\varphi \in S_k(N)$ has a Fourier expansion 
$$
\varphi(z) = \sum_{n=1}^\infty a_n(\varphi) e^{2 \pi inz}.
$$
Now define $S_k(N,\Q)$ to be the $\Q$-subspace of the $\C$-vector space $S_k(N)$ consisting of all $\varphi$ such that 
$a_n(\varphi) \in \Q$ for all $n \geq 1.$ One has the following {\it nontrivial} fact: 
\begin{equation}
\label{eqn:skn}
S_k(N) \ = \ S_k(N,\Q) \otimes_\Q \C.
\end{equation}
(See, for example, Shimura \cite[Theorem 3.52]{shimura-book}.) This may be stated as the fact that the space of cusp forms of weight $k$ and level $N$ has a basis of cusp forms all of whose Fourier coefficients are in $\Q.$ Indeed, there is a deeper integrality statement which says that the above is true with $\Z$ instead of $\Q$; however, for our purposes, a $\Q$-basis is sufficient. 
Let us note that (\ref{eqn:skn}) is the classical analogue of the statement (see Clozel~\cite[Th\'eor\`eme 3.19]{clozel}) that cuspidal cohomology for $\GL_n/F$ admits a suitable rational structure. 
Now, given $\varphi \in S_k(N)$ and $\sigma \in \autc$ we can define a function ${}^\sigma\varphi$ via a $q$-expansion. 
$$
{}^\sigma\varphi(z) \ := \ \sum_{n=1}^\infty  \sigma(a_n(\varphi)) e^{2 \pi inz}. 
$$
It follows from (\ref{eqn:skn}) that 
${}^\sigma\varphi \in S_k(N).$ This is the classical analogue of Theorem~\ref{thm:clozel}. Arithmeticity for Whittaker models takes the form: 
$$
a_n(\varphi) \neq 0 \  \Longrightarrow \ a_n({}^\sigma\varphi) \neq 0, 
$$
which is built into the definition of the Galois conjugate ${}^\sigma\varphi.$ The depth of the phenomenon lies in the rationality statement in (\ref{eqn:skn}).

\subsection{$\GL_1$-periods}
\label{sec:gl2-gl1}

We continue with $G = \GL_2/\Q$ and now we take 
$$
H = \left\{ \left(\begin{array}{ll} x & 0 \\ 0 & 1 \end{array}\right) \ : \ x \in {\mathbb G}_m \right\} \simeq \GL_1 . 
$$
Take a Hecke character $\chi : \Q^\times\bs\A^\times \to \C^\times$, which gives a character 
$\chi : H(\Q) \bs H(\A) \to \C^\times$ by 
$\chi\left(\begin{smallmatrix} x & 0 \\ 0 & 1 \end{smallmatrix} \right) = \chi(x).$ 
Using the same symbol $\chi$ for both the characters will cause no confusion.
Consider a cuspidal automorphic representation $\pi$ of $\GL_2(\A)$. Suppose there is a $\phi \in V_\pi$ such that 
$$
\ell_\chi(\phi)  \ = \ 
\int_{x \in  \Q^\times\bs\A^\times} \phi \left(\left(\begin{array}{ll} x & 0 \\ 0 & 1 \end{array}\right)\right) 
\chi(x) \, dx \ 
\neq \ 0. 
$$
To  analyze these integrals, and to relate them to $L$-values, following Jacquet-Langlands \cite{jacquet-langlands}, fix a nontrivial additive character $\psi$ as in the previous subsection and 
consider the Whittaker model $\W(\pi) = \W(\pi,\psi)$ of $\pi.$ Let 
the cusp form $\phi$ correspond to $W_\phi \in \W(\pi)$ where $W_\phi$ is defined in (\ref{eqn:w-phi}). Then for a complex variable $s$ such that $\Re(s) \gg 0,$ the classical unfolding argument gives:
\begin{eqnarray*}
\ell(s, \phi, \chi) & := & \int_{x \in  \Q^\times\bs\A^\times} \phi \left(\left(\begin{array}{ll} x & 0 \\ 0 & 1 \end{array}\right)\right) 
\chi(x) |x|^{s-\tfrac12} \, dx \\
& = &  
\int_{x \in  \A^\times} W_\phi \left(\left(\begin{array}{ll} x & 0 \\ 0 & 1 \end{array}\right)\right) 
\chi(x) |x|^{s-\tfrac12} \, dx.
\end{eqnarray*}
Denote the {\it zeta integral} on the right hand side by $Z(s, W_\phi, \chi), $ and note that all the ingredients in that integral are  factorizable. Changing notation if necessary, there is a cusp form $\phi$ so that $\ell_\chi(\phi) \neq 0$ and the associated Whittaker vector $W_\phi$ is a pure-tensor 
$W_\phi = \otimes W_p.$ Outside a finite set of primes $S$ containing the infinite prime and all the primes where $\pi$ or $\chi$ is ramified, one knows that $W_p$ is the spherical vector normalized so that $W_p(1) = 1$, and the local zeta-integral computes the local $L$-function: 
$$
Z(s, W_p, \chi_p) \ = \ \int_{x_p \in \Q_p^\times} W_p \left(\left(\begin{array}{ll} x_p & 0 \\ 0 & 1 \end{array}\right)\right) 
\chi_p(x_p) |x_p|_p^{s-\tfrac12} \, dx_p \ = \ L(s, \pi_p \otimes \chi_p). 
$$
(See, for example, Gelbart~\cite[Prop.\ 6.17]{gelbart}.)
Let $L^S(s,\pi \otimes \chi) := \prod_{p \notin S} L_p(s, \pi_p \otimes \chi_p)$ denote the partial $L$-function. So far we have: 
$$
\ell(s,\phi,\chi) = Z(s,W_\phi, \chi) = \left(\prod_{p \in S}Z(s,W_p, \chi_p) \right) \cdot L^S(s, \pi \otimes \chi).
$$
Now multiply and divide the right hand side by the local $L$-factors at $p \in S$ to get: 
\begin{equation}
\label{eqn:gl2-zeta}
\ell(s,\phi,\chi) = \left(\prod_{p \in S} \frac{Z(s,W_p, \chi_p)}{L_p(s,\pi_p \otimes \chi_p)} \right) \cdot L(s, \pi \otimes \chi).
\end{equation}
The integral $\ell(s,\phi,\chi)$ converges for all $s$ since $\phi$ is rapidly decreasing. On the right hand side, one knows from Jacquet-Langlands that each of the ratios $Z(s,W_p, \chi_p)/L_p(s,\pi_p \otimes \chi_p)$, {\it a priori} defined only for $\Re(s) \gg 0$, in fact have an analytic continuation to all of $s$ (see, for example, \cite[Theorem 6.12 (ii)]{gelbart}), 
and that the completed $L$-function $L(s, \pi \otimes \chi)$ is an entire function of $s$ 
(see, for example, \cite[Theorem 6.18]{gelbart}). 
We can now prove the following characterization of the existence of $(\GL_1,\chi)$-periods and 
nonvanishing of a certain $L$-value: 

\begin{prop}
\label{prop:gl2-characterize}
Let $\pi$ be a cuspidal automorphic representation of $\GL_2(\A)$, and $\chi$ a Hecke character of $\Q$. Then, the following are equivalent: 
\begin{enumerate}
\item There exists a cusp form $\phi \in V_\pi$ such that $\ell_\chi(\phi) \neq 0.$
\item $L(\tfrac12, \pi \otimes \chi) \neq 0.$
\end{enumerate}
\end{prop}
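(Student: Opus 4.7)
The plan is to specialize the identity (4.7), which is already derived in the excerpt, at the central point $s = 1/2$, and then to exploit the local Jacquet--Langlands theory of zeta integrals to pass between nonvanishing of the period $\ell_\chi$ and nonvanishing of the central $L$-value. The key structural fact is that for a pure tensor $\phi$ with factorizable Whittaker function $W_\phi = \otimes_p W_p$, one has
$$\ell_\chi(\phi) \;=\; \ell(1/2, \phi, \chi) \;=\; \left(\prod_{p \in S} \frac{Z(1/2, W_p, \chi_p)}{L_p(1/2, \pi_p \otimes \chi_p)}\right) \cdot L(1/2, \pi \otimes \chi),$$
and the completed $L$-function on the right is already known to be entire.

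For the direction (1) $\Rightarrow$ (2), I would use that $\ell_\chi$ is a linear functional on $V_\pi$, which is a restricted tensor product of local representations. Pure tensors span a dense subspace, so if $\ell_\chi$ is nonzero on $V_\pi$ then it is nonzero on some $\phi$ with factorizable $W_\phi$. Plugging this $\phi$ into the displayed identity, the left side is nonzero, and so the last factor $L(1/2, \pi \otimes \chi)$ on the right must be nonzero.

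For the direction (2) $\Rightarrow$ (1), the strategy is to build a global Whittaker vector whose local components each make the corresponding ratio nonzero at $s=1/2$. By the Jacquet--Langlands local theory, at each place $p$ the local $L$-factor $L_p(s, \pi_p \otimes \chi_p)$ is a "greatest common divisor" of the local zeta integrals $Z(s, W, \chi_p)$ as $W$ varies in $\W(\pi_p, \psi_p)$; concretely, there exists a $W_p$ with $Z(s, W_p, \chi_p) = L_p(s, \pi_p \otimes \chi_p)$, so the local ratio is identically $1$. At unramified $p \notin S$ the spherical Whittaker function already achieves this, as noted in the excerpt. Assembling these local choices gives a pure tensor $W = \otimes_p W_p$ in the global Whittaker model of $\pi$; the Fourier expansion (4.4) (applied in reverse) recovers a cusp form $\phi \in V_\pi$ with this Whittaker function, and for this $\phi$ the displayed identity collapses to $\ell_\chi(\phi) = L(1/2, \pi \otimes \chi)$, which is nonzero by hypothesis.

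The main obstacle is the archimedean input: one needs to know that the local zeta integrals at the infinite place actually realize the full local $L$-factor, so that a $W_\infty$ exists for which the ratio does not vanish at $s = 1/2$. This is classical for $\GL_2$ (due to Jacquet) but is the most delicate ingredient; at nonarchimedean places the corresponding statement is a standard consequence of the Kirillov model and the theory of local new vectors. Given these local facts, together with the entirety of $L(s, \pi \otimes \chi)$ cited above (which rules out any unfortunate cancellation of poles in the product of ratios against zeros of the $L$-function), the argument proceeds without further subtlety.
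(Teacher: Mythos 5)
Your proposal is correct and follows essentially the same route as the paper: both directions are read off from the factorization identity (\ref{eqn:gl2-zeta}) at $s=\tfrac12$, using that a nonvanishing linear functional must be nonzero on some pure tensor for the forward direction, and assembling local Whittaker vectors that realize nonvanishing local ratios (in fact exactly the local $L$-factor for $\GL_2$) for the converse. The paper deliberately phrases the local input as the weaker nonvanishing-of-the-ratio statement to prepare for the $\GL_n\times\GL_{n-1}$ generalization, but the underlying argument is the one you give.
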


\begin{proof}
For (1) $\Rightarrow$ (2), put $s = 1/2$ in (\ref{eqn:gl2-zeta}) to get:
$$
0 \neq \ell_\chi(\phi) = r \cdot L(\tfrac12, \pi \otimes \chi)
$$
where $r$ is an ad-hoc notation for the product $\prod_{p \in S} Z(\tfrac12,W_p, \chi_p)/L_p(\tfrac12,\pi_p \otimes \chi_p).$ Hence the right hand side is not zero. 

For  (2) $\Rightarrow$ (1), given $L(\tfrac12, \pi \otimes \chi) \neq 0,$ to construct a cusp form $\phi$ with non-vanishing period, we construct the associated Whittaker vector $W_\phi$ as a pure-tensor. Outside a finite set $S$ as above, take $W_p$ to be the normalized spherical vector. For places in $S$, given any $s_0$ (for us $s_0=1/2$), 
we are always guaranteed the existence of a Whittaker vector $W_p$ such that the ratio 
$Z_p(s_0, W_p, \chi_p)/L(s_0, \pi_p \otimes \chi_p) \neq 0.$ See \cite[(6.29)]{gelbart}. ({\bf Note:} Indeed, for $\GL_2$ there is a $W_p$ for each place $p$ so that the local zeta integral computes the local $L$-factor, and so the ratio is in fact $1$.  
We deliberately stated it in a weaker form of just nonvanishing of that ratio as that is the way it will generalize to 
$\GL_n \times \GL_{n-1}$.) Now put all the local Whittaker vectors to get a global Whittaker vector, and take $\phi$ to be the associated cusp form. The proof follows again from (\ref{eqn:gl2-zeta}) at $s=1/2.$ 
\end{proof}

\begin{rem}{\rm Observe that it is possible for $L(\tfrac12, \pi \otimes \chi) \neq 0$ and yet $L_f(\tfrac12, \pi \otimes \chi) = 0.$
(We use $L(s,\dots)$ for the completed $L$-function, and $L_f(s,\dots)$ for the finite part.) Such a phenomenon will happen when the infinite part $L_\infty(s, \pi \otimes \chi)$ has a pole at $s = 1/2.$ Here is an easy example: 
Let $\Delta \in S_{12}(\SL_2(\Z))$ be the Ramanujan $\Delta$-function which is a weight $12$ cusp form of full level. 
Let $\pi := \pi(\Delta) \otimes |\ |^{-6}$ and take $\chi$ to be the trivial character. 
(For us, cuspidal automorphic representations need not be unitary, and indeed, $\pi$ is not unitary.) Anyway, let $L(s,\pi)$ be the Jacquet-Langlands $L$-function, and $L(s,\Delta)$ be the classical Hecke $L$-function; then $L(s, \pi) = L(s-6, \pi(\Delta)) = L(s-1/2, \Delta).$ Using the classical functional equation $L(s, \Delta) = L(12-s, \Delta)$ we get
$$
L(\tfrac12, \pi)  \ = \ L(0,\Delta) \ = \ L(12, \Delta)  \ \neq \ 0, 
$$
The $L$-factor at infinity is given by: 
$$
L_\infty(s, \pi_\infty) \ = \ L_\infty(s -6, \pi(\Delta)_\infty) \ = \  2 \,(2\bfgreek{pi})^{-s+\tfrac12}\, \Gamma\left(s - \tfrac12\right)
$$
(For the last equation, see, for example, \cite[4.4]{raghuram-tanabe}; the presence of an additional factor of $2$ makes no difference to the discussion at hand.) Hence, $L_\infty(s, \pi_\infty)$ has a pole at 
$s = 1/2,$ in other words, {\it nonvanishing of the global $L$-function at a (seemingly interesting) point  does not guarantee that the point is a critical point.} 
}\end{rem}

\smallskip

Now, given a cuspidal representation $\pi$ as above with a nonvanishing $(\GL_1, \chi)$-period, and consequently with $L(\tfrac12, \pi \otimes \chi) \neq 0$, we want to analyze the dictum of arithmeticity, whence we take $\pi$ to be of cohomological type. But, even if $\pi$ is of cohomological type with $L(\tfrac12, \pi \otimes \chi) \neq 0,$ it is not guaranteed that $s=1/2$ is a critical point. The same counterexample as in the above remark will work for this. Henceforth, we assume in addition that $s=1/2$ is a critical point; i.e., by definition, $L_\infty(s, \pi\otimes\chi)$ and 
$L_\infty(s, \pi^{\sf v} \otimes\chi^{-1})$ are regular at $s=1/2.$ Since local $L$-values are always nonzero, 
under the additional assumption of criticality of $s = 1/2$, we get
\begin{equation}
\label{eqn:gl2-global-finite}
L(\tfrac12, \pi \otimes \chi) \neq 0 \ \Longleftrightarrow \ L_f(\tfrac12, \pi \otimes \chi) \neq 0.
\end{equation}
Now one can prove arithmeticity, for which we need the following algebraicity theorem due to Manin \cite{manin} in certain special cases, and more generally due to  
Shimura \cite{shimura-math-ann}; for the version stated below, see \cite{raghuram-tanabe}.  

\begin{prop}
\label{prop:manin-shimura}
Let $\pi$ be a cohomological cuspidal automorphic representation of $\GL_2(\A)$. There exists two nonzero complex number $p^\pm(\pi)$ such that if $s =  \tfrac12$ is critical then for any algebraic Hecke character $\chi$, and any 
$\sigma \in \autc$ we have 
$$
\sigma\left(\frac{L_f(\tfrac12, \pi \otimes \chi)}{p^{\epsilon_\chi}(\pi) \G(\chi) (2 \bfgreek{pi} i)^{d_\infty}}\right) \ = \ 
\frac{L_f(\tfrac12, {}^\sigma\pi \otimes {}^\sigma\chi)}{p^{\epsilon_\chi}(\pi) \G({}^\sigma\chi) (2 \bfgreek{pi} i)^{d_\infty}},
$$
where $\G(\chi)$ is the Gau\ss~sum attached to $\chi,$ $\epsilon_\chi$ is a sign keeping track of the parity of $\chi,$ and $d_\infty$ is an integer determined entirely by $\pi_\infty.$ (For more details see \cite{raghuram-tanabe}.)
\end{prop}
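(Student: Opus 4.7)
The plan is to prove algebraicity and Galois equivariance by identifying the critical $L$-value with a pairing between a cuspidal cohomology class attached to $\pi$ and a modular-symbol cycle built from $\chi$, and then invoking the $\Q$-rational structure on cuspidal cohomology. Since $\pi \in {\rm Coh}(\GL_2/\Q, \mu)$, it contributes to $H^1_{\rm cusp}(S^G, \E_\mu)$; by Clozel's rational structure (cf.\ \cite[Th\'eor\`eme 3.19]{clozel}), the $\pi_f$-isotypic component is defined over the number field $\Q(\pi_f)$, and under the action of the connected component group $\pi_0(G_\infty) = \{\pm 1\}$ it decomposes as $H^1_{\rm cusp}(\pi_f) = H^1_{\rm cusp}(\pi_f)^+ \oplus H^1_{\rm cusp}(\pi_f)^-$, each summand being one-dimensional and rational over $\Q(\pi_f)$. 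On the transcendental side, a fixed generator of $H^1(\g_\infty, K^\circ_\infty; \pi_\infty \otimes E_\mu)^\pm$ together with the Whittaker model gives a $\C$-basis of each $\pm$-piece; the ratio between the rational basis and this Whittaker-theoretic basis is the period $p^\pm(\pi) \in \C^\times$.

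Next, I would realize the critical $L$-value as such a pairing. A standard Birch-lemma unfolding, combined with the Fourier expansion of $\chi$ across its conductor, shows that the integral $\ell(1/2, \phi, \chi)$ in \eqref{eqn:gl2-zeta} equals (up to a product of local ratios $Z_p/L_p$ at ramified primes, and up to the archimedean factor $(2\bfgreek{pi} i)^{d_\infty}$) the quantity $\G(\chi) \cdot L_f(1/2, \pi \otimes \chi)$. The sign $\epsilon_\chi \in \{\pm\}$ records whether the modular-symbol cycle sits in the $+$- or $-$-eigenspace for the Cartan involution, and accordingly pairs with the $p^{\epsilon_\chi}(\pi)$-normalised rational class. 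The criticality hypothesis at $s=1/2$ guarantees that the archimedean local $L$-factor is finite and nonzero, so the ratio of the archimedean zeta integral to the archimedean $L$-factor evaluates to a concrete power of $(2\bfgreek{pi} i)$, absorbed into $(2\bfgreek{pi} i)^{d_\infty}$.

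Galois equivariance then follows by a rationality argument, tracking how each ingredient transforms under $\sigma \in \autc$: the rational cohomology class transforms via Clozel's theorem; the modular-symbol cycle built from $\chi$ transforms via the classical identity $\sigma(\G(\chi)) = \chi_{\rm fin}(u_\sigma)\,\G({}^\sigma\chi)$ for a suitable cyclotomic unit $u_\sigma$, which reorganises the Gauss sum coherently between $\chi$ and ${}^\sigma\chi$; and the finite-prime ratios $Z_p/L_p$ are chosen (via the new-vector/essential Whittaker vector) to be algebraic and $\sigma$-compatible. Combining these, one shows that
\[
\frac{L_f(1/2, \pi\otimes\chi)}{p^{\epsilon_\chi}(\pi)\,\G(\chi)\,(2\bfgreek{pi} i)^{d_\infty}} \ \in \ \Q(\pi_f, \chi) \ \subset \ \overline{\Q},
\]
and that applying $\sigma$ produces the identity of the proposition. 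The hard part will be the Galois-equivariant bookkeeping of the local ratios $Z_p(1/2, W_p, \chi_p)/L_p(1/2, \pi_p \otimes \chi_p)$ at primes dividing the conductors of $\pi$ or $\chi$, together with the matching of signs $\epsilon_\chi$ and archimedean exponents $d_\infty$ on the two sides of the equality after applying $\sigma$; this is handled by working uniformly with the new-vector theory of Casselman and carefully normalising the archimedean cohomology generator so that its behaviour under the component group is preserved by $\sigma$.
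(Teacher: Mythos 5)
The paper does not actually prove Proposition~\ref{prop:manin-shimura}; it simply cites Manin, Shimura, and Raghuram--Tanabe. Your sketch supplies a genuine argument, and it follows the same cohomological strategy that underlies the cited reference \cite{raghuram-tanabe}: use the $\Q(\pi_f)$-rational structure on $H^1_{\rm cusp}$ coming from Clozel's Th\'eor\`eme~3.19, split into $\pm$ eigenspaces under $\pi_0(K_\infty)$, define $p^\pm(\pi)$ as the comparison constants between the rational basis and the Whittaker-theoretic one, interpret the critical $L$-value via Birch unfolding as a pairing against modular-symbol cycles, and then track each ingredient's $\sigma$-equivariance (Clozel for the cohomology class, the Gauss-sum identity for $\chi$, new-vector normalisation for the local ratios). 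The sketch is correct in outline and captures all the essential moving parts. Two places where you are hand-waving the most and would need to be careful in a full write-up: (a) the claim that the archimedean ratio \emph{evaluates to} a power of $(2\bfgreek{pi} i)$ is a nontrivial computation that, for $\GL_2$, is done by explicit integration (and yields the exponent $d_\infty$); criticality of $s=1/2$ ensures finiteness but does not by itself pin down the value; (b) the statement as printed in the paper carries $p^{\epsilon_\chi}(\pi)$ on \emph{both} sides, whereas the usual formulation (and the one your proof would actually produce) has $p^{\epsilon_{\sigma\chi}}({}^\sigma\pi)$ on the right; you should flag this discrepancy rather than silently reproduce it.
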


A trivial corollary to the above deep proposition is that 
\begin{equation}
\label{eqn:gl2-gross}
L_f(\tfrac12, \pi \otimes \chi) \neq 0 \  \Longleftrightarrow \ 
L_f(\tfrac12, {}^\sigma\pi \otimes {}^\sigma\chi) \neq 0.
\end{equation}
The reader should compare this with Gross's conjecture mentioned in the introduction. 

\begin{thm}[Arithmeticity for $(\GL_1,\chi)$-periods for $\GL_2$]
Let $\pi$ be a cohomological cuspidal automorphic representation of $\GL_2(\A_\Q)$. Suppose that $\pi$ 
has a nonvanishing $(\GL_1, \chi)$-period for an algebraic Hecke character of $\Q$. Suppose, further, that 
$s = 1/2$ is a critical point for the $L$-function $L(s, \pi \otimes \chi)$. Then ${}^\sigma\pi$ has a nonvanishing 
$(\GL_1, {}^\sigma\chi)$-period. 
\end{thm}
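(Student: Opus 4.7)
The proof is essentially a chaining together of the three equivalences already assembled in the subsection, together with a verification that criticality is preserved under the action of $\autc$. My plan is to run the following chain forward, then invoke an arithmeticity statement for the central $L$-value, and finally run the chain backward with ${}^\sigma\pi$ and ${}^\sigma\chi$ in place of $\pi$ and $\chi$. Concretely, the first step is to apply Proposition~\ref{prop:gl2-characterize} to the hypothesis that $\pi$ has a nonvanishing $(\GL_1,\chi)$-period to conclude that $L(\tfrac12,\pi\otimes\chi)\neq 0$. Combining this with the criticality assumption at $s=1/2$ and the equivalence~(\ref{eqn:gl2-global-finite}), the nonvanishing of the completed $L$-value forces $L_f(\tfrac12,\pi\otimes\chi)\neq 0$.

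Next, I would invoke Proposition~\ref{prop:manin-shimura}, or more precisely its corollary~(\ref{eqn:gl2-gross}), to transfer this nonvanishing: $L_f(\tfrac12,{}^\sigma\pi\otimes{}^\sigma\chi)\neq 0$. Note that this is where the hypothesis that $\chi$ is algebraic is used (so that ${}^\sigma\chi$ is a well-defined Hecke character) and that ${}^\sigma\pi$ is a cohomological cuspidal automorphic representation by Clozel's Theorem~\ref{thm:clozel}.

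The third step is to check that $s=1/2$ remains a critical point for $L(s,{}^\sigma\pi\otimes{}^\sigma\chi)$, so that the equivalence~(\ref{eqn:gl2-global-finite}) can be used in the reverse direction. Here one uses that over $F=\Q$ there is a unique archimedean place which is fixed by every $\sigma\in\autc$; hence by Proposition~\ref{prop:arch}(ii) (the $n=2$, even case) the archimedean component ${}^\sigma\pi_\infty$ is isomorphic to $\pi_\infty$. Similarly, for an algebraic Hecke character $\chi$ of $\Q$, the archimedean component $\chi_\infty$ is of the form $\operatorname{sgn}^{\epsilon_\chi}|\cdot|^m$ and is unchanged by $\sigma$ (by the same argument used for $\omega_\Pi$ in the proof of Proposition~\ref{prop:arch}). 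Consequently $L_\infty(s,{}^\sigma\pi\otimes{}^\sigma\chi)=L_\infty(s,\pi\otimes\chi)$, and the same holds for the contragredient, so criticality at $s=1/2$ passes to $({}^\sigma\pi,{}^\sigma\chi)$.

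With criticality in hand, the reverse of~(\ref{eqn:gl2-global-finite}) yields $L(\tfrac12,{}^\sigma\pi\otimes{}^\sigma\chi)\neq 0$, and a final application of Proposition~\ref{prop:gl2-characterize} to the cuspidal representation ${}^\sigma\pi$ produces a cusp form $\phi'\in V_{{}^\sigma\pi}$ with $\ell_{{}^\sigma\chi}(\phi')\neq 0$, which is exactly the claim. The main conceptual input is Proposition~\ref{prop:manin-shimura}; the only nontrivial checkpoint in the otherwise mechanical forward/backward shuttle is the stability of criticality under $\sigma$, which I expect to be the subtlest piece of bookkeeping but is easily handled here because of the simple archimedean situation for $\GL_2/\Q$.
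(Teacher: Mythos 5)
Your proof is correct and follows essentially the same route as the paper (chain Proposition~\ref{prop:gl2-characterize}, equivalence~(\ref{eqn:gl2-global-finite}), and the corollary~(\ref{eqn:gl2-gross}) of Proposition~\ref{prop:manin-shimura} forward, then run it backward for the conjugated data). Your explicit verification that criticality of $s=1/2$ persists for $({}^\sigma\pi,{}^\sigma\chi)$ is a useful extra check that the paper leaves implicit; note, though, that for the backward direction one only needs $L_f(\tfrac12,{}^\sigma\pi\otimes{}^\sigma\chi)\neq 0\Rightarrow L(\tfrac12,{}^\sigma\pi\otimes{}^\sigma\chi)\neq 0$, which already holds without criticality since archimedean $L$-factors never vanish and the completed $L$-function of a cusp form is entire.
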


\begin{proof}
Follows from Proposition~\ref{prop:gl2-characterize}, (\ref{eqn:gl2-global-finite}) and (\ref{eqn:gl2-gross}). 
\end{proof}

Before closing this section, let us note that the above discussion is equivalent to taking: 
$$
G = \GL_2 \times \GL_1, \ \ {\rm and} \ \ H = \Delta\GL_1 := 
 \left\{ \left(x, \left(\begin{array}{ll} x & 0 \\ 0 & 1 \end{array}\right)\right) \ : \ x \in \GL_1 \right\}.
$$
It is from this perspective that it generalizes readily to the context of $\GL_n$ and $\GL_{n-1}$ which we discuss in 
Section~\ref{sec:gln-gl(n-1)}.

\section{Arithmeticity of Shalika models for $\GL_{2n}$} 

In the remainder of the paper, we shall consider various generalizations of the results in the previous section. One generalization of the Whittaker model for $\GL_2$ to $\GL_n$ is the so-called Shalika model. We will first define the notion of a {\it Shalika model} of a cuspidal automorphic representation $\Pi$ of ${\rm GL}_{2n}(\A)$ where $\A = \A_F$ is the adele ring of a number field $F$;
this particular situation was our original motivation to consider arithmeticity questions for periods.   Let
$$ \S:=\left\{
s =  \left( \!\!\begin{array}{ccc}
h &  0 \\
0 &  h
\end{array}\!\!\right)
\left( \!\!\begin{array}{ccc}
1 &  X\\
0 &  1
\end{array}\!\!\right) \Bigg|
\begin{array}{l}
h\in \GL_n\\
X\in {\rm M}_n
\end{array}\right\}\subset \GL_{2n} =: G.
$$
It is traditional to call $\S$ the Shalika subgroup of $G$. A character $\eta : F^\times \backslash \A^\times \to \C^\times$ and a character $\psi : F \backslash \A \to \C^\times$ can be extended to a character of $\S(\A)$:
$$s=\left(\!\! \begin{array}{ccc}
h &  0\\
0 &  h
\end{array}\!\!\right)\left(\!\! \begin{array}{ccc}
1 &  X\\
0 &  1
\end{array}\!\!\right) \mapsto (\eta \otimes \psi)(s) := \eta(\det(h))\psi(Tr(X)).$$
We will also denote $\eta(s) = \eta(\det(h))$ and $\psi(s) = \psi(Tr(X))$. For a cusp form $\varphi\in\Pi$, and a character 
$\eta$ with  $\eta^n = \omega_\Pi,$ consider the integral
$$
\S^\eta_\psi(\varphi)(g):=\int_{Z_{G}(\A)\S(F)\backslash \S(\A)} (\Pi(g)\cdot\varphi)(s)\eta^{-1}(s)\psi^{-1}(s) ds, \ \ 
g\in \GL_{2n}(\A). 
$$
When $n=1$, observe that $S^{\eta}_{\psi}$ is simply the $\psi$-Whittaker period of $\GL(2)$, since $\eta$ is forced to be the central character of $\Pi$.
\vskip 5pt

The following theorem, due to the works of many people (Jacquet--Shalika \cite{jacquet-shalika}, 
Asgari--Shahidi \cite{asgari-shahidi-generic, asgari-shahidi}, Hundley--Sayag \cite{hundley-sayag}) gives a necessary and sufficient condition for $\S^\eta_\psi$ to be non--zero.

\begin{thm}
\label{thm:shalika} 
Let $\Pi$ be a cuspidal automorphic representation of $\GL_{2n}(\A_F)$. 
For a pair of characters $(\eta, \psi)$, the following are equivalent:
\begin{enumerate}
\item[(i)] There is a $\varphi\in\Pi$ and $g\in G(\A)$ such that $\S^\eta_\psi(\varphi)(g)\neq 0$.
\item[(ii)] $\S^\eta_\psi$ defines an injection of $G(\A)$-modules
$$
\Pi\hookrightarrow \textrm{\emph{Ind}}_{\S(\A)}^{G(\A)}[\eta\otimes\psi].
$$
\item[(iii)] Let $S$ be any finite set of places containing $S_{\Pi,\eta}$.
The twisted partial exterior square $L$-function
$$
L^S(s,\Pi,\wedge^2\otimes\eta^{-1}):=\prod_{v\notin S} L(s,\Pi_v,\wedge^2\otimes\eta^{-1}_v)
$$
has a pole at $s=1$.
\item[(iv)] $\Pi$ is the transfer of a globally generic cuspidal automorphic representation $\pi$ of 
${\rm GSpin}_{2n+1}(\A)$ whose central character $\omega_\pi = \eta.$

\end{enumerate}
Moreover, when these conditions hold, the transfer in (iv) is strong at all archimedean places, in the sense that it respects L-parameters.
\end{thm}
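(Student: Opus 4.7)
The plan is to establish the four-way equivalence as a cycle $(ii) \Leftrightarrow (i) \Leftrightarrow (iii) \Leftrightarrow (iv)$, drawing on three classical inputs: Frobenius reciprocity, the Jacquet--Shalika integral representation of the exterior square $L$-function, and the functorial transfer from $\GSpin(2n{+}1)$ of Asgari--Shahidi together with its converse (automorphic descent) due to Hundley--Sayag. The archimedean strong transfer claim is then handled separately via the local Langlands correspondence.

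The equivalence $(i) \Leftrightarrow (ii)$ is essentially formal: the assignment $\varphi \mapsto \S^\eta_\psi(\varphi)$ is by construction a $G(\A)$-equivariant map from $\Pi$ into $\Ind_{\S(\A)}^{G(\A)}[\eta \otimes \psi]$, so non-vanishing of this map at some $g$ is equivalent to it being a nonzero intertwiner. Since $\Pi$ is irreducible, any nonzero such intertwiner is automatically an injection.

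For $(i) \Leftrightarrow (iii)$, the key input is the Jacquet--Shalika zeta integral: for a Schwartz--Bruhat function $\Phi$ on $\A^{2n}$, consider
$$Z(s, \varphi, \Phi) \ = \ \int_{\GL_n(F) \bs \GL_n(\A)} \S^\eta_\psi(\varphi) \begin{pmatrix} g & 0 \\ 0 & g \end{pmatrix} \Phi(e_n g)\, |\det g|^{s}\, dg.$$
The standard unfolding at unramified places shows that this Eulerian integral computes, up to finitely many local factors which can be arranged to be nonzero, the partial twisted exterior square $L$-function $L^S(s, \Pi, \wedge^2 \otimes \eta^{-1})$. Consequently, non-vanishing of the Shalika period produces the pole at $s=1$ by Eulerian factorization, while identical vanishing of the period kills the integral entirely and hence precludes the pole.

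For $(iii) \Leftrightarrow (iv)$, the Langlands--Shahidi method of Asgari--Shahidi supplies the direction $(iv) \Rightarrow (iii)$: a globally generic cuspidal $\pi$ on $\GSpin_{2n+1}(\A)$ with $\omega_\pi = \eta$ admits a functorial lift $\Pi$ whose exterior square $L$-function $L^S(s, \Pi, \wedge^2 \otimes \eta^{-1})$ has a pole at $s=1$, this pole reflecting $\Pi$ being in the image of the transfer. The converse is the automorphic descent theorem of Hundley--Sayag, which reconstructs the globally generic preimage $\pi$ on $\GSpin_{2n+1}$ from $\Pi$ by analyzing residues of Eisenstein series. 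This descent step is the technical heart of the statement and is the main obstacle; the remaining arrows are either formal or routine zeta-integral manipulations. Finally, for the strong archimedean transfer, I would compare $L$-parameters on the two sides: the Asgari--Shahidi lift is constructed compatibly with the local Langlands correspondence, so the parameter of $\Pi_v$ is the composition of the parameter of $\pi_v$ with $\GSpin_{2n+1}^\vee(\C) = \GSp_{2n}(\C) \hookrightarrow \GL_{2n}(\C)$, which is precisely the strong transfer assertion.
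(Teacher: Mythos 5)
The paper does not actually prove Theorem~\ref{thm:shalika}: it states it as a synthesis of known results and attributes each implication to the literature, exactly as you do — Jacquet--Shalika for the zeta integral computation, Asgari--Shahidi for the lifting from ${\rm GSpin}(2n+1)$, and Hundley--Sayag for the descent giving the converse. Your decomposition $(ii)\Leftrightarrow(i)\Leftrightarrow(iii)\Leftrightarrow(iv)$ and the archimedean strong-transfer remark match the standard structure of the argument from those references, so your outline is correct in approach.

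One technical slip in the sketch of $(i)\Leftrightarrow(iii)$: the integral you wrote,
\[
Z(s,\varphi,\Phi)=\int_{\GL_n(F)\backslash\GL_n(\A)}\S^\eta_\psi(\varphi)\begin{pmatrix}g&0\\0&g\end{pmatrix}\Phi(e_n g)\,|\det g|^s\,dg,
\]
does not unfold to $L^S(s,\Pi,\wedge^2\otimes\eta^{-1})$, because the block-diagonal element $\begin{pmatrix}g&0\\0&g\end{pmatrix}$ lies in $\S(\A)$, so $\S^\eta_\psi(\varphi)$ evaluated there simplifies to $\eta(\det g)\cdot\S^\eta_\psi(\varphi)(1)$ and the whole expression collapses to a constant times a Tate integral. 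The Jacquet--Shalika integral whose residue at $s=1$ is the Shalika period replaces $\S^\eta_\psi(\varphi)$ by the \emph{partial} Fourier coefficient
\[
\int_{M_n(F)\backslash M_n(\A)}\varphi\!\left(\begin{pmatrix}I&X\\0&I\end{pmatrix}\begin{pmatrix}g&0\\0&g\end{pmatrix}\right)\psi^{-1}(\mathrm{tr}\,X)\,dX,
\]
i.e.\ one performs only the unipotent integral inside and leaves the $\GL_n$-variable free for the $\Phi(e_n g)\,\eta^{-1}(\det g)\,|\det g|^s$ factor (also, $\Phi$ should be a Schwartz--Bruhat function on $\A^n$, not $\A^{2n}$). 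The pole at $s=1$ then arises from the Tate factor attached to $\Phi$, and its residue is precisely the full Shalika period, which is what makes $(i)\Leftrightarrow(iii)$ work. This is a correction to your formula, not to your strategy.
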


If $\Pi$ satisfies any one, and hence all, of the equivalent conditions of Theorem \ref{thm:shalika}, then we say that $\Pi$ {\it has an $(\eta,\psi)$-Shalika model}, and we call the isomorphic image $\S^\eta_\psi(\Pi)$ of $\Pi$ under $\S^\eta_\psi$ a {\it global $(\eta,\psi)$-Shalika model} of $\Pi$.

\vskip 5pt

There is a companion theorem to Theorem \ref{thm:shalika} which considers the ${\rm Sym}^2$ L-function (see \cite{asgari-shahidi-generic, asgari-shahidi} and \cite{hundley-sayag}):
\vskip 5pt

\begin{thm} \label{thm:Sym2}
Let $\Pi$ be a cuspidal automorphic representation of $\GL_{2n}(\A_F)$. 
 Then the following are equivalent:
\begin{enumerate}
 \item[(i)] Let $S$ be any finite set of places containing $S_{\Pi,\eta}$.
The twisted partial symmetric square $L$-function
$$
L^S(s,\Pi,{\rm Sym}^2\otimes\eta^{-1}):=\prod_{v\notin S} L(s,\Pi_v,{\rm Sym}^2\otimes\eta^{-1}_v)
$$
has a pole at $s=1$.
\item[(ii)] $\Pi$ is the transfer of a globally generic cuspidal automorphic representation $\pi$ of 
${\rm GSpin}_{2n}(\A)$ with  connected central character $\omega^0_\pi = \eta.$
\end{enumerate}
Moreover, when these conditions hold, the transfer in (ii) is strong at all archimedean places, in the sense that it respects L-parameters.
\end{thm}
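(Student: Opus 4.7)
The plan is to follow the template of Theorem \ref{thm:shalika}, replacing the exterior square $L$-function by the symmetric square and the odd orthogonal similitude group $\GSpin_{2n+1}$ by the even one $\GSpin_{2n}$. The two key ingredients needed are the Asgari--Shahidi functorial lifting of globally generic cuspidal representations of $\GSpin_{2n}$ to $\GL_{2n}$, and the Hundley--Sayag analysis of the twisted symmetric square $L$-function via a global Rankin--Selberg type integral. Together these provide the symmetric counterpart of the Jacquet--Shalika picture that underlies Theorem \ref{thm:shalika}.

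For the direction (ii)$\Rightarrow$(i), the dual group $\GSpin_{2n}^\vee = {\rm GSO}_{2n}$ sits inside $\GL_{2n}(\C)$ as the stabilizer up to similitudes of a nondegenerate symmetric bilinear form, and the similitude character corresponds on the spectral side to $\eta$. Consequently the composition $\text{Sym}^2 \circ \text{std}$, restricted to ${\rm GSO}_{2n}$, contains the similitude character as a direct summand. Since the transfer preserves local $L$-factors outside a finite set $S$, one obtains a factorization
\[
L^S(s, \Pi, \text{Sym}^2 \otimes \eta^{-1}) \ = \ \zeta^S_F(s)\cdot L^S(s, \pi, r'),
\]
where $r'$ denotes the complementary piece of $(\text{Sym}^2 \circ \text{std}) \otimes \eta^{-1}$. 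The Dedekind zeta factor supplies the desired simple pole at $s=1$.

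For (i)$\Rightarrow$(ii), I would invoke the Hundley--Sayag global integral representing $L^S(s, \Pi, \text{Sym}^2 \otimes \eta^{-1})$ through an Eisenstein series on an appropriate group paired with a cusp form from $\Pi$. A pole at $s=1$ yields, by residue analysis, a nonzero residual automorphic object; a Ginzburg--Rallis--Soudry style descent then produces a globally generic cuspidal automorphic representation $\pi$ of $\GSpin_{2n}(\A)$. The condition $\omega_\pi^0 = \eta$ is encoded in the choice of the twist. Matching Satake parameters at all unramified places shows that the weak functorial lift of $\pi$ agrees with $\Pi$, and strong multiplicity one for $\GL_{2n}$ finishes the identification.

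The main technical obstacle lies in the archimedean sharpness of the transfer: at $v\mid\infty$ one must identify $\Pi_v$ with the image of $\pi_v$ under the local $L$-homomorphism attached to ${\rm GSO}_{2n}(\C) \hookrightarrow \GL_{2n}(\C)$. For generic tempered $\pi_v$, this follows from Shahidi's tempered local Langlands correspondence for generic parameters; the non-tempered situation reduces to the standard module conjecture together with the Langlands quotient construction, both treated by Asgari--Shahidi, so no new archimedean input is required beyond the cited works.
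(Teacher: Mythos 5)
The paper does not actually prove Theorem~\ref{thm:Sym2}: it is stated as a companion to Theorem~\ref{thm:shalika} and attributed, without argument, to the works of Asgari--Shahidi \cite{asgari-shahidi-generic, asgari-shahidi} and Hundley--Sayag \cite{hundley-sayag}. Your sketch is therefore being compared not to a written proof but to a citation, and on that score it correctly reconstructs the architecture of those references: the (ii)~$\Rightarrow$~(i) direction rests on the Asgari--Shahidi lift from ${\rm GSpin}_{2n}$ to $\GL_{2n}$, with the invariant symmetric bilinear form on the dual side producing the $\zeta_F^S(s)$ factor; the (i)~$\Rightarrow$~(ii) direction is the Hundley--Sayag descent in the Ginzburg--Rallis--Soudry framework, yielding a generic cuspidal $\pi$ on ${\rm GSpin}_{2n}$ matching $\Pi$ at almost all places, after which strong multiplicity one for $\GL_{2n}$ identifies the weak lift; and the archimedean strength of the transfer is exactly what Asgari--Shahidi supply.

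Two points in your sketch are slightly imprecise and worth flagging. First, in (ii)~$\Rightarrow$~(i), after writing $L^S(s,\Pi,{\rm Sym}^2\otimes\eta^{-1}) = \zeta_F^S(s)\cdot L^S(s,\pi,r')$ you still need to know that the second factor is finite and nonzero at $s=1$ before the zeta pole can be said to survive; this requires the partial Euler product $L^S(s,\pi,r')$ to converge absolutely in a neighbourhood of $s=1$, which in turn uses the bounds towards Ramanujan that come packaged with the Asgari--Shahidi transfer (and the genericity of $\pi$). The literature often argues this direction instead via $\Pi^{\sf v}\cong\Pi\otimes\eta^{-1}$ and the factorization $L^S(s,\Pi\times\Pi\otimes\eta^{-1}) = L^S(s,\Pi,{\rm Sym}^2\otimes\eta^{-1})\,L^S(s,\Pi,\wedge^2\otimes\eta^{-1})$, then rules out the exterior-square pole; either way, some extra work is being elided. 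Second, in (i)~$\Rightarrow$~(ii), Hundley--Sayag do not literally provide a global integral representing $L^S(s,\Pi,{\rm Sym}^2\otimes\eta^{-1})$; rather, the pole of the Rankin--Selberg $L$-function $L^S(s,\Pi\times\Pi\otimes\eta^{-1})$ is what produces the residual Eisenstein series on a larger ${\rm GSpin}$ group, whose Fourier coefficients carry the descent. These are wrinkles in phrasing rather than gaps in logic, and your proposal correctly identifies every substantive ingredient cited by the authors.
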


\vskip 5pt

Finally, here is the main result of this section:
\vskip 5pt

\begin{thm}(Arithmeticity of Shalika periods.) 
\label{thm:arithmeticity-shalika}
Suppose that $F$ is totally real. Let $\Pi$ be a cohomological cuspidal automorphic representation of 
$\GL_{2n}(\A_F)$ which has an $(\eta, \psi)$-Shalika model.  Then, for any $\sigma \in {\rm Aut}(\C)$, ${}^{\sigma}\Pi$ is a cohomological cuspidal automorphic representation with a 
$({}^{\sigma}\eta, \psi)$-Shalika model. 
\end{thm}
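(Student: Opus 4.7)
The plan is to combine Clozel's arithmeticity theorem with the L-function characterization of Shalika models in Theorem~\ref{thm:shalika}(iii), using the fact that cohomological representations of $\GL_{2n}$ over a totally real field have symplectic-type archimedean parameters.

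First, Theorem~\ref{thm:clozel} yields that ${}^{\sigma}\Pi$ is a cohomological cuspidal automorphic representation of $\GL_{2n}(\A_F)$; since $F$ is totally real and the rank is even, Proposition~\ref{prop:arch}(ii) pins down the archimedean component as ${}^{\sigma}\Pi_{\infty} = \bigotimes_{v \in S_{\infty}} J_{\mu_{\sigma^{-1}v}}$. What remains is to verify that ${}^{\sigma}\Pi$ satisfies condition (iii) of Theorem~\ref{thm:shalika} for the character ${}^{\sigma}\eta$; the independence of that criterion from the additive character $\psi$ automatically yields a $({}^{\sigma}\eta,\psi)$-Shalika model rather than a $({}^{\sigma}\eta, {}^{\sigma}\psi)$-one.

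Second, I would repackage the Shalika hypothesis via the Rankin--Selberg factorization
\[
L^{S}(s,\Pi \times (\Pi \otimes \eta^{-1})) \ = \ L^{S}(s,\Pi,\wedge^{2} \otimes \eta^{-1}) \cdot L^{S}(s,\Pi,\mathrm{Sym}^{2} \otimes \eta^{-1}).
\]
By Jacquet--Shalika, the simple pole of the left-hand side at $s=1$ is equivalent to the essential self-duality $\Pi^{\vee} \cong \Pi \otimes \eta^{-1}$, and by Theorem~\ref{thm:shalika}(iii) this pole lies in the first factor, so $\Pi$ is of ``symplectic type'' (Theorem~\ref{thm:Sym2} prevents both factors from being polar simultaneously). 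Galois conjugation is compatible with essential self-duality, so ${}^{\sigma}\Pi^{\vee} \cong {}^{\sigma}\Pi \otimes ({}^{\sigma}\eta)^{-1}$, and the analogous factorization for ${}^{\sigma}\Pi$ has its pole at $s=1$ carried by exactly one of $\wedge^{2}$ or $\mathrm{Sym}^{2}$.

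Third --- and this is the decisive step --- I would show that the pole for ${}^{\sigma}\Pi$ lies in $\wedge^{2}$. Because $\Pi$ is cohomological, the archimedean parameter of $\Pi_{v}$ read off from (\ref{eqn:j-mu-even}) is a direct sum of the symplectic two-dimensional $W_{\R}$-parameters of $D_{\ell_{v,j}}|\ |^{-\mathsf{w}/2}$, and hence is itself symplectic at every $v \in S_{\infty}$. Proposition~\ref{prop:arch}(ii) tells us that the archimedean parameters of ${}^{\sigma}\Pi$ are merely a permutation of those of $\Pi$, so they too are symplectic. If the pole of the Rankin--Selberg L-function of ${}^{\sigma}\Pi$ instead lay in $\mathrm{Sym}^{2}$, then Theorem~\ref{thm:Sym2} combined with the strong archimedean transfer from $\GSpin_{2n}$ would force the archimedean parameters of ${}^{\sigma}\Pi$ to be of orthogonal type --- a contradiction. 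Hence $L^{S}(s, {}^{\sigma}\Pi, \wedge^{2} \otimes ({}^{\sigma}\eta)^{-1})$ has a pole at $s=1$, and Theorem~\ref{thm:shalika}(iii)$\Rightarrow$(i) produces the desired $({}^{\sigma}\eta,\psi)$-Shalika model.

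The main obstacle is the third step: the local-to-global comparison of ``symplectic type'' between $\Pi$ and its Galois conjugate. Making this airtight requires verifying that the symplectic vs.\ orthogonal dichotomy for an essentially self-dual cuspidal representation is locally dictated by its archimedean parameters in the manner needed, and that the strong archimedean transfers supplied by Theorems~\ref{thm:shalika} and~\ref{thm:Sym2} enforce the correct type at infinity --- in other words, using the full force of the classification of essentially self-dual cohomological representations of $\GL_{2n}$.
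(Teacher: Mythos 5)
Your proposal follows the same strategy as the paper's proof: invoke Clozel to get $^{\sigma}\Pi$ cohomological cuspidal, translate the Shalika hypothesis into essential self-duality $\Pi^{\vee}\cong\Pi\otimes\eta^{-1}$, observe that the pole of $L^S(s,{}^{\sigma}\Pi\times{}^{\sigma}\Pi\otimes{}^{\sigma}\eta^{-1})$ at $s=1$ must come from exactly one of the $\wedge^2$ or $\mathrm{Sym}^2$ factors, and rule out $\mathrm{Sym}^2$ using the archimedean parameters. However, the ``decisive step'' you flag as the main obstacle is precisely where the proof lives, and you leave it as an acknowledged gap rather than closing it.

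Here is what needs to be (and, in the paper, is) argued. Write the $L$-parameter of $\Pi_w$ ($w$ real) as $\phi_w=\bigoplus_{i=1}^{n}\phi_{w,i}$ with each $\phi_{w,i}=I(\ell_{w,i})\otimes|\ |^{-\mathsf{w}/2}$ a two-dimensional irreducible $W_{\mathbb{R}}$-representation. Since the $\ell_{w,i}$ are distinct, no $\phi_{w,i}$ is a character twist of any $\phi_{w,j}$ with $j\ne i$; consequently any non-degenerate $W_{\mathbb{R}}$-equivariant bilinear form on $\phi_w$ with similitude character $\nu$ decomposes block-diagonally as $\bigoplus_i B_i$ with $0\ne B_i\in\Hom_{W_{\mathbb{R}}}(\phi_{w,i}\otimes\phi_{w,i},\nu)$. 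By Schur's lemma this $\Hom$-space is at most one-dimensional, and when nonzero it is spanned by the pairing valued in $\wedge^2\phi_{w,i}=\det\phi_{w,i}$, hence is \emph{skew-symmetric}. This forces $\nu=\det\phi_{w,i}=\mathrm{sgn}^{\mathsf{w}}|\ |^{-\mathsf{w}}$ for all $i$ (which, together with (\ref{eqn:sigma-omega}), gives $({}^{\sigma}\eta)_v=\eta_v=\eta_{\sigma^{-1}v}$, a fact the paper records along the way), and --- crucially --- shows that $\phi_w$ cannot preserve any non-degenerate \emph{symmetric} form with any similitude. If $L^S(s,{}^{\sigma}\Pi,\mathrm{Sym}^2\otimes{}^{\sigma}\eta^{-1})$ had a pole at $s=1$, Theorem~\ref{thm:Sym2}'s strong archimedean transfer from $\GSpin_{2n}$ would give ${}^{\sigma}\Pi_v=\Pi_{\sigma^{-1}v}$ a symmetric invariant form, contradicting the above. (One more small inaccuracy in your write-up: the mutual exclusivity of the two poles is not a consequence of Theorem~\ref{thm:Sym2}; it follows from the fact that the Rankin--Selberg $L$-function $L^S(s,{}^{\sigma}\Pi\times{}^{\sigma}\Pi\otimes{}^{\sigma}\eta^{-1})$ has at most a simple pole at $s=1$.) None of this requires ``the full force of the classification of essentially self-dual cohomological representations of $\GL_{2n}$''; it is a direct computation with the explicit form of $\Pi_v$ in (\ref{eqn:j-mu-even})--(\ref{eqn:pi-infinity-even}) plus Schur's lemma.
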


\begin{proof} The proof is the content of the appendix of \cite{grobner-raghuram-2}; but for the sake of completeness we give a brief sketch here, elaborating upon certain important points. 
  The hypothesis that $\Pi$ is a cohomological cuspidal representation having an $(\eta, \psi)$-Shalika model imposes certain restrictions on $\eta.$ In particular, we claim that 
{\it $\eta_v$ is independent of $v \in S_\infty$.} 

\vskip 5pt

Recall from Sections~\ref{sec:cohomology} and \ref{sec:GL(N)} that the representation $\Pi$ being cohomological means that there is a highest weight $\mu = (\mu_v)_{v \in \S_\infty}$, with 
$\mu_v = (\mu_{v,1}, \dots, \mu_{v,2n})$ where $\mu_{v,j} \in \Z$ and $\mu_{v,1} \geq \mu_{v,2} \geq \cdots \geq 
\mu_{v,2n},$ etc., and, from (\ref{eqn:j-mu-even}) and (\ref{eqn:pi-infinity-even}) we have
\begin{equation}
\label{eqn:pi-v-explicit}
\Pi_v = J_{\mu_v} = D_{\ell_{v,1}}|\ |^{-{\sf w}/2} \times \dots \times D_{\ell_{v,n}}|\ |^{-{\sf w}/2}.
\end{equation}
Then the central character of $\Pi_v$ is given by
$$
\omega_{\Pi_v} = ({\rm sgn}^{\ell_{v,1} + 1}|\ |^{-{\sf w}})\cdots  ({\rm sgn}^{\ell_{v,n} + 1}|\ |^{-{\sf w}}) = 
{\rm sgn}^{n{\sf w}}|\ |^{-n{\sf w}}.
$$
Now let us invoke the hypothesis that $\Pi$ has an $(\eta, \psi)$-Shalika model. Then $\eta^n = \omega_\Pi$. Hence for all $v \in S_\infty$, there is an $e_v \in \{0,1\}$ such that 
$\eta_v = {\rm sgn}^{\sf w} |\ |^{-{\sf w}} {\rm sgn}^{e_v}$, with $n e_v \equiv 0 \pmod{2}.$ In fact, a stronger statement is true: for all $v \in S_\infty$ one has
\begin{equation}
\label{eqn:eta-v}
\eta_v \ = \ {\rm sgn}^{\sf w} |\ |^{-{\sf w}} .
\end{equation}
To prove (\ref{eqn:eta-v}), note that  by Theorem \ref{thm:shalika}, $\Pi$ is a Langlands functorial transfer of a cuspidal representation of $\GSpin_{2n+1}(\A)$ with central character $\eta$.  Moreover, the lift is strong at the archimedean places, i.e., for each archimedean place, the L-parameter $\phi_v$ of $\Pi_v$ factors through the dual group 
${\rm GSp}_{2n}(\C)$ of $\GSpin_{2n+1}$ with similitude character $\eta_v$. For $v \in S_\infty,$ 
let $(\phi_v, U_v)$ be the $L$-parameter of $\Pi_v$, i.e., $\phi_v$ is a representation of $W_{F_v} = W_\R$ on a 
$2n$-dimensional $\C$-vector space $U_v$. From (\ref{eqn:pi-v-explicit}) we know that 
$$
(\phi_v, U_v) \ = \ \bigoplus_{i=1}^n (\phi_{v,i}, U_{v,i}), 
$$
where each $\phi_{v,i}$ is an irreducible $2$-dimensional representation of $W_{F_v}.$ 
To say that the $L$-parameter $\phi_v$ factors through ${\rm GSp}(2n,\C)$, means that there is a skew-symmetric non-degenerate bilinear form $B_v$ on $U_v$ such that
$$
B_v \in {\rm Hom}_{W_{F_v}}(U_v \otimes U_v, \eta_v). 
$$
Further, from (\ref{eqn:pi-v-explicit}) one knows that 
$$
\phi_{v,j} \ = \ {\rm Ind}_{\C^\times}^{W_\R}\left(re^{i \theta} \mapsto e^{i \ell_{v,j} \theta} \right) \otimes |\ |^{-{\sf w}/2} 
\ =: \ I(\ell_{v,j}) \otimes |\ |^{-{\sf w}/2}. 
$$
In particular, the dual $\phi_{v,j}^{\sf v}$ is $I(\ell_{v,j}) \otimes |\ |^{{\sf w}/2}.$ Hence if $i \neq j$ then $\phi_{v,i}$  is not twist-equivalent to the dual of  $\phi_{v,j}.$  This implies that $B_v = \bigoplus_i B_{v,i}$ and each 
$B_{v,i} := B|_{U_{v,i}}$ is a non-degenerate skew-symmetric bilinear form on $U_{v,i}$: 
$$
B_{v,i} \in {\rm Hom}_{W_{F_v}}(U_{v,i} \otimes U_{v,i}, \eta_v) \ = \ 
{\rm Hom}_{W_{F_v}}(\phi_{v,i} \otimes \phi_{v,i}, \eta_v).
$$
But $B_{v,i}$ is skew-symmetric, hence 
$$
0 \neq B_{v,i} \in {\rm Hom}_{W_{F_v}}( \wedge^2U_{v,i}, \eta_v) = {\rm Hom}_{W_{F_v}}({\rm det}(\phi_{v,i}), \eta_v) = 
{\rm Hom}_{W_{F_v}}({\rm sgn}^{\sf w}|\ |^{-{\sf w}}, \eta_v), 
$$
since ${\rm det}(\phi_{v,i}) = {\rm sgn}^{\ell_{v,i} + 1}|\ |^{-{\sf w}} = {\rm sgn}^{\sf w}|\ |^{-{\sf w}}.$ This proves 
(\ref{eqn:eta-v}). Together with  (\ref{eqn:sigma-omega}), we conclude that
 \[  ({}^\sigma\eta)_v =  \eta_v = \eta_{\sigma^{-1} v} \]
 for all $\sigma \in \autc$.

\vskip 5pt

Next, it follows by Theorem~\ref{thm:shalika} that $L^S(s, \Pi, \wedge^2 \otimes \eta^{-1})$ has a pole at $s=1$, and thus 
$ \Pi^{\sf v} \cong \Pi \otimes \eta^{-1}$. For $\sigma \in {\rm Aut}(\C)$, 
we see, by checking locally almost everywhere, that 
\[  {^{\sigma}}\Pi^{\sf v} \cong {^{\sigma}}\Pi \otimes  {^{\sigma}}\!\eta^{-1}, \]
and thus
\[  L^S(s, {^{\sigma}}\Pi \otimes  {^{\sigma}}\Pi \otimes {^{\sigma}}\!\eta^{-1}) = L^S(s, {^{\sigma}}\Pi, {\rm Sym}^2 \otimes {^{\sigma}}\!\eta^{-1}) \cdot
L^S(s, {^{\sigma}}\Pi, \wedge^2 \otimes {{^{\sigma}}\!\eta^{-1}})  \]
has a pole at $s = 1$. To prove the theorem, we need to show that the ${\rm Sym}^2$ $L$-function does not have a pole at $s = 1$. 

\vskip 5pt

   Let us 
suppose, for the sake of contradiction, that $ L^S(s, {^{\sigma}}\Pi, {\rm Sym}^2 \otimes {^{\sigma}}\!\eta^{-1})$ has a pole at $s  =1$. Then by Theorem \ref{thm:Sym2} , one knows that $ {^{\sigma}}\Pi$ is a Langlands functorial transfer from a cuspidal representation of $\GSpin_{2n}(\A)$ with connected central character ${^{\sigma}}\!\eta$, and this lift is strong at the archimedean places. Fix any $v \in S_\infty$ and put $w = \sigma^{-1} v.$ 
The $L$-parameter of 
${}^\sigma\Pi_v = \Pi_{\sigma^{-1}v} = \Pi_w$ (see (\ref{eqn:n-even-aut-c})) preserves a symmetric non-degenerate bilinear form $C_w$ with similitude character $({}^\sigma\eta)_v = \eta_v = {\rm sgn}^{\sf w}|\ |^{-{\sf w}}$. In this case, we get a non-degenerate symmetric bilinear form $C_{w,i}$ on $U_{w,i}$ and $W_{F_w}$ preserves this form up to similitude character 
${\rm sgn}^{\sf w}|\ |^{-{\sf w}}$, i.e., 
$$
C_{w,i} \in {\rm Hom}_{W_{F_w}}(U_{v,i} \otimes U_{v,i}, \ {\rm sgn}^{\sf w}|\ |^{-{\sf w}}).
$$
But each $\phi_{w,i}$ being irreducible, by Schur's Lemma, we know that the space
$$
{\rm Hom}_{W_{F_w}}(U_{v,i} \otimes U_{v,i}, \ {\rm sgn}^{\sf w}|\ |^{-{\sf w}}) \ = \ 
{\rm Hom}_{W_{F_w}}(\phi_{v,i},  \ \phi_{v,i}^{\sf v} \otimes {\rm sgn}^{\sf w}|\ |^{-{\sf w}}), 
$$
is one-dimensional. Hence, the non-degenerate symmetric form $C_{w,i}$ is a multiple of the non-degenerate 
skew-symmetric form $B_{w,i}$; a contradiction! 
\end{proof}

\begin{rem} {\rm 
The hypothesis that $F$ is totally real  is rather artificial. 
One expects the arithmeticity result to hold even without this hypothesis, however, the above proof would not go through. Suppose, for example, $F$ is an imaginary quadratic extension, then we need to consider cohomological representations of $\GL_{2n}(\C)$. For the infinite place $v$, the parameter of the representation $\Pi_v,$ which is a $2n$-dimensional representation of $W_\C = \C^\times$, is of the form: 
$$
\phi_v = \bigoplus_{j=1}^{2n} (z \mapsto z^{a_j}{\bar{z}}^{b_j}).
$$
(Here the $a_j$ and $b_j$ are half-integers; see Clozel~\cite[p.112]{clozel}.) If $\Pi$ has a Shalika model, then the image of the Langlands parameter $\phi_v$ is inside a split torus in 
${\rm Sp}(2n,\C)$. But this split torus may also be viewed as sitting inside ${\rm SO}(2n,\C)$. Hence, from information of 
$\Pi_\infty = \Pi_v$ it is not possible to deduce that the parameter of ${}^\sigma\Pi_v$ is not of orthogonal type. 
}\end{rem}

\vskip 5pt

\begin{rem} \label{r:pole}
{\rm The proof of Theorem \ref{thm:arithmeticity-shalika} amounts to showing that
\[ \text{$L^S(s,\Pi,\wedge^2\otimes\eta^{-1})$ has a pole at $s=1$} \Longrightarrow
\text{$L^S(s, {}^\sigma\Pi,\wedge^2\otimes {}^\sigma\eta^{-1})$ has a pole at $s=1$} \] 
under the conditions in the theorem. The same proof shows that when $F$ is totally real,   
\[ \text{$L^S(s,\Pi, {\rm Sym}^2\otimes\eta^{-1})$ has a pole at $s=1$} \Longrightarrow
\text{$L^S(s, {}^\sigma\Pi, {\rm Sym}^2\otimes {}^\sigma\eta^{-1})$ has a pole at $s=1$} \] 
when $\Pi$ is cuspidal cohomological.}\end{rem}

\vskip 5pt
\section{Arithmeticity of $\GL(n)/F$-periods for representations of $\GL(n)/E$}
\label{sec:glne-glnf}

The argument of the previous section can be applied to prove the arithmeticity of $\GL_n(F)$-periods for representations of $\GL_n(E)$, where $E$ is a quadratic extension of $F$.

More precisely, let $c$ be the nontrivial element in ${\rm Gal}(E/F)$ and let $\omega_{E/F}$  be the quadratic Hecke character associated to $E/F$ by global class field theory.
 Let $\chi$ be a Hecke character of $\A_E^{\times}$ whose restriction to $\A_F^{\times}$ is equal to $\omega_{E/F}$. Then for $\epsilon = \pm$, we set
\[ \omega_{E/F}^{\epsilon} = \begin{cases} 
1, \text{  if $\epsilon = +$;} \\
\omega_{E/F} \text{  if $\epsilon = -$.} \end{cases}  \qquad    \chi^{\epsilon} = \begin{cases}
1 \text{  if $\epsilon = +$;} \\
\chi \text{  if $\epsilon = -$.}
\end{cases}  \]

For a cuspidal representation $\Pi$ of $\GL_n(\A_E)$ and $\epsilon = \pm$, we shall  consider the period integral
\[  \mathcal{P}^{\epsilon}(\varphi) =  \int_{Z_F(\A_F) \GL_n(F) \backslash \GL_n(\A_F)}
\varphi(h) \cdot \omega_{E/F}^{\epsilon}(\det(h))
\, dh, \]
where $\varphi \in \Pi$.  For the period integral $\mathcal{P}^{\epsilon}$ to have a chance to be nonvanishing, it is necessary that the central character $\omega_{\Pi}$ of $\Pi$ is equal to 
$(\omega_{E/F}^{\epsilon})^n$  when restricted to the center $Z_F(\A_F) = \A_F^{\times}$ of $\GL_n(\A_F)$. 
\vskip 5pt

Associated to $\Pi$ is a pair of partial L-functions $L^S(s, \Pi, {\rm Asai}^{\pm})$, known as the ${\rm Asai}^{\pm}$ (or twisted tensor) L-function (see \cite[Section 7]{ggp}). One has
\[  L^S(s, \Pi, {\rm Asai}^-) =   L^S(s, \Pi \otimes \chi, {\rm Asai}^+) \]
 and
\[  L^S(s, \Pi \times \Pi^c)  = L^S(s, \Pi, {\rm Asai}^+) \cdot L^S(s, \Pi, {\rm Asai}^-) \]
where  $c$ acts on the representations of $\GL_n(\A_E)$ by 
\[  \Pi^c(g)  =  \Pi(c(g)). \]
 
\vskip 5pt

The following theorem is a consequence of the works of many people (Kim-Krishnamurthy \cite{KK1, KK2}, Flicker \cite{F1, F2},  Ginzburg-Rallis-Soudry \cite{grs}).
\vskip 5pt

\begin{thm} \label{thm:asai}
For a cuspidal automorphic representation $\Pi$ of $\GL_n(\A_E)$, the following are equivalent:
\vskip 5pt

\begin{enumerate}
\item[(i)] There is a $\varphi \in\Pi $  such that $\mathcal{P}^{\epsilon}(\varphi) \neq 0$.
 
\item[(ii)] For a sufficiently large finite set $S$ of places of $F$, the partial ${\rm Asai}^{\epsilon}$
L-function $L^S(s, \Pi, {\rm Asai}^{\epsilon})$  has a pole at $s=1$.

\item[(iii)] $\Pi \otimes \chi^{\epsilon \cdot (-1)^{n-1}}$ is the transfer (standard base change) of a globally generic cuspidal automorphic representation $\pi$ of the quasi-split $\U_n(\A)$.
 \end{enumerate}
 Moreover, when these conditions hold, the transfer in (iii) is strong at all archimedean places of $F$, in the sense that it respects L-parameters. 
\end{thm}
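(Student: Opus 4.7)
The plan is to establish the equivalence via the cyclic chain (i) $\Rightarrow$ (ii) $\Rightarrow$ (iii) $\Rightarrow$ (i), handling (i) $\Leftrightarrow$ (ii) by a Rankin--Selberg integral representation for the Asai $L$-function following Flicker, and (ii) $\Leftrightarrow$ (iii) by combining the functorial base change from quasi-split $\U_n$ (Kim--Krishnamurthy) with the automorphic descent method of Ginzburg--Rallis--Soudry adapted to unitary groups. The overall moral is the same as in the Shalika case of Theorem \ref{thm:shalika}: a global period is nonvanishing if and only if an associated ``distinguishing'' $L$-function has a pole at the edge of the critical strip, if and only if the representation lies in the image of a functorial transfer from the relevant endoscopic group.

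For (i) $\Leftrightarrow$ (ii), I would form the global zeta integral
$$Z(s, \varphi, \Phi, \epsilon) \; = \; \int_{Z_F(\A_F)\GL_n(F)\backslash \GL_n(\A_F)} \varphi(h)\, \omega_{E/F}^{\epsilon}(\det h)\, E(h, s; \Phi)\, dh,$$
where $E(h,s;\Phi)$ is the $\GL_n(\A_F)$-mirabolic Eisenstein series attached to a Schwartz--Bruhat function $\Phi$ on $\A_F^n$. Unfolding against the Whittaker expansion of $\varphi$ (which exists since every cuspidal representation of $\GL_n$ is globally generic), the integral factors into local zeta integrals whose unramified factors assemble into $L^S(s, \Pi, {\rm Asai}^{\epsilon})$. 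Since $E(h,s;\Phi)$ has a simple pole at $s=1$ with residue a nonzero multiple of $\Phi(0)$, taking residues shows that $L^S(s, \Pi, {\rm Asai}^{\epsilon})$ has a pole at $s=1$ if and only if $\varphi$ and $\Phi$ can be chosen so that $\mathcal{P}^{\epsilon}(\varphi) \neq 0$; the standard argument of arranging the finitely many ramified local ratios $Z_v/L_v$ to be nonvanishing at $s=1$ (compare the $\GL_2$ discussion in Section \ref{sec:gl2-gl1}) yields the equivalence.

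For (iii) $\Rightarrow$ (ii), the conjugate self-duality of the Langlands parameter built into base change forces $\Pi^c \cong \Pi^{\vee} \otimes \mu$ for an explicit Hecke character $\mu$, so that $L^S(s, \Pi \times \Pi^c)$ has a pole at $s=1$ by Jacquet--Shalika. Using the factorization
$$L^S(s, \Pi \times \Pi^c) \; = \; L^S(s, \Pi, {\rm Asai}^+) \cdot L^S(s, \Pi, {\rm Asai}^-)$$
together with the identity $L^S(s, \Pi, {\rm Asai}^-) = L^S(s, \Pi \otimes \chi, {\rm Asai}^+)$, this pole lands in precisely the ${\rm Asai}^{\epsilon}$ factor dictated by the parity of the central character. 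For the converse (ii) $\Rightarrow$ (iii), I would follow the descent recipe: form an Eisenstein series on the quasi-split $\U_{2n}(\A)$ induced from $\Pi$ on the Siegel Levi, observe that the ${\rm Asai}^{\epsilon}$ pole at $s=1$ forces a pole of this Eisenstein series whose residue is square-integrable, and take a Fourier--Jacobi coefficient of the residual representation along a suitable character to obtain a globally generic cuspidal $\pi$ on $\U_n(\A)$. The Kim--Krishnamurthy weak base change of $\pi$, together with strong multiplicity one on $\GL_n$, identifies this base change with $\Pi \otimes \chi^{\epsilon(-1)^{n-1}}$.

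The main obstacle is the descent step (ii) $\Rightarrow$ (iii): one must verify that the Fourier--Jacobi coefficient of the residual Eisenstein representation is nonzero, cuspidal, and globally generic, and that its base change carries exactly the twist $\chi^{\epsilon(-1)^{n-1}}$. Nonvanishing of the descent is tied, by a tower argument, to the pole lying in the ${\rm Asai}^{\epsilon}$ (rather than ${\rm Asai}^{-\epsilon}$) factor, while the correct twist is forced by the normalization of the Siegel induction. The ``moreover'' clause on archimedean strength of the transfer then follows because the Ginzburg--Rallis--Soudry descent is defined by an explicit integral whose archimedean local behavior is compatible with the archimedean local Langlands correspondence for $\U_n(F_v)$, so the archimedean component $\pi_v$ produced by the global descent is precisely the one predicted by the $L$-parameter of $\Pi_v$ descended along the appropriate embedding of dual groups.
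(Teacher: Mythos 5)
The paper does not prove Theorem~\ref{thm:asai}; it is simply stated as a consequence of the cited works of Kim--Krishnamurthy \cite{KK1,KK2}, Flicker \cite{F1,F2}, and Ginzburg--Rallis--Soudry \cite{grs}, with no argument supplied. Your sketch correctly identifies and assembles exactly those constituent pieces --- Flicker's Rankin--Selberg integral against a mirabolic Eisenstein series for (i)~$\Leftrightarrow$~(ii), the Kim--Krishnamurthy base change together with the Jacquet--Shalika pole criterion and the ${\rm Asai}^{+}\cdot{\rm Asai}^{-}$ factorization for (iii)~$\Rightarrow$~(ii), and the Ginzburg--Rallis--Soudry residual-Eisenstein-plus-Fourier--Jacobi descent on $\U_{2n}$ for (ii)~$\Rightarrow$~(iii) --- so it follows precisely the route the paper's citations point to, and your flagging of the cuspidality/nonvanishing of the descent and the precise twist $\chi^{\epsilon(-1)^{n-1}}$ as the genuine technical burdens is accurate.
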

\vskip 5pt

One has a local analog of the above global theorem, which is due to the works of many people (A. Kable \cite{kable}, Anandavardhanan-Kable-Tandon \cite{akt},  N. Matringe \cite{m2, m4}):
\vskip 5pt

\begin{thm} \label{thm:asai-local}
Let $v$ be a non-archimedean place of $F$ which is inert in $E$ and let $\Pi_v$ be a generic representation of $\GL_n(E_v)$. Then the following are equivalent:
\begin{enumerate}
\item[(i)] $\Pi_v$ is $(\GL_n(F_v), \omega_{E_v/F_v}^{\epsilon})$-distinguished.

\item[(ii)] The local Asai L-function $L(s, \Pi_v, {\rm Asai}^{\epsilon})$ has an ``exceptional"  pole at $s = 0$.

\item[(iii)] The L-parameter $\phi_v$ of $\Pi_v$ is conjugate-self-dual with sign $\epsilon$ (in the sense of \cite[Section 3]{ggp}).  \end{enumerate}
\end{thm}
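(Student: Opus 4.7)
The plan is to establish the chain (iii) $\Leftrightarrow$ (ii) $\Leftrightarrow$ (i), splitting the equivalences into a Galois-theoretic step and a local-harmonic-analysis step. All three conditions revolve around the local Asai L-factor $L(s, \Pi_v, \mathrm{Asai}^\epsilon)$, and the proof organizes naturally around two descriptions of this factor: the Artin-side definition via the L-parameter, and the integral-representation definition via zeta integrals against the Whittaker model.

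For (iii) $\Leftrightarrow$ (ii), the local Langlands correspondence should be used to identify $L(s, \Pi_v, \mathrm{Asai}^\epsilon)$ with the L-factor of the Weil--Deligne representation $\mathrm{As}^\epsilon(\phi_v)$ whose restriction to $W_{E_v}'$ is $\phi_v \otimes \phi_v^c$, with the two signs $\epsilon = \pm$ distinguishing how $\mathrm{Gal}(E_v/F_v)$ acts on the induced space. By Frobenius reciprocity, the trivial representation of $W_{F_v}$ occurs in $\mathrm{As}^\epsilon(\phi_v)$ exactly when $\phi_v$ admits a $W_{E_v}$-invariant bilinear form on which the nontrivial element $c$ acts by $\epsilon$---that is, precisely when $\phi_v$ is $\epsilon$-conjugate-self-dual in the sense of \cite[Section 3]{ggp}. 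One must then check that the pole thereby produced is an exceptional pole and not one coming from reducibility of $\phi_v$; this follows from the coincidence, for generic $\Pi_v$, of the Shahidi/Langlands--Shahidi L-factor on the analytic side with the Artin factor on the Galois side.

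For (i) $\Leftrightarrow$ (ii), set up the local Flicker--Asai zeta integral
\[  Z(s, W, \Phi) = \int_{N_n(F_v) \backslash \GL_n(F_v)} W(h) \, \Phi(e_n h) \, |\det h|_{F_v}^s \, dh, \]
where $W$ runs over the Whittaker model of $\Pi_v$ and $\Phi \in \mathcal{S}(F_v^n)$. Then $L(s, \Pi_v, \mathrm{Asai}^\epsilon)$ (twisted by $\omega_{E_v/F_v}^\epsilon$ in the $\epsilon = -$ case) is the g.c.d.\ of these integrals. Stratifying the integrand by $\GL_n(F_v)$-orbits on $F_v^n$, the non-open orbits contribute L-factors of Bernstein--Zelevinsky derivatives of $\Pi_v$, while the open orbit contributes the ``exceptional'' part; by definition an exceptional pole at $s = 0$ is one that survives this stratification. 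A residue computation along the open stratum at $s = 0$ then converts an exceptional pole into a nonzero $(\GL_n(F_v), \omega_{E_v/F_v}^\epsilon)$-equivariant functional on $\Pi_v$, and conversely such a functional can be fed back into the zeta integral to produce a pole of exceptional type.

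The main obstacle lies in the direction (i) $\Rightarrow$ (ii). Passing from an abstract distinguishing functional to an exceptional pole requires knowing that the functional extends from the open orbit across the closed strata in a controlled way, and this rests on uniqueness of the invariant form (the multiplicity-one theorem for the symmetric pair $(\GL_n(E_v), \GL_n(F_v))$ due to Flicker) combined with fine control of the Jacquet modules of $\Pi_v$ along the mirabolic subgroup. This is the technical heart of Matringe's work, and also the step where the genericity hypothesis on $\Pi_v$ enters most essentially, since without it the zeta-integral machinery does not even produce an L-factor with the expected properties.
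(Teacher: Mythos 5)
The paper does not give its own proof of this statement; it is cited as a compilation of results of Kable, Anandavardhanan--Kable--Tandon, and Matringe, so there is no internal argument to compare against. That said, there is a genuine gap in your proposed route, specifically in the half you treat as the easy one. You present (iii) $\Leftrightarrow$ (ii) as a ``Galois-theoretic step'': identify $L(s, \Pi_v, {\rm Asai}^{\epsilon})$ with the Artin L-factor of ${\rm As}^{\epsilon}(\phi_v)$ and apply Frobenius reciprocity to see that conjugate-self-duality of $\phi_v$ with sign $\epsilon$ forces a pole at $s=0$. But condition (ii) is not that the L-factor has a pole at $s=0$; it is that the pole is \emph{exceptional}, and ``exceptional'' is defined intrinsically through the Flicker--Asai zeta integrals: a pole is exceptional precisely when it is not already a pole of the subsidiary factor $L_{(0)}(s, \Pi_v)$ obtained by restricting to Schwartz functions $\Phi$ vanishing at the origin, equivalently when its residue functional is supported on the closed orbit. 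A pole of the Artin factor at $s=0$ could a priori be a pole of $L_{(0)}$ alone, that is, absorbed by a Bernstein--Zelevinsky derivative, and hence fail to be exceptional. The coincidence of the Shahidi factor with the Artin factor, which is what you invoke, says nothing about this stratification of poles into exceptional and non-exceptional; it only controls the full L-factor.

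In the actual literature the logical order is therefore the reverse of the one you propose: one first establishes (i) $\Leftrightarrow$ (ii) by the orbit-stratification and derivative analysis you correctly identify as the technical heart (the Flicker--Asai integral, the g.c.d.\ definition, the open/closed orbit dichotomy, Flicker's multiplicity-one theorem, and fine control of Jacquet modules along the mirabolic), and then obtains the Galois-side description (iii) either by first proving the result for discrete series, where every pole of the Asai factor is automatically exceptional so the above difficulty evaporates, and bootstrapping through the Zelevinsky classification of generic representations, or by a global base-change argument from unitary groups. Your sketch of (i) $\Leftrightarrow$ (ii) is essentially the right scaffolding, but the direction (iii) $\Rightarrow$ (ii), as you have arranged the chain, needs either to pass through (i) first or to include a separate argument that for a generic $\Pi_v$ any pole of the Asai factor at $s=0$ induced by conjugate-self-duality of $\phi_v$ is necessarily exceptional; nothing in your write-up supplies this.
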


\vskip 5pt
 
In analogy with Theorem \ref{thm:arithmeticity-shalika}, one has the following theorem.  \vskip 5pt

\begin{thm} \label{thm:arithmeticity-unitary}
Let $\Pi$ be a cohomological cuspidal automorphic representation of $\GL_n(\A_E)$ which is globally distinguished by $(\GL_n(\A_F), \omega_{E/F}^{\epsilon})$. Assume that one of the following conditions hold:
\vskip 5pt

\begin{enumerate}
\item[(1)] $n$ is odd; or
\item[(2)] $E$ is a totally complex and $F$ is totally real; or
\item[(3)] there is a finite place $v$ of $F$ which is inert in $E$ where $\Pi_v$ is discrete series.
\end{enumerate}
Then, for any $\sigma \in \autc$, ${}^\sigma\Pi$ is a cohomological cuspidal representation 
which is globally distinguished by  $(\GL_n(\A_F), \omega_{E/F}^{\epsilon})$.
\end{thm}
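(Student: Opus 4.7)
The plan is to follow the template established in the proof of Theorem~\ref{thm:arithmeticity-shalika}: reduce the claim to the preservation of a pole of an automorphic $L$-function under $\sigma$-conjugation, use the $\sigma$-equivariance of the Rankin--Selberg $L$-function of $\Pi \times \Pi^c$ to show that ${}^{\sigma}\Pi$ has \emph{some} Asai pole, and then use one of the hypotheses (1)--(3) to pin down the sign.

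First, by Clozel's theorem (Theorem~\ref{thm:clozel}), ${}^{\sigma}\Pi$ is a cohomological cuspidal automorphic representation of $\GL_n(\A_E)$, so by Theorem~\ref{thm:asai} it suffices to show that $L^S(s, {}^{\sigma}\Pi, {\rm Asai}^{\epsilon})$ has a pole at $s = 1$. The distinguishing hypothesis together with the factorization $L^S(s, \Pi \times \Pi^c) = L^S(s, \Pi, {\rm Asai}^+) \cdot L^S(s, \Pi, {\rm Asai}^-)$ implies $\Pi^c \cong \Pi^{\sf v}$. Checking the unramified local isomorphisms almost everywhere yields $({}^{\sigma}\Pi)^c \cong ({}^{\sigma}\Pi)^{\sf v}$, and hence $L^S(s, {}^{\sigma}\Pi \times {}^{\sigma}\Pi^c)$ has a pole at $s = 1$. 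Thus at least one of $L^S(s, {}^{\sigma}\Pi, {\rm Asai}^{\pm})$ has a pole at $s = 1$; since ${}^{\sigma}\Pi$ is cuspidal, the functorial interpretation in Theorem~\ref{thm:asai}(iii) (a cuspidal representation of $\GL_n(\A_E)$ cannot simultaneously transfer from both quasi-split unitary groups up to twist) forces exactly one to do so. Write $\epsilon'$ for the corresponding sign; the core task is to show $\epsilon' = \epsilon$.

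This matching is handled separately under each of the three hypotheses. \textbf{Case (1)} ($n$ odd): the necessary central character condition gives $\omega_{\Pi}|_{\A_F^{\times}} = (\omega_{E/F}^{\epsilon})^n = \omega_{E/F}^{\epsilon}$ since $n$ is odd. Because $\omega_{E/F}$ is a quadratic, and hence finite-order, Hecke character, ${}^{\sigma}\omega_{E/F} = \omega_{E/F}$, so $\omega_{{}^{\sigma}\Pi}|_{\A_F^{\times}} = \omega_{E/F}^{\epsilon}$. Comparing with $(\omega_{E/F}^{\epsilon'})^n = \omega_{E/F}^{\epsilon'}$ forces $\epsilon' = \epsilon$. \textbf{Case (2)} ($E$ totally complex over totally real $F$): every archimedean place $v$ of $E$ is complex, and by Proposition~\ref{prop:arch}(i), ${}^{\sigma}\Pi_v = \Pi_{\sigma^{-1} v}$. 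By the strong lifting statement in Theorem~\ref{thm:asai}, the archimedean $L$-parameter of $\Pi_v \otimes \chi_v^{\epsilon (-1)^{n-1}}$ is conjugate-self-dual with sign $\epsilon$; the same property therefore holds for the $L$-parameter of ${}^{\sigma}\Pi_v \otimes {}^{\sigma}\chi_v^{\epsilon (-1)^{n-1}}$, which forces $\epsilon' = \epsilon$. \textbf{Case (3)} (discrete series at an inert finite place $v$): Theorem~\ref{thm:asai-local} identifies the sign of the local Asai pole of $\Pi_v$ with the sign of conjugate self-duality of its $L$-parameter; this sign is preserved under the $\sigma$-action on the Langlands parameter, since an $\epsilon$-symmetric form on the parameter space transports to an $\epsilon$-symmetric form on its $\sigma$-twist. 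Again we conclude $\epsilon' = \epsilon$.

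The principal obstacle is the sign-matching step, and among the three cases Case (2) is the most delicate: one must verify that the sign of conjugate self-duality of the complex-place $L$-parameter is stable under the permutation of complex places induced by $\sigma$. This is the analogue of the archimedean parameter analysis carried out in the proof of Theorem~\ref{thm:arithmeticity-shalika}, and hinges on the explicit shape of the cohomological archimedean $L$-parameter together with the fact that the $\sigma$-action at complex places is simply a permutation (Proposition~\ref{prop:arch}(i)). The remaining cases are considerably simpler: Case (1) is central character bookkeeping, and Case (3) is a direct appeal to the local Theorem~\ref{thm:asai-local} at the single distinguished finite place.
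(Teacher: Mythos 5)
Your proposal follows the same skeleton as the paper's proof: pass from the period hypothesis to the pole of the Asai $L$-function, deduce $\Pi^c\cong\Pi^{\mathsf v}$, transport this relation to ${}^\sigma\Pi$ by checking unramified components, use the factorization of the Rankin--Selberg $L$-function to get that at least one Asai $L$-function of ${}^\sigma\Pi$ has a pole at $s=1$, and then use each of the three hypotheses to rule out the wrong sign. Cases (1) and (2) match the paper's argument essentially verbatim, and your framing of the ``exactly one pole'' step via the simplicity of the Rankin--Selberg pole is a legitimate way to justify defining $\epsilon'$ (the paper instead argues by contradiction, which is logically equivalent).

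There is, however, a genuine imprecision in Case (3). You argue that the conjugate-self-duality sign is preserved ``since an $\epsilon$-symmetric form on the parameter space transports to an $\epsilon$-symmetric form on its $\sigma$-twist.'' That statement is correct for the literal $\sigma$-conjugate ${}^\sigma\phi_v$ of the $L$-parameter $\phi_v$; but the $L$-parameter $\phi'_v$ of ${}^\sigma\Pi_v$ is not ${}^\sigma\phi_v$. Even in the discrete series case, the normalizations in the local Langlands correspondence (the fact that one is effectively dealing with half-integral twists of unitary representations) produce a discrepancy: $\phi'_v = {}^\sigma\phi_v \otimes \delta$ where $\delta$ is the quadratic character of $W_{E_v}^{\rm ab}\cong E_v^\times$ given by $x \mapsto \sigma(|x|_{E_v}^{1/2})/|x|_{E_v}^{1/2}$. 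One must then observe, as the paper does, that $\delta$ is trivial on $F_v^\times$ and hence conjugate-orthogonal in the sense of \cite[Section 3]{ggp}, so that twisting by $\delta$ does not change the sign of conjugate self-duality. Without this step your conclusion in Case (3) does not follow from what you wrote. You also flag Case (2) as the most delicate of the three; in fact Case (2) is routine once Proposition~\ref{prop:arch}(i) is available, and the genuine subtlety lives in Case (3) because of the normalization issue just described.
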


\begin{proof}
This is similar to the proof of Theorem  \ref{thm:arithmeticity-shalika}, exploiting Theorems \ref{thm:asai} and \ref{thm:asai-local}:
\begin{align}
&\text{$\Pi$ is globally distinguished by $(\GL_n(\A_F), \omega_{E/F}^{\epsilon})$} \notag \\
\Longrightarrow &\text{$L^S(s, \Pi, {\rm Asai}^{\epsilon})$ has a pole at $s = 1$} \notag \\
\Longrightarrow &\Pi^c \cong \Pi^{\vee} \notag \\
\Longrightarrow &{}^\sigma\Pi^c \cong {}^\sigma\Pi^{\vee} \notag \\
\Longrightarrow &\text{$L^S_E(s, {}^\sigma\Pi \times {}^\sigma\Pi^c)$ has a pole at $s=1$} \notag \\
\Longrightarrow &\text{$L^S(s, {}^\sigma\Pi, {\rm Asai}^{+})$ or $L^S(s, {}^\sigma\Pi, {\rm Asai}^{-})$ 
has a pole at $s = 1.$} \notag
\end{align}
Suppose that $L^S(s, {}^\sigma\Pi, {\rm Asai}^{-\epsilon})$ has a pole at $s = 1$, rather than $L^S(s,{}^\sigma \Pi, {\rm Asai}^{\epsilon})$, so that ${}^\sigma\Pi$ is distinguished by
 $(\GL_n(\A_F), \omega_{E/F}^{-\epsilon})$.
We shall obtain a contradiction under one of the hypotheses (1), (2) or (3). 
\vskip 5pt

Under hypothesis (1), so that $n$ is odd, we note that the central character of $\Pi$ is equal to $\omega_{E/F}^{\epsilon}$ when restricted to the center of $\GL_n(\A_F)$, whereas that of ${}^\sigma\Pi$ is equal to $\omega_{E/F}^{-\epsilon}$. In particular, one restriction is the trivial character of $\A_F^{\times}$ whereas the other is the quadratic character $\omega_{E/F}$. However, at all finite places, it is clear that the central characters of $\Pi_v$ and ${}^\sigma\Pi_v$ have the same restriction to the center of $\GL_n(F_v)$ since this restriction is at most a quadratic
character. This gives the desired contradiction under hypothesis (1).
\vskip 5pt

Now assume hypothesis (2), so that $E$ is a totally complex extension of the totally real field $F$.
By Theorem \ref{thm:asai}, $\Pi \otimes \chi^{\epsilon \cdot (-1)^n}$ is lifted from $U_n(\A)$ and the lift is strong at archimedean places. Thus for each place $v$ of $E$, the L-parameter $\phi_v$ of $\Pi_v$ is of the form
\[  \phi_v = \bigoplus_i  z^{a_i} \cdot \overline{z}^{-a_i} \]
with $a_1 > a_2 > \cdots > a_n$ half-integers and with each character $z \mapsto z^{a_i} \cdot \overline{z}^{-a_i}$ conjugate-self-dual  with sign $\epsilon$, i.e., 
\[ 2 a_i  = \begin{cases}
\text{even, if $\epsilon = +1$;} \\
\text{odd, if $\epsilon = -1$.} \end{cases} \]
 On the other hand, consider the $L$-parameter $\phi_v'$ of ${}^\sigma \Pi_v$. By Proposition \ref{prop:arch}, ${}^\sigma\Pi_v = \Pi_{\sigma^{-1} \cdot v}$, so that $\phi'_v = \phi_{\sigma^{-1}v}$.
 Thus $\phi'_v$ is the direct sum of characters which are conjugate-self-dual of sign $\epsilon$. But if ${}^\sigma \Pi_v$  is distinguished by
 $(\GL_n(\A_F), \omega_{E/F}^{-\epsilon})$, Theorem \ref{thm:asai} implies that $\phi'_v$ is the direct sum of characters which are conjugate-self-dual with sign $-\epsilon$. This gives the desired contradiction.

\vskip 5pt

Finally, assume hypothesis (3).  For all finite places $v$ of $F$, ${}^\sigma\Pi_v$ is locally distinguished by $(\GL_n(F_v), \omega_{E_v/F_v}^{-\epsilon})$ and so its L-parameter $\phi'_v$
is conjugate-self-dual with sign $-\epsilon$. On the other hand, the L-parameter
$\phi_v$ of $\Pi_v$ is conjugate self-dual of sign $\epsilon$. When $\Pi_v$ is discrete series, $\phi'_v = {}^\sigma \phi_v$ up to the quadratic character $x \mapsto \sigma( |x|_{E_v}^{1/2})/ |x|_{E_v}^{1/2}$ of $W_{E_v}^{ab} \cong E_v^{\times}$. Observe that this character is trivial on $F_v^{\times}$, so it is conjugate orthogonal in the sense of \cite[Section 3]{ggp}. In particular, $\phi_v$ and $\phi'_v$ are conjugate-self-dual of the same sign; this gives  the desired contradiction when $\Pi_v$ is discrete series.
 \end{proof}

\vskip 5pt

\section{Arithmeticity of $\GL_{n-1}$ periods on ${\rm GL}_n \times \GL_{n-1}$}
\label{sec:gln-gl(n-1)}

In this section, we 
consider the $\GL_{n-1}$-period for cuspidal representations of $\GL_n \times \GL_{n-1}$ over $\Q$. 
This context is a very nice generalization of the example in subsection~\ref{sec:gl2-gl1} where we studied 
$(\GL_1, \chi)$-periods for representations of $\GL_2$. 
The nonvanishing of periods is equivalent to a certain central $L$-value being nonzero. If we further impose the condition that the central value is a critical value then an appropriate algebraicity theorem for this critical value gives arithmeticity. The situation is analogous to Gross's conjecture concerning order of vanishing of critical motivic $L$-values as discussed in the introduction. 

\begin{thm} 
\label{thm:ggp-gln}
Let $\Pi$ be a cohomological cuspidal automorphic representation of 
$\GL_{n}(\A),$ say $\Pi \in {\rm Coh}(\GL_n, \mu)$. Here $\A$ is the adele ring of $\Q$.
Similarly, let $\Sigma \in  {\rm Coh}(\GL_{n-1}, \lambda).$ Suppose that $\Pi \otimes \Sigma$, as a representation of 
$(\GL_{n} \times \GL_{n-1})(\A),$ has a non-vanishing period with respect to the diagonally embedded subgroup 
$\GL_{n-1}(\A)$. Suppose further that the coefficient systems $E_{\mu}$ and $E_{\lambda}$ satisfy: 
$$
{\rm Hom}_{\GL_{n-1}}(E_{\mu} \otimes E_{\lambda}, \ 1\!\!1) \neq 0.
$$
Then for  any $\sigma \in {\rm Aut}(\C)$, the representation 
${}^{\sigma}\Pi \times {}^{\sigma}\Sigma$ also has a non-vanishing period with respect to 
$\GL_{n-1}(\A)$ under the assumption that \cite[Hypothesis 3.10]{raghuram-imrn} holds. 
\end{thm}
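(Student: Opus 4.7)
The plan is to mirror the proof strategy for $(\GL_1,\chi)$-periods of $\GL_2$ in Section~\ref{sec:gl2-gl1}: first characterize the nonvanishing of the $\GL_{n-1}(\A)$-period in terms of the nonvanishing of the central critical Rankin--Selberg $L$-value $L(\tfrac12,\Pi\times\Sigma)$, and then invoke an algebraicity theorem for this critical value to transport the nonvanishing under $\sigma \in \autc$.

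For the first step, I would replicate the zeta-integral computation of Proposition~\ref{prop:gl2-characterize}. Since $\Pi$ and $\Sigma$ are cuspidal, they are globally generic, so for pure-tensor Whittaker vectors $W_\phi, W_{\phi'}$ the Jacquet--Piatetski-Shapiro--Shalika integral
$$Z(s,W_\phi,W_{\phi'}) = \int_{U_{n-1}(\Q)\backslash \GL_{n-1}(\A)} W_\phi\!\begin{pmatrix}h & 0 \\ 0 & 1\end{pmatrix} W_{\phi'}(h)\,|\det h|^{s-1/2}\,dh$$
unfolds, at $s=\tfrac12$, to the global diagonal $\GL_{n-1}(\A)$-period of $\phi\otimes\phi'$. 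Factoring this Eulerian integral and dividing by local $L$-factors, one has $\ell(\phi\otimes\phi') = \bigl(\prod_{v\in S} Z_v/L_v\bigr) \cdot L(\tfrac12,\Pi\times\Sigma)$, where each local ratio is entire and can be made nonzero at $s=\tfrac12$ by a judicious choice of local Whittaker vectors. This gives the characterization
$$\exists\,\phi,\phi' \text{ with nonzero period} \iff L(\tfrac12,\Pi\times\Sigma)\neq 0.$$

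For the second step, the branching hypothesis $\Hom_{\GL_{n-1}}(E_\mu\otimes E_\lambda,\1)\neq 0$ is precisely the combinatorial criterion ensuring that $s=\tfrac12$ is critical for $L(s,\Pi\times\Sigma)$, so $L_\infty$ and its dual are regular there. Hence nonvanishing of the complete $L$-value is equivalent to nonvanishing of its finite part. The branching condition is manifestly $\autc$-equivariant: the highest weights of ${}^\sigma\!E_\mu$ and ${}^\sigma\!E_\lambda$ are obtained from those of $E_\mu$ and $E_\lambda$ by permuting archimedean components (as in Proposition~\ref{prop:arch}), and the branching rule for $\GL_n\supset\GL_{n-1}$ is purely combinatorial in these weights. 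Therefore $s=\tfrac12$ remains critical for $L(s,{}^\sigma\Pi\times{}^\sigma\Sigma)$. Applying the algebraicity theorem for the central critical value of Rankin--Selberg $L$-functions on $\GL_n\times\GL_{n-1}$ from \cite{raghuram-imrn} (which, under Hypothesis~3.10 there, yields an identity of the form $\sigma\bigl(L_f(\tfrac12,\Pi\times\Sigma)/\Omega(\Pi,\Sigma)\bigr) = L_f(\tfrac12,{}^\sigma\Pi\times{}^\sigma\Sigma)/\Omega({}^\sigma\Pi,{}^\sigma\Sigma)$ for certain nonzero periods $\Omega$), we deduce
$$L_f(\tfrac12,\Pi\times\Sigma)\neq 0 \iff L_f(\tfrac12,{}^\sigma\Pi\times{}^\sigma\Sigma)\neq 0,$$
and running the characterization of the first step in reverse for $({}^\sigma\Pi,{}^\sigma\Sigma)$, which is cohomological by Theorem~\ref{thm:clozel}, closes the argument.

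The main obstacle lies entirely in the algebraicity input of the second step. The Eulerian unfolding and the choice of local test vectors in the first step are routine for generic representations on $\GL_n\times\GL_{n-1}$; the hard analytic and cohomological work is in controlling the archimedean contribution so that the ratio of the critical $L$-value by suitable Whittaker/cohomological periods lies in a $\autc$-stable number field. This is exactly the role of \cite[Hypothesis 3.10]{raghuram-imrn}, which posits the nonvanishing of a certain archimedean cohomology class needed to pin down these periods; this is why the arithmeticity statement is conditional. Once granted, the deduction is formal, paralleling the passage from Proposition~\ref{prop:manin-shimura} to the implication (\ref{eqn:gl2-gross}).
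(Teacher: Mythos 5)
Your proposal follows the paper's proof essentially step for step: the same Jacquet–Piatetski-Shapiro–Shalika unfolding to reduce nonvanishing of the diagonal $\GL_{n-1}$-period to $L(\tfrac12,\Pi\times\Sigma)\neq 0$, the same appeal to the branching condition (Kasten--Schmidt) to ensure $s=\tfrac12$ is critical so that the global and finite $L$-values vanish simultaneously, and the same invocation of the algebraicity theorem of \cite{raghuram-imrn} under Hypothesis 3.10 to transport nonvanishing of $L_f(\tfrac12,\cdot)$ under $\sigma$. Your added observation that the branching hypothesis is $\autc$-equivariant, so that $s=\tfrac12$ remains critical for $L(s,{}^\sigma\Pi\times{}^\sigma\Sigma)$, is a small point that the paper leaves implicit but is needed when running the characterization in reverse for the conjugated pair.
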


\begin{proof} 
Every step of the proof is a suitable generalization of the proof of arithmeticity of $(\GL_1, \chi)$ for representations of 
$\GL_2$ as in subsection~\ref{sec:gl2-gl1}.

To begin, the generalization of Proposition~\ref{prop:gl2-characterize} goes like this: 
$\Pi \otimes \Sigma$ as a representation of 
$(\GL_{n} \times \GL_{n-1})(\A)$ has a non-vanishing $\GL_{n-1}(\A)$ period if and only if 
$L(\tfrac12, \Pi \times \Sigma) \neq 0.$  This follows from using the integrals studied by Jacquet, Piatetskii-Shapiro and Shalika \cite{jpss} as follows. For cusp forms $\phi \in V_\Pi$ and $\phi' \in V_\Sigma$, define 
$$
\ell(s, \phi, \phi') = \int_{\GL_{n-1}(\Q)Z_{n-1}(\A)\backslash\GL_{n-1}(\A)} 
\phi\left(\begin{array}{ll} g & 0 \\ 0 & 1\end{array}\right) \phi'(g) |{\rm det}(g)|^{s - \tfrac12}\, dg, 
$$ 
where $Z_{n-1}$ is the center of $\GL_{n-1}.$  
Our assumption on $\Pi \otimes \Sigma$ is that $\ell(\tfrac12, \phi, \phi') \neq 0$ for some $\phi$ and $\phi'.$
Let $W_\phi$ and $W_{\phi'}$ be the corresponding Whittaker vectors; we may and will take $\phi$ and $\phi'$ so that $W_\phi$ and $W_{\phi'}$ are pure-tensors: $W_\phi = \otimes W_p$ and 
$W_{\phi'} = \otimes W'_p.$ The unfolding argument gives $\ell(s, \phi, \phi') = Z(s, W_\phi, W_{\phi'})$ 
for $\Re(s) \gg 0$, where 
$$
Z(s, W_\phi, W_{\phi'}) = \int_{N_{n-1}(\A)\backslash\GL_{n-1}(\A)} 
W_\phi\left(\begin{array}{ll} g & 0 \\ 0 & 1\end{array}\right) W_{\phi'}(g) |{\rm det}(g)|^{s - \tfrac12}\, dg; 
$$
here $N_{n-1}$ is the subgroup of all upper triangular unipotent elements in $\GL_{n-1}$.  
The analogue of (\ref{eqn:gl2-zeta}) takes the form: 
\begin{equation}
\label{eqn:gln-zeta}
\ell(s,\phi,\phi') = \left(\prod_{p \in S} \frac{Z(s,W_p, W'_p)}{L_p(s,\Pi_p \otimes \Sigma_p)} \right) \cdot 
L(s, \Pi \otimes \Sigma). 
\end{equation}
Using \cite[Theorem 2.7]{jpss} we get that both sides and especially both the factors on the right hand side are entire functions. Evaluating at $s=1/2$ gives
\begin{equation}
\label{eqn:gln-characterize}
\Pi \otimes \Sigma \ \mbox{has a non-vanishing $\GL_{n-1}(\A)$ period} \  
\ \Longleftrightarrow \ L(\tfrac12, \Pi \times \Sigma) \neq 0.
\end{equation}

Next, the hypothesis that $\Pi \in {\rm Coh}(\GL_n, \mu)$ and $\Sigma \in  {\rm Coh}(\GL_{n-1}, \lambda)$ 
puts us in an arithmetic context, however, this doesn't guarantee that $s = \tfrac12$ is critical. With the foresight of wanting to appeal to algebraicity results, we impose the condition: 
$$
{\rm Hom}_{\GL_{n-1}}(E_{\mu} \otimes E_{\lambda}, \ 1\!\!1) \neq 0.
$$
It was observed by Kasten and Schmidt \cite[Theorem 2.3]{kasten-schmidt} that this condition implies 
$s = 1/2$ is critical for the Rankin-Selberg $L$-function $L(s, \Pi \times \Sigma).$ The same condition is also needed 
for an algebraicity result for critical values due the second author; see \cite{raghuram-imrn}. We get the analogue of 
(\ref{eqn:gl2-global-finite}) which looks like
\begin{equation}
\label{eqn:gln-global-finite}
L(\tfrac12, \Pi \otimes \Sigma) \neq 0 \ \Longleftrightarrow \ L_f(\tfrac12, \Pi \otimes \Sigma) \neq 0.
\end{equation}

 Under an additional nonvanishing hypothesis involving only representations at infinity as in 
\cite[Hypothesis 3.10]{raghuram-imrn}, the main result of that paper, \cite[Theorem 1.1]{raghuram-imrn}, says that 
$$
\sigma\left(\frac{L_f( \tfrac12,\Pi \times \Sigma)}
{p^{\epsilon}\, (\Pi)p^{\eta}\, (\Sigma)\mathcal{G}(\omega_{\Sigma_f})\, p_{\infty}(\mu,\lambda)}\right)
\  = \  
\frac{L_f(\tfrac12, \Pi^{\sigma} \times \Sigma^{\sigma})}
{p^{\epsilon}(\Pi^{\sigma})\, p^{\eta}(\Sigma^{\sigma})\, \mathcal{G}(\omega_{\Sigma_f^{\sigma}})\,
p_{\infty}(\mu,\lambda)},
$$
where $p^{\epsilon}\, (\Pi)$ and $p^{\eta}\, (\Sigma)$ are nonzero complex numbers, $\mathcal{G}(\omega_{\Sigma_f})$ is the Gauss sum of the central character of $\Sigma$, and $p_{\infty}(\mu,\lambda)$ is a nonzero complex number determined by $\mu$ and $\lambda$. The analogue of (\ref{eqn:gl2-gross})  follows easily:  
\begin{equation}
\label{eqn:gln-gross}
L_f(\tfrac12, \Pi \otimes \Sigma) \neq 0 \  \Longleftrightarrow \ 
L_f(\tfrac12, {}^\sigma\Pi \otimes {}^\sigma\Sigma) \neq 0.
\end{equation}

Arithmeticity follows from first applying (\ref{eqn:gln-characterize}), (\ref{eqn:gln-global-finite}) and (\ref{eqn:gln-gross}) 
to $\Pi \otimes \Sigma$ and then applying (\ref{eqn:gln-global-finite}) and (\ref{eqn:gln-characterize}) to 
 ${}^\sigma\Pi \otimes {}^\sigma\Sigma$
\end{proof}

\begin{rem}{\rm The hypothesis on the coefficient systems as in Theorem~\ref{thm:ggp-gln}, which is itself a nonvanishing period like condition, is crucial for the methods of \cite{raghuram-imrn} to apply. Let us note that 
it is possible to have a pair of cohomological representations $\Pi$ and $\Sigma$ for which $s = 1/2$ is critical but for which that condition on the coefficients is not satisfied. For example, take $n=3$, 
$\mu = (0,0,0)$ and $\lambda = (1,-1)$; then $E_{\mu}$ is the trivial representation of $\GL_3$. Take 
$\Pi \in {\rm Coh}(\GL_3, \mu)$ and $\Sigma \in  {\rm Coh}(\GL_2, \lambda)$. Then, we leave it to the reader to check that $s = 1/2$ is critical for $L(s, \Pi \times \Sigma)$, but 
${\rm Hom}_{\GL_2}(E_{\mu} \otimes E_{\lambda}, \ 1\!\!1) = 0.$ Now in such a situation, suppose 
the representation $\Pi \times \Sigma$ of $\GL_3 \times \GL_2$ has a nonvanishing $\GL_2$ period, then the above proof is not applicable; however, we still believe that one should have arithmeticity. 
}\end{rem}

\begin{rem}{\rm 
In a certain work in progress \cite{raghuram-preprint}, the second author is studying algebraicity theorems for critical values of $L$-functions for $\GL_n \times \GL_{n-1}$ over any number field. This would then generalize 
Theorem~\ref{thm:ggp-gln} from $\Q$ to any number field. 
}\end{rem}

\begin{rem}{\rm
The assumption  \cite[Hypothesis 3.10]{raghuram-imrn} is a certain limitation of the technique used in that paper. We note that this hypothesis is of a purely local nature and depends only the representations $\Pi_\infty$ and $\Sigma_\infty$ at infinity. For $n=2$ the validity of this hypothesis follows from an explicit calculculation; see \cite{raghuram-tanabe}; it is this calculation that gives the term $(2 \bfgreek{pi} i)^{d_\infty}$ in Proposition~\ref{prop:manin-shimura}. 
For $n=3$ the validity of the hypothesis has been proved by Kasten and Schmidt~\cite{kasten-schmidt}. 
}\end{rem}

\section{Arithmeticity of $\GL_{n} \times \GL_n$ periods for cusp forms on ${\rm GL}_{2n}$}
\label{sec:gl2n}

In this section, we discuss yet another generalization of the example in subsection~\ref{sec:gl2-gl1} where we studied 
$(\GL_1, \chi)$-periods for representations $\pi$ of $\GL_2$. Indeed, in that example, we could have carried through the entire discussion by replacing $\pi$ by $\pi \otimes \chi$ and taking the trivial character of $H = \GL_1 \times \GL_1$ sitting as the diagonal torus in $\GL_2.$ (This imposes the condition that the central character of $\pi\otimes \chi$ is trivial.) 

Now we take $G = \GL_{2n}$ over a totally real number field $F$. Take $H = \GL_n \times \GL_n$ sitting as block diagonal matrices in $G$. Let $\Pi$ be a cuspidal automorphic representation of $G(\A_F)$ which admits a Shalika model (the analogue of the triviality of the central character mentioned above). 
We would like to analyze arithmeticity for the periods: 
$$
\ell(\phi) \ := \ \int_{[H]} \phi\left(\begin{array}{ll} g_1 & 0 \\ 0 & g_2 \end{array}\right)\, dg_1dg_2, \ \ \ \phi \in V_\Pi, 
$$
where $[H] = Z_G(\A_F)H(F)\backslash H(\A_F).$ Arithmeticity in this context follows from certain zeta integrals 
studied by Jacquet-Shalika \cite{jacquet-shalika} and Friedberg-Jacquet \cite{friedberg-jacquet}, and an algebraicity result due to Grobner and the second author \cite{grobner-raghuram-2}. 

\begin{thm} 
\label{thm:gl2n}
Let $\Pi$ be a cohomological cuspidal automorphic representation of 
$\GL_{2n}(\A_F)$ where $F$ is a totally real number field. Suppose that  
\begin{enumerate}
\item $\Pi$ has a nonvanishing $H$-period;
\item the point $s = \tfrac12$ is critical for $L(s, \Pi)$. 
\end{enumerate}
Then for  any $\sigma \in {\rm Aut}(\C)$, the representation 
${}^{\sigma}\Pi$ also has a non-vanishing $H$-period.
\end{thm}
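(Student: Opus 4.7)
The plan is to characterize the nonvanishing of the $H$-period via the Friedberg--Jacquet integral representation, and then to transfer this characterization through $\sigma$ using two ingredients already in place: Theorem~\ref{thm:arithmeticity-shalika} (arithmeticity of Shalika models) and the algebraicity theorem for critical values of $L(s,\Pi)$ due to Grobner and the second author \cite{grobner-raghuram-2}. The structure of the argument parallels very closely the $\GL_2$ case of Subsection~\ref{sec:gl2-gl1}, with the Shalika model playing the role of the Whittaker model.

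First I would establish the analogue of Proposition~\ref{prop:gl2-characterize}. Introducing the parameter $|\det(g_1)/\det(g_2)|^{s-\tfrac12}$ inside the integrand, the unfolding argument of Friedberg--Jacquet \cite{friedberg-jacquet} combined with Jacquet--Shalika \cite{jacquet-shalika} factors $\ell(s,\phi)$ through a Shalika model for $\Pi$ (necessarily with character $\eta = \1$, given that we integrate over $Z_G(\A_F)H(F)\backslash H(\A_F)$) and expresses the resulting zeta integral as the standard $L$-function $L(s,\Pi)$ up to a product of ratios of local zeta integrals over the bad places by the corresponding local $L$-factors, each such ratio being entire and attainable as a nonzero value at any prescribed point. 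Evaluating at $s=\tfrac12$ this yields the equivalence
\[ \ell \not\equiv 0 \text{ on } V_\Pi \iff \Pi \text{ admits a } (\1,\psi)\text{-Shalika model and } L(\tfrac12, \Pi) \neq 0. \]
Applying this to $\Pi$ via hypothesis (1), I deduce simultaneously that $\Pi$ has a Shalika model and that $L(\tfrac12,\Pi)\neq 0$.

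Next, Theorem~\ref{thm:arithmeticity-shalika} directly guarantees that ${}^\sigma\Pi$ is a cohomological cuspidal representation carrying a $(\1,\psi)$-Shalika model. To transport the $L$-value statement, I would note that hypothesis~(2) together with the nonvanishing and finiteness of local archimedean $L$-factors at a critical point gives $L_f(\tfrac12,\Pi)\neq 0$. The main algebraicity theorem of \cite{grobner-raghuram-2} applies precisely to cohomological cusp forms on $\GL_{2n}(\A_F)$ admitting a Shalika model over a totally real $F$, and yields nonzero Shalika periods $\Omega^\epsilon(\Pi)$ satisfying
\[ \sigma\!\left(\frac{L_f(\tfrac12,\Pi)}{\Omega^\epsilon(\Pi)}\right) \;=\; \frac{L_f(\tfrac12, {}^\sigma\Pi)}{\Omega^\epsilon({}^\sigma\Pi)}, \]
from which the trivial corollary $L_f(\tfrac12,\Pi) \neq 0 \iff L_f(\tfrac12, {}^\sigma\Pi) \neq 0$ follows.

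Finally, to close the loop, I need $s=\tfrac12$ to remain critical for $L(s,{}^\sigma\Pi)$, so that the nonvanishing of $L_f(\tfrac12,{}^\sigma\Pi)$ upgrades to that of $L(\tfrac12,{}^\sigma\Pi)$. Here the hypothesis that $F$ is totally real and the fact that $2n$ is even are decisive: Proposition~\ref{prop:arch}(ii) identifies ${}^\sigma\Pi_\infty = \bigotimes_{v\in S_\infty} J_{\mu_{\sigma^{-1}v}}$, which is merely a permutation of the archimedean components of $\Pi_\infty$, so $L_\infty(s,{}^\sigma\Pi)=L_\infty(s,\Pi)$ and likewise for the contragredients; criticality at $s=\tfrac12$ is therefore preserved. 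Combining the Shalika model for ${}^\sigma\Pi$ with $L(\tfrac12,{}^\sigma\Pi)\neq 0$ and applying the converse direction of the Friedberg--Jacquet characterization exhibits a cusp form in $V_{{}^\sigma\Pi}$ with nonvanishing $H$-period, completing the proof. The main obstacle is simply invoking \cite{grobner-raghuram-2} in exactly the generality required; the hypotheses of that algebraicity theorem are, by design, met by any $\Pi$ satisfying (1) and (2), and the totally real assumption on $F$ is inherited from it.
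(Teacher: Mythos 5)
Your proposal follows essentially the same route as the paper: characterize nonvanishing of the $H$-period via the Friedberg--Jacquet zeta integral, reduce to nonvanishing of $L_f(\tfrac12,\Pi)$ at the critical point, transfer through $\sigma$ via the Grobner--Raghuram algebraicity theorem, and run the characterization backwards for ${}^\sigma\Pi$. You are in fact slightly more careful than the paper's exposition on two points that the paper leaves implicit: you explicitly invoke Theorem~\ref{thm:arithmeticity-shalika} to guarantee that ${}^\sigma\Pi$ itself admits a Shalika model (needed for the converse direction of the Friedberg--Jacquet characterization), and you explicitly verify via Proposition~\ref{prop:arch}(ii) that $s=\tfrac12$ remains a critical point for $L(s,{}^\sigma\Pi)$ since the archimedean components are merely permuted; both are correct and genuinely close small gaps in the paper's presentation.
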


\begin{proof} 
Again, we follow the proof of arithmeticity of $(\GL_1, \chi)$ for representations of 
$\GL_2$ as in subsection~\ref{sec:gl2-gl1}. 
\vskip 5pt

To begin, by a theorem of Friedberg-Jacquet \cite{friedberg-jacquet},  $\pi$ has nonzero $H$-period if and only if $\pi$ admits a Shalika model and $L(1/2, \pi) \ne 0$. 
For a cusp form $\phi \in V_\Pi$ define 
$$
\ell(s, \phi) = \int_{H(F)Z_{2n}(\A_F)\backslash H(\A_F)} 
\phi\left(\begin{array}{ll} g_1 & 0 \\ 0 & g_2\end{array}\right)  
\left| \frac{{\rm det}(g_1)}{{\rm det}(g_2)} \right|^{s - \tfrac12}\, dg_1dg_2, 
$$ 
where $Z_{2n}$ is the center of $\GL_{2n}.$  
Our assumption on $\Pi$ is that $\ell(\tfrac12, \phi) \neq 0$ for some $\phi.$  
Let $\S_\phi$  be the corresponding vector in the Shalika model of $\Pi$; as before, we may and will take $\phi$ so that 
$\S_\phi$ is a  pure-tensor: $\S_\phi = \otimes \S_p.$ (For details concerning Shalika models and related matters 
we refer the reader to \cite{grobner-raghuram-2}, and recommend that any serious reader of this section should have that paper by one's side.)

An unfolding argument (\cite[Proposition 3.1.5]{grobner-raghuram-2})  
gives $\ell(s, \phi) = Z(s, \S_\phi)$ where 
$$
Z(s, S_\phi) = \int_{\GL_n(F) \backslash\GL_{n}(\A_F)} 
\S_\phi\left(\begin{array}{ll} g & 0 \\ 0 & 1\end{array}\right) |{\rm det}(g)|^{s - \tfrac12}\, dg. 
$$
The analogue of (\ref{eqn:gln-zeta}) takes the form: 
\begin{equation}
\label{eqn:gl2n-zeta}
\ell(s,\phi) = \left(\prod_{p \in S} \frac{Z(s, \S_p)}{L_p(s, \pi_p)} \right) \cdot 
L(s, \pi). 
\end{equation}
Using \cite[Proposition 3.3.1]{grobner-raghuram-2} we get that the left hand side and both the factors on the right hand side are entire functions. Evaluating at $s=1/2$ gives
\begin{equation}
\label{eqn:gl2n-characterize}
\Pi \ \mbox{has a non-vanishing $\GL_{n}(\A_F)$ period} \  
\ \Longleftrightarrow \ L(\tfrac12, \Pi) \neq 0.
\end{equation}

Next, the hypothesis that $\Pi \in {\rm Coh}(\GL_n, \mu)$  
puts us in an arithmetic context, however, as before, this doesn't guarantee that $s = \tfrac12$ is critical. 
So, we now need the assumption that $s =\tfrac12$ is critical for $L(s, \Pi)$. (In the $\GL_n \times \GL_{n-1}$ case, we needed a stronger condition on the coefficient system, but in the current context  
\cite[Proposition 6.3.1]{grobner-raghuram-2} guarantees that.)  The analogue of 
(\ref{eqn:gln-global-finite}) is:
\begin{equation}
\label{eqn:gl2n-global-finite}
L(\tfrac12, \Pi) \neq 0 \ \Longleftrightarrow \ L_f(\tfrac12, \Pi) \neq 0.
\end{equation}

Under the assumption that $\Pi \in {\rm Coh}(\GL_{2n}, \mu)$ has a Shalika model, the algebraicity result in  
\cite[Theorem 7.1.2]{grobner-raghuram-2} says 
$$
\sigma\left(\frac{L_f( \tfrac12,\Pi \otimes \chi)}
{\omega^{\epsilon_\chi}\, (\Pi) \mathcal{G}(\chi)\, \omega_{\infty}(\mu)}\right)
\  = \  
\frac{L_f(\tfrac12, {}^{\sigma} \Pi \otimes {}^\sigma\chi)}
{\omega^{\epsilon_{{}^{\sigma}\chi}}({}^{\sigma}\Pi)\, \mathcal{G}({}^{\sigma}\chi)\, \omega_{\infty}(\mu)},
$$
where $\chi$ is an algebraic Hecke character, $\epsilon_\chi$ its parity, $\mathcal{G}(\chi)$ its Gau\ss~sum, 
$\omega^{\epsilon_\chi}\, (\Pi)$ is a nonzero complex number, and $\omega_{\infty}(\mu)$ is a nonzero complex number determined by $\mu.$  
The analogue of (\ref{eqn:gln-gross})  follows easily:  
\begin{equation}
\label{eqn:gl2n-gross}
L_f(\tfrac12, \Pi) \neq 0 \  \Longleftrightarrow \ 
L_f(\tfrac12, {}^\sigma\Pi) \neq 0.
\end{equation}

Arithmeticity follows from first applying (\ref{eqn:gl2n-characterize}), (\ref{eqn:gl2n-global-finite}) and (\ref{eqn:gl2n-gross}) to $\Pi$ and then applying (\ref{eqn:gl2n-global-finite}) and (\ref{eqn:gl2n-characterize}) to ${}^\sigma\Pi.$
\end{proof}

\section{Arithmeticity for Classical Groups} \label{S:classical}
 
In this section, we consider the possibility of extending Theorem \ref{thm:clozel}  for $\GL(N)$ to the case of classical groups. By the recent work \cite{arthur-book} of Arthur and others,
one now has a classification of square-integrable automorphic representations for quasi-split classical groups, in terms of automorphic representations of $\GL(N)$.
 In view of this, it is natural to ask if arithmeticity results for $\GL(N)$ can be transferred to these classical groups.

\vskip 5pt

More precisely, let $G$ be a quasi-split symplectic, special orthogonal or unitary group over the number field $F$ and let $\pi$ be a cuspidal automorphic representation of $G(\A_F)$. By Arthur \cite{arthur-book}, one can attach a discrete  $A$-parameter to $\pi$ and this is a multiplicity-free formal sum 
\[ \Psi=  \Pi_1 \boxtimes S_{r_1} \boxplus \cdots \boxplus \Pi_k \boxtimes S_{r_k}, \]
where $\Pi_i$ is a cuspidal automorphic representation of $\GL(n_i)$ (over $F$ or a quadratic extension $E$) satisfying some symmetry conditions and $S_{r_i}$ is the $r_i$-dimensional irreducible representation of $\SL_2(\C)$. 
Moreover, the set of all $\pi$'s with a given discrete $A$-parameter $\Psi$ is a full near equivalence class in the automorphic discrete spectrum of $G$.  If $r_i = 1$ for all $i$,  
$\Psi$ is called a tempered $A$-parameter. 
\vskip 5pt

Now suppose further that $\pi$ is cohomological.
Recall from Proposition \ref{prop:generalG} that for any $\sigma \in \autc$, there is a square-integrable automorphic representation $\tau_{\sigma}$ of $G(\A_F)$ such that
\[  \tau_{\sigma, f} \cong {}^\sigma\pi_f. \]
Note that, for fixed $\sigma \in \autc$, $\tau_{\sigma}$ may not be uniquely determined, but any two such candidates are nearly equivalent to each other and  thus have the same $A$-parameter. 
 It is thus natural to ask:
 \vskip 5pt
 
 \noindent{\bf Question:} How is the $A$-parameter of $\tau_{\sigma}$  related to that of $\pi$?
\vskip 5pt

In the remainder of this section, we shall consider this question for the {\em quasi-split} classical groups $G$ when the $A$-parameter $\Psi$ of $\pi$ is {\em tempered}.  In this case, the $A$-parameter $\Psi$ of $\pi$  is equal to the $L$-parameter of $\pi$. We may also regard $\Psi$   as the  representation 
\[  \Pi = \boxplus_i \Pi_i := {\rm Ind}_P^{\GL(N)} (\otimes_i \Pi_i)  \quad \text{of $\GL(N)$.} \]
One knows moreover that this induced representation is irreducible. 
 In the following, we shall use $\Psi$ and $\Pi$ interchangeably for the $A$-parameter of $\pi$. 
 In particular, for each $v$, the $L$-parameter of $\pi_v$ is precisely the L-parameter of  the generic representation  $\Psi_v = \Pi_v =  \Pi_{1,v} \boxplus \cdots \boxplus \Pi_{k,v}$.  
 
\vskip 5pt

Let us also explicate the symmetry condition satisfied by the summands $\Pi_i$ in $\Psi$  in the various cases:
\vskip 5pt

\begin{itemize}
\item if $G = \Sp(2n)$ or $\SO(2n)$, then $L^S(s, \Pi_i, {\rm Sym^2})$ has a pole at $s=1$ for each $i$;
\item if $G = \SO(2n+1)$, then $L^S(s, \Pi_i, \wedge^2)$ has a pole at $s = 1$;
\item if $G = \U(n)$ , then $L^S(s, \Pi_i , {\rm Asai}^{(-1)^{n-1}})$ has a pole at $s =1$.
\end{itemize}
\vskip 5pt

\begin{rem} \label{rem:u(n)}
{\rm Henceforth, when $G  = \U(n)$, we shall assume that the underlying Hermitian space is defined with respect to a totally complex quadratic extension $E$ of the totally real base field $F$. Moreover, the target group of the functorial lifting  is $\GL(N)$ over the CM field $E$. 
In the rest of this section, in order to simplify the exposition, we shall only give proofs for the symplectic and orthogonal groups, even though the results are stated for $\U(n)$ as well.}
\end{rem}

\vskip 5pt
It is natural  to first investigate if the functorial transfer of unramified representations from classical groups to $\GL(N)$ is $\autc$-equivariant. 
\vskip 5pt

\begin{lemma}  \label{lemma:unram}
Let $k$ be a p-adic field. Let $G$ be an unramified classical group over $k$ and for an unramified representation $\pi$ of $G(k)$, let $\Sigma(\pi)$ be its functorial transfer to the appropriate $\GL(N)$. 
\vskip 5pt
\noindent (i) If $G = \SO(2n+1)$, $\Sp(2n)$ or $\U(n)$, then 
\[  {}^\sigma \Sigma(\pi)  = \Sigma({}^\sigma \pi). \]
\vskip 5pt

\noindent (ii) If $G = \SO(2n)$, then
\[  {}^\sigma (\Sigma(\pi) \otimes  |\ |^{-1/2} ) \otimes  |\ |^{1/2}  = \Sigma({}^\sigma\pi ).   \]
 \end{lemma}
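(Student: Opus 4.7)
The plan is to reduce the statement to a compatibility of Satake parameters and to exploit the $\Q$-rationality of the relevant $L$-group embeddings. Let $v$ be a place where everything is unramified. Via the Satake isomorphism, the unramified representation $\pi$ of $G(k_v)$ corresponds to a semisimple conjugacy class $s_\pi$ in the $L$-group ${}^L G(\C)$, and the transfer $\Sigma(\pi)$ is characterized as the unramified representation of $\GL(N,k_v)$ whose Satake class is $\iota(s_\pi)$, where $\iota$ is the (suitably normalized) standard embedding ${}^L G \hookrightarrow {}^L \GL(N)$ used in the classification. By strong multiplicity one for $\GL(N)$, it will suffice to verify the stated equality at almost every unramified place.

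The first step I would carry out is the Galois compatibility for unramified representations themselves: for any $\sigma \in \autc$, the Satake class of ${}^\sigma\pi$ is $\sigma(s_\pi)$, with $\sigma$ acting on ${}^L G(\C)$ via the Chevalley $\Q$-structure of $\widehat{G}$. This follows from the fact that the integral basis of the spherical Hecke algebra coming from characteristic functions of $K_v$-double cosets gives it a canonical $\Q$-form, and the Satake isomorphism identifies this with a $\Q$-form of the coordinate ring of the variety of Satake classes. Hence the Hecke eigenvalues of $\pi$ are polynomials with $\Q$-coefficients in the entries of $s_\pi$, and they therefore transform compatibly under $\sigma$; the analogous statement for $\GL(N)$ is classical.

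For part (i), the standard representations underlying $\iota$ -- namely $\Sp(2n,\C) \hookrightarrow \GL(2n,\C)$, $\SO(2n+1,\C) \hookrightarrow \GL(2n+1,\C)$, and the standard base change ${}^L \U(n) \to {}^L \GL(n)_{E}$ -- are all defined over $\Q$ with respect to the Chevalley structures (in the unitary case this includes verifying that the outer involution used to form the semidirect product is defined over $\Q$). Thus $\iota$ intertwines the two $\autc$-actions on Satake classes, so that $\Sigma({}^\sigma\pi)_v$ has Satake class $\iota(\sigma(s_\pi)) = \sigma(\iota(s_\pi))$, which is precisely the Satake class of ${}^\sigma\Sigma(\pi)_v$. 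This gives the claim at almost every place and hence globally.

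For part (ii), the transfer from $\SO(2n)$ to $\GL(2n)$ is normalized with an extra factor of $|\cdot|^{1/2}$ relative to a purely $\Q$-rational lift $\Sigma^\natural$, the twist arising from reconciling the half-integer $\rho$-shift of $\GL(2n)$ with the integer $\rho$-shift of the split even orthogonal group. Because $|\cdot|^{1/2}$ takes its value $q_v^{-1/2}$ outside $\Q$, conjugating by $\sigma$ introduces the sign factor $\sigma(|\cdot|^{-1/2})\cdot|\cdot|^{1/2}$ that appears in the lemma. The main obstacle in making this rigorous is pinning down this $|\cdot|^{1/2}$ discrepancy in a convention-independent way and, separately, treating the unitary case at inert primes where the $L$-group is a nontrivial semidirect product and one must check that the Galois action of $E_v/F_v$ interacts cleanly with the $\autc$-action on Satake classes; both amount to explicit, albeit tedious, local computations with unramified principal series.
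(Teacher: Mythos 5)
Your Satake-theoretic framing is the natural one to try, but there is a genuine gap in the central claim, and it is exactly at the point where the content of the lemma (the dichotomy between cases (i) and (ii)) lives.

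You assert that for any unramified $G$, the Satake class of ${}^\sigma\pi$ is $\sigma(s_\pi)$, with $\sigma$ acting through the Chevalley $\Q$-structure, and you justify this by saying that the Satake isomorphism identifies the $\Q$-form of the spherical Hecke algebra with the Chevalley $\Q$-form of the coordinate ring of the Satake variety. This is \emph{not} true in general: the Satake transform of $\1_{K\mu(\varpi)K}$ is $q^{\langle\mu,\rho\rangle}$ times a $\Z$-polynomial in $s_\pi$, and when $\rho$ is only half-integral the factor $q^{\langle\mu,\rho\rangle}$ need not lie in $\Q$. Concretely, for $\GL_2$ the eigenvalue of $T_\varpi$ on $I(\chi_1,\chi_2)$ is $q^{1/2}\,\mathrm{tr}(s_\pi)$, so $\mathrm{tr}(s_{{}^\sigma\pi}) = \bigl(\sigma(q^{1/2})/q^{1/2}\bigr)\,\mathrm{tr}(\sigma(s_\pi))$; in general $s_{{}^\sigma\pi}$ differs from $\sigma(s_\pi)$ by a quadratic twist whenever $\delta_B^{1/2}$ is not $\Q$-valued. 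This happens exactly for $\SO(2n+1)$, $\GL(2n)$ and $\U(2n)$, and \emph{does not} happen for $\Sp(2n)$, $\SO(2n)$, $\GL(2n+1)$, $\U(2n+1)$. So your first step is already false for $G=\SO(2n+1)$ and for the target $\GL(2n)$. For the pair $\SO(2n+1)\to\GL(2n)$ the conclusion of (i) still holds because the two quadratic twists agree, but your argument does not see them at all, so it cannot verify that they cancel; and for $\Sp(2n)\to\GL(2n+1)$ your argument works only because both sides happen to have $\Q$-rational $\delta_B^{1/2}$, which is not something you flagged as a hypothesis. In part (ii) you do sense the correct phenomenon (the half-integer versus integer $\rho$-shift), but you attribute it to an alleged $|\ |^{1/2}$-normalization built into the transfer map, which is misleading: the $L$-group embedding $\SO(2n,\C)\hookrightarrow\GL(2n,\C)$ is perfectly $\Q$-rational, and the twist in the lemma arises because the Hecke-algebra $\Q$-structure matches the Chevalley structure on the $\SO(2n)$-side but not on the $\GL(2n)$-side.

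The paper's proof makes exactly the bookkeeping that your argument elides: it writes $\pi\subset I(\chi)$ (normalized induction), notes that ${}^\sigma I(\chi)=I({}^\sigma\chi\cdot\delta_{G,\sigma})$ with $\delta_{G,\sigma}={}^\sigma\delta_B^{1/2}/\delta_B^{1/2}$, and then checks case by case whether $\delta_{G,\sigma}$ transfers to $\delta_{\GL(N),\sigma}$. This tracks precisely the quadratic discrepancies you are implicitly assuming vanish. Minor additional point: the lemma is a purely local statement about a single $p$-adic field, so the appeal to strong multiplicity one for $\GL(N)$ is out of place.
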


\begin{proof}
Let $T \subset B \subset G$ be a maximal torus contained in a Borel subgroup of $G$, both defined over $k$.   
Similarly, let $T^* \subset B^* \subset \GL(N)$, where $\GL(N)$ is the target of the functorial transfer from $G$. Now Langlands functoriality gives rise to a transfer $\chi \mapsto \chi^*$ from the set of unramified characters of $T$ to that of $T^*$. Moreover, this transfer from $T$ to $T^*$ is $\autc$-equivariant.
\vskip 5pt

Now  $\pi \subset I(\chi) := {\rm Ind}_B^G \chi$ for some unramified character $\chi$ of $T$ (where the induction is normalized here).  The functorial transfer of $I(\chi)$ is then equal to  $I(\chi^*)$.
 For $\sigma \in \autc$, ${}^\sigma \pi$ is still an unramified representation and 
\[  {}^\sigma \pi \subset {}^\sigma I(\chi) = I({}^\sigma \chi \cdot  \delta_{G, \sigma}) \]
where $\delta_{G,\sigma} = {}^\sigma \delta_B^{1/2} / \delta_B^{1/2}$ and $\delta_B^{1/2}$ is the modulus character. 
\vskip 5pt

Thus, we are interested in whether 
\[   I({}^\sigma \chi \cdot  \delta_{G, \sigma}) \mapsto I({}^\sigma \chi^* \cdot \delta_{\GL(N),\sigma}) \]
under functorial transfer. Since ${}^\sigma \chi \mapsto {}^\sigma \chi^*$, it suffices to verify whether  $\delta_{G,\sigma} \mapsto \delta_{\GL(N),\sigma}$.
\vskip 5pt
   Now we note:
\begin{itemize}
\item when $G= \Sp(2n)$, $\SO(2n)$, $\U(2n+1)$ or $\GL(2n+1)$, $\delta_B^{1/2}$ takes value in $\Q^{\times}$ and so $\delta_{G,\sigma}=1$.
\item when $G = \SO(2n+1)$,  $\U(2n)$ or $\GL(2n)$,  $\delta_B^{1/2}$ takes value in $q^{\frac{1}{2} \Z}$ where $q$ is the size of the residue field of $k$, so that $\delta_{G,\sigma}$ is a quadratic character (possibly trivial).
\end{itemize}
In the context of (i),  one sees that  the transfer of $\delta_{G,\sigma}$ to $T^*$ is $\delta_{\GL(N),\sigma}$. On the other hand, when $G = \SO(2n)$, $\delta_{G,\sigma}$ is trivial whereas $\delta_{\GL(2n),\sigma}$ is not necessarily trivial.  A short computation then gives the result in (ii).
\end{proof}

\vskip 10pt
Lemma \ref{lemma:unram} already implies that when $G \ne \SO(2n)$, the transfer (in the sense of Arthur)  of $\tau_{\sigma,f}$ to $\GL(N)$ is nearly equivalent to the abstract representation ${}^\sigma \Pi_f$. 
Next, we note the following crucial proposition:
\vskip 5pt

\begin{prop}  \label{prop:cohom}
Let $G$ be a classical group over a totally real $F$.
Assume that $\pi$ is a cohomological cuspidal representation of $G(\A)$ with tempered $A$-parameter. Then for all infinite places $v$, $\pi_v$ is ``as close to being a discrete series representation as possible". More precisely,
\begin{itemize}
\item[(i)]  if $G(F_v)$ has discrete series representations, then $\pi_v$ is discrete series. This is the case precisely when $G(F_v)$ is not of the form $\SO(2a+1, 2b+1)$ with $a,b \in \Z$. 
\vskip 5pt

\item[(ii)]  If  $G(F_v) = \SO(2a+1,2b+1)$, then $\pi_v = {\rm Ind}_P^G  \chi \boxtimes \pi_{0,v}$ where $P$ is a maximal parabolic subgroup with Levi factor $\GL_1(F_v) \times \SO(2a, 2b)$, $\chi = 1$ or $sgn$, and $\pi_{0,v}$ is a discrete series representation of $\SO(2a,2b)$. 
\end{itemize}
In particular, $\pi_v$ is tempered in all cases. When $G \ne \SO(2n)$, the functorial transfer of $\pi_v$ to $\GL(N)$ is a cohomological representation.
 
\end{prop}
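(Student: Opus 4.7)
My plan is to extract two properties of $\pi_v$ from the hypotheses---temperedness and regular integral infinitesimal character---and then appeal to the Knapp--Zuckerman classification of tempered irreducibles. Temperedness of $\pi_v$ is available through Arthur's theory, which forces each local component of a globally tempered $A$-parameter to be tempered. Regular integrality of the infinitesimal character is immediate from cohomologicality: since $\pi$ contributes to $H^\bullet(S^G, \E_\mu)$, Wigner's lemma forces $\pi_v$ to share its infinitesimal character with $E_{\mu_v}^{\sf v}$, namely $\mu_v + \rho$, which is regular integral. By the Knapp--Zuckerman classification, any tempered irreducible $\pi_v$ has the form $\Ind_P^{G(F_v)}(\sigma \otimes \nu)$ with $P = MAN$ a cuspidal real parabolic, $\sigma$ a discrete series of $M$, and $\nu$ a unitary character of $A$; regularity of the infinitesimal character forces the induction to be irreducible, while unitarity of $\nu$ combined with the reality of $\mu_v + \rho$ forces $\nu$ to be trivial.

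Case (i) corresponds to $P = G(F_v)$, which is admissible precisely when $G(F_v)$ has discrete series, i.e., when $G(F_v)$ and a maximal compact $K_v$ have the same complex rank. A direct rank count shows that among the real classical groups under consideration, the equal-rank condition fails exactly when $G(F_v) \cong \SO(2a+1, 2b+1)$. For case (ii), the cuspidal parabolic of $\SO(2a+1, 2b+1)$ with minimal split component has $\dim A = 1$ and Levi $\GL_1(\R) \times \SO(2a, 2b)$; the latter factor is now equal-rank and carries discrete series. Since $\nu$ is trivial, the character on $\GL_1(\R) = \R^\times$ is finite-order, hence $\chi \in \{1, \mathrm{sgn}\}$, and the discrete series $\pi_{0,v}$ of $\SO(2a, 2b)$ is pinned down by the requirement that its infinitesimal character, together with that of $\chi$, reproduces $\mu_v + \rho$.

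For the transfer statement, the L-parameter $\phi_v$ of $\pi_v$, being a regular integral tempered parameter of a classical group, decomposes as a direct sum of two-dimensional irreducibles $I(\ell_{v,i}) \otimes |\cdot|^{-{\sf w}/2}$ of $W_\R$, together with a one-dimensional character $|\cdot|^{-{\sf w}/2}$ (possibly twisted by $\mathrm{sgn}$) when $N$ is odd. Via the archimedean local Langlands correspondence for $\GL(N)$, this parameter corresponds to the representation $J_{\mu_v}$ (respectively $J_{\mu_v} \otimes \mathrm{sgn}^{\epsilon}$) described in (\ref{eqn:j-mu-even})--(\ref{eqn:pi-infinity-odd}), which is cohomological by construction. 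The exclusion of $G = \SO(2n)$ reflects the half-shift by $|\cdot|^{1/2}$ appearing in Lemma \ref{lemma:unram}(ii), which destroys the purity of the weight needed for cohomologicality on $\GL(2n)$. I anticipate that the main technical point will be the careful bookkeeping in case (ii)---checking that $\Ind_P^{G(F_v)}(\chi \boxtimes \pi_{0,v})$ is indeed irreducible and tempered at the regular parameter produced, and matching the two choices $\chi \in \{1, \mathrm{sgn}\}$ with the two central-sign possibilities for the cohomological representation $J_{\mu_v} \otimes \mathrm{sgn}^\epsilon$ on the odd orthogonal or unitary target $\GL(N)$.
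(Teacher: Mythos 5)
Your overall strategy is close to the paper's, but your very first step — deducing temperedness of $\pi_v$ from Arthur's theory applied to a globally tempered $A$-parameter — is a genuine gap, and it is precisely the point the paper is careful about. A tempered (i.e., $r_i = 1$ for all $i$) $A$-parameter $\Psi = \boxplus_i \Pi_i$ for $G$ has local component $\Psi_v = \boxplus_i \Pi_{i,v}$, and $\Pi_{i,v}$ need not be a tempered local representation: that would be the generalized Ramanujan conjecture for $\GL(n_i)$, which is not known. Consequently $\Psi_v$ need not be a bounded $L$-parameter, and the local $A$-packet (equal to the local $L$-packet here) attached to $\Psi_v$ can a priori contain non-tempered Langlands quotients. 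So ``Arthur's theory forces each local component to be tempered'' is exactly the step that cannot be taken for free. The paper instead deduces temperedness from the conjunction of two weaker inputs: (a) the infinitesimal character of $\pi_v$ is regular integral (cohomology, as you say), which forces the real parts of the continuous exponents in a Langlands data to be half-integers; and (b) the Luo--Rudnick--Sarnak bounds toward Ramanujan, transported through the functorial lift, force those same real parts to lie strictly inside $(-\tfrac12,\tfrac12)$. Together these pinch the exponents to zero and give temperedness. Your proof needs this replacement; without it the appeal to the Knapp--Zuckerman classification of tempered irreducibles is unjustified.

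Once temperedness is secured, the rest of your argument tracks the paper's: Knapp--Zuckerman exhibits $\pi_v$ as unitary induction from discrete series of a cuspidal parabolic, regularity kills nontrivial $\GL_2$-factors and any unitary twist on $A$, and the equal-rank count isolates $\SO(2a+1,2b+1)$ as the sole case with a residual $\GL_1$-factor, giving your clauses (i) and (ii). Your Knapp--Zuckerman phrasing is a somewhat cleaner packaging of the same Levi/Weyl-group casework the paper carries out by hand. The final cohomologicality-of-transfer claim via explicit archimedean parameters $I(\ell)\otimes|\cdot|^{-{\sf w}/2}$ also matches the paper; be aware, though, that the paper's exclusion of $\SO(2n)$ is really about the $|\cdot|^{\pm1/2}$ discrepancy between modulus characters recorded in Lemma~\ref{lemma:unram}(ii) rather than a failure of purity per se — the transfer is ``algebraic'' but shifted, and Corollary~\ref{cor:coho-classical}(ii) is the precise statement.
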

\vskip 5pt
 
\begin{proof}
We shall treat only symplectic and orthogonal groups; the case of unitary groups is similar. 
We make the following observations:
\vskip 5pt

\begin{itemize}
\item[(a)]  Since $\pi$ is cohomological, the infinitesimal character of $\pi_v$ (for  any infinite place $v$) is (strongly) regular and integral. 
\vskip 5pt

\item[(b)]  Since $\pi$ has tempered $A$-parameter, the results of Luo-Rudnick-Sarnak \cite{LRS} imply that $\pi_v$ is ``close to being tempered". 
\end{itemize}
Now we can express $\pi_v$ as a quotient of an induced representation  $I =  {\rm Ind}_{P_v}^{G_v}  \Sigma$, where
\begin{itemize}
\item $P_v$ is a parabolic subgroup with Levi factor 
\[ \GL_1(F_v)^a \times \GL_2(F_v)^b \times G_{0,v} \]
where $G_{0,v}$ is a classical group of the same type as $G_v$;

\vskip 5pt

\item The representation $\Sigma$ has the form
\[  \Sigma = \left( \bigotimes_{i=1}^a \chi_i |-|^{s_i}  \right)  \bigotimes
\left( \bigotimes_{j =1}^b \tau_j |-|^{t_i} \right) \bigotimes  \pi_{0,v} \]
with $\chi_i$ either the trivial or the sign character, $\tau_j$ are discrete series representations of $\GL_2(\R)$ whose central character are trivial or sign, $\pi_{0,v}$ is a discrete series representation of $G_{0,v}$ and $s_i, t_j \in \C$.
\end{itemize}

The integrality of the infinitesimal character of $\pi_v$ implies that the numbers $s_i$ and $t_j$ are half-integers. The ``close to temperedness" of $\pi_v$ in (b) above implies that the numbers $s_i$ and $t_j$ are close to the imaginary axis. Taken together, they imply that $s_i = t_j = 0$. 
In particular, we deduce that $\pi_v$ is tempered. 
\vskip 5pt

 We have yet to make use of the regularity condition in (a). Let us fix a maximal torus $T_v$ of $G_v$ with associated dual complexified Lie algebra $\mathfrak{t}^*$. The character group $X(T_v)$ endows $\mathfrak{t}^*$ with  an integral structure. The infinitesimal character of $\pi_v$ can be regarded as an element in $\mathfrak{t}^*$ up to conjugacy by the absolute Weyl group $W(G_v,T_v)$. Now we observe:
 \begin{itemize}
 \item there are no $\GL_2$-factors in $P_v$.
 \vskip 5pt
 
 To see this, note that if  there were a $\GL_2$-factor in $P_v$, then the infinitesimal character of $\pi_v$ will have the form $( \cdots,  k, -k, \cdots ),$ with $k$ a half integer. But for the classical groups, there is a nontrivial element of the absolute Weyl group which fixes such an element, namely ``exchanging the coordinates $k$ and $-k$, followed by changing the signs of  both coordinates".  This contradicts regularity. 
\vskip 5pt

\item there is at most one $\GL_1$-factor in $P_v$. Indeed, there can be a $\GL_1$-factor if and only if $G_v = \SO(2a+1, 2b+1)$. 
\vskip 5pt

 To see this, note that if there were two $\GL_1$-factors in $P_v$, then the infinitesimal character of $\pi_v$ will have the form $( \cdots, 0, 0 , \cdots )$ which is fixed by a non-trivial element of the absolute Weyl group.  Further, if there is a $\GL_1$-factor, then the infinitesimal character has the form $( \cdots, 0, \cdots )$ and this is fixed by a nontrivial element of  the absolute Weyl group of type $B$ and $C$, namely ``changing the sign  a coordinate". Thus, a $\GL_1$-factor can only occur in type $D$, so that $G = \SO(a,b)$ with $a+b$ even. If a $\GL_1$-factor does occur, then $\SO(a-1, b-1)$ must have a discrete series representation, so that $a$ and $b$ must both be odd.
Conversely, if $a$ and $b$ are both odd, there must be a $\GL_1$-factor, since $\SO(a,b)$ does not have discrete series representations.
\end{itemize}
\vskip 5pt

Summarizing the above observations, we see that unless $G_v = \SO(2a+1, 2b+1)$, there are no $\GL_1$ or $\GL_2$ factors in $P_v$, so that $G_{0,v} = G_v$ and $\pi_v$ is discrete series. When $G_v = \SO(2a+1, 2b+1)$, $G_v$ does not have discrete series and $\pi_v$ is of the form given in (ii).  

\vskip 5pt

 To prove the last assertion of the proposition, let us explicate the discrete series $L$-parameter $\Psi_v$ of $\pi_v$ when $G \ne \SO(2n)$:
\vskip 5pt

\begin{itemize}
\item if $G = \Sp(2n)$, then  $\Psi_v = \oplus_{j=1}^n \phi_{j} \oplus \chi$, where $\chi =1$ or ${\rm sign}$, and the
$\phi_j$'s are  pairwise distinct orthogonal representations of the Weil group $W_{\R}$ of $\R$, which correspond to  discrete series representations of $\GL_2(\R)$ with central character ${\rm sign}$. Moreover,  $(-1)^n \cdot  \chi(-1) = 1$. 
 \vskip 5pt
 
 \item if $G = \SO(2n+1)$, then $\Psi_v = \oplus_{j=1}^n \phi_{j} $ where the $\phi_{j}$'s are pairwise distinct symplectic representations of $W_{\R}$, which correspond to discrete series representations of $\GL_2(\R)$ with trivial central character.
 \vskip 5pt
 
 \item if $G = \U(n)$, then $\Psi_v = \oplus_{j=1}^n \chi_j$, where the $\chi_j$'s are  conjugate dual character of $W_{\C} = \C^{\times}$ of the form $\chi_j(z) = (z/\overline{z})^{\frac{a_j}{2}}$ with 
 $a_j \equiv n+1 \mod 2$. 
 \end{itemize}
 \vskip 5pt
 
 \noindent From this and the description of cohomological representations of $\GL(N)$ given in Section \ref{sec:GL(N)}, we observe that the representation $\Pi_v$ with $L$-parameter $\Psi_v$ is 
 cohomological. The proposition is proved.
 \end{proof}
 \vskip 5pt
 
Recall that  the A-parameter of $\pi$ is $\Pi: = \Pi_1 \boxplus 
\cdots \boxplus \Pi_k$.  As noted by Clozel,  however, it is better to work with a Tate-twisted isobaric sum 
$\stackrel{T}{\boxplus}$: 
\[ \Pi = \boxplus_i \Pi_i =  \ \stackrel{T}{\boxplus}_i  \Sigma_i \]
where
\[   \Sigma_i =  \Pi_i \otimes |\ |^{\frac{n_i-N}{2}}. \]
Then the following corollary  follows from Proposition \ref{prop:cohom}:
\vskip 5pt

\begin{cor}  \label{cor:coho-classical}
(i) Suppose that  $G = \Sp(2n)$, $\SO(2n+1)$ or $\U(n)$ 
 then $\Pi = \ \stackrel{T}{\boxplus}_i \Sigma_i$ is a cohomological representation of $\GL(N)$ and for each $i$,  $\Sigma_i$ is a cohomological cuspidal representation of $\GL(n_i)$. 
\vskip 5pt

(ii) When $G = \SO(2n)$, then $\Pi \otimes |\ |^{-1/2}$ is an algebraic representation of $\GL(N)$ in the sense of \cite{clozel}, and for each $i$,  $\Sigma_i \otimes |\ |^{-1/2}$ is an algebraic representation of $\GL(n_i)$. 
\end{cor}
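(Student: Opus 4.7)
The plan is to deduce the corollary directly from Proposition \ref{prop:cohom}, combined with the explicit description of the archimedean L-parameters recalled in its proof and the purity/integrality conditions from Section \ref{sec:GL(N)}. The role of the Tate-twisted isobaric sum is, of course, to distribute a common ``purity weight'' across the individual summands.

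For part (i), assume $G = \Sp(2n),\, \SO(2n+1)$ or $\U(n)$. The final assertion of Proposition \ref{prop:cohom} says that at every archimedean place $v$, the functorial transfer $\Pi_v$ of $\pi_v$ to $\GL(N, F_v)$ is cohomological, i.e.\ of the shape in \eqref{eqn:pi-infinity-even}--\eqref{eqn:pi-infinity-odd}. Hence $\Pi = \boxplus_i \Pi_i$ is a cohomological representation of $\GL(N)$. To pass from this to the individual summands, I would inspect the explicit list of parameters $\Psi_v = \oplus_j \phi_j$ at the end of the proof of Proposition \ref{prop:cohom}: each irreducible $\phi_j$ is either a discrete-series parameter of $\GL_2(\R)$ or a conjugate-dual character of $W_\C$ with half-integral exponents of a specific parity. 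The L-parameter of $\Pi_{i,v}$ is the subsum of $\Psi_v$ corresponding to the block $\Pi_{i,v}$. The unitary normalization of $\Pi_i$ places its central exponent at the ``motivic'' center of $\Pi_i$, not at the center of the ambient $\Pi$; the twist $|\cdot|^{(n_i - N)/2}$ is exactly what is needed to shift that center onto the common weight $\mathsf{w}$ carried by $\Pi$. After this shift, the exponents of $\Sigma_{i,v}$ satisfy the purity relation $\mu_{v,j} + \mu_{v,n_i - j + 1} = \mathsf w$ uniformly in $v$, and so $\Sigma_{i,v}$ is of the form $J_{\mu_v}$ (possibly twisted by $\operatorname{sgn}^\epsilon$ when $n_i$ is odd). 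Cuspidality of $\Sigma_i$ is immediate from Arthur's description of the $\Pi_i$. Thus each $\Sigma_i$ is a cohomological cuspidal representation of $\GL(n_i)$.

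For part (ii), when $G = \SO(2n)$, the archimedean discussion of Proposition \ref{prop:cohom} still produces a tempered parameter $\Psi_v$, but Lemma \ref{lemma:unram}(ii) shows that the global functorial transfer to $\GL(2n)$ is off by a Tate twist by $|\cdot|^{1/2}$ compared with the normalization in which things are algebraic in Clozel's sense. Consequently $\Pi \otimes |\cdot|^{-1/2}$ has archimedean L-parameters with integer exponents and hence is algebraic in the sense of Clozel, although (since the weight $\mathsf w$ is now odd) it need not be cohomological with respect to an algebraic coefficient system. Repeating the bookkeeping of part (i) on each block with the additional global twist $|\cdot|^{-1/2}$ then shows that each $\Sigma_i \otimes |\cdot|^{-1/2}$ is algebraic on $\GL(n_i)$.

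The only nontrivial step is the last one in each case: the verification that the Tate twist $|\cdot|^{(n_i-N)/2}$ (together with $|\cdot|^{-1/2}$ in the $\SO(2n)$ case) moves the ``half-motivic'' central exponent of $\Pi_i$ into the correct position so that the archimedean exponents of $\Sigma_{i,v}$ satisfy the purity/parity conditions built into \eqref{eqn:j-mu-even}--\eqref{eqn:pi-infinity-odd}. I expect this to be straightforward once one writes out the exponents of the $\phi_j$'s appearing in $\Psi_v$, but it is the one place where the symplectic/orthogonal/unitary cases must be checked separately, and where the sign factor $\operatorname{sgn}^{\epsilon(\Sigma_{i,v})}$ in the odd-rank case requires matching the determinant of $\phi_j$ to the central character of $\Sigma_i$. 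This is precisely the computation forced by the constraints on the summands of a discrete A-parameter recalled just before Remark \ref{rem:u(n)}.
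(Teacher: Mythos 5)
Your approach matches the paper's, which simply asserts the corollary ``follows from Proposition~\ref{prop:cohom},'' and your fill-in of the Tate-twist bookkeeping for part (i) is correct: the archimedean parameter of $\Pi_v$ has its exponents in $\Z + \frac{N-1}{2}$ (the cohomological normalization on $\GL(N)$, as explicated in the last paragraph of the proof of Proposition~\ref{prop:cohom}), and twisting the block $\Pi_{i,v}$ by $|\cdot|^{(n_i-N)/2}$ shifts its exponents into $\Z + \frac{n_i-1}{2}$, which is what $\Sigma_i$ cohomological on $\GL(n_i)$ requires; regularity is inherited from $\Psi_v$, and cuspidality of $\Sigma_i$ comes from Arthur. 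So part (i) is fine.

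For part (ii), two corrections. First, you assert that $\Pi\otimes|\cdot|^{-1/2}$ ``has archimedean L-parameters with integer exponents and hence is algebraic in the sense of Clozel.'' This has the normalization backwards: Clozel's ``algebraic'' for $\GL(2n)$ means exponents in $\Z+\frac{2n-1}{2}=\Z+\tfrac12$, not $\Z$. In the $\SO(2n)$ case the two-dimensional orthogonal summands $I(\ell_j)$ of $\Psi_v$ must have $\ell_j$ even (since $\wedge^2 I(\ell)=\operatorname{sgn}^{\ell+1}$, so $I(\ell)$ is orthogonal precisely when $\ell$ is even), hence $\Pi_v$ itself has integer exponents; the twist by $|\cdot|^{-1/2}$ then moves them into $\Z+\tfrac12$, which is what Clozel-algebraicity actually asks for. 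The conclusion is right; the stated reason inverts it. Second, appealing to Lemma~\ref{lemma:unram}(ii) here is a heuristic, not an argument: that lemma concerns $\autc$-equivariance of the unramified transfer at finite places, whereas the assertion in (ii) is archimedean and must be established by the exponent parity computation above. The common underlying cause is the $\rho$-mismatch between $\SO(2n)$ and $\GL(2n)$, but you should not cite the finite-place lemma as the \emph{proof} of an archimedean fact. Finally, the reason (ii) asserts only ``algebraic'' and not ``cohomological'' is not that the purity weight becomes odd (odd $\mathsf w$ is allowed on $\GL(2n)$); it is that whenever $G(F_v)\cong \SO(2a+1,2b+1)$, Proposition~\ref{prop:cohom}(ii) produces a $\GL_1$-factor whose character contributes the exponent $0$ with multiplicity two in the parameter of $\Pi_v$, so the infinitesimal character fails to be regular and cohomology cannot be nonzero even after the twist.
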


\vskip 5pt
Now we come to the main result of this section.
\vskip 5pt

\begin{thm} \label{thm:classical}
Let $F$ be totally real and let $G$ be a quasi-split classical group over $F$ with $G = \Sp(2n)$, $\SO(2n+1)$ or $\U(n)$ (c.f. Remark \ref{rem:u(n)}).
Let $\pi$ be a cohomological cuspidal representation of   $G$ with a tempered $A$-parameter 
$\Psi = \boxplus_i \Pi_i = \ \stackrel{T}{\boxplus}_i \Sigma_i$. Then we have:
\vskip 5pt

\noindent (i) For $\sigma \in \autc$, let $\tau_{\sigma}$ be a square-integrable automorphic representation such that $\tau_{\sigma,f} \cong {}^\sigma\pi_f$. Then the A-parameter of $\tau_{\sigma}$ is 
\[  {}^\sigma\Psi := {}^\sigma\Sigma_1\stackrel{T}{\boxplus} \cdots \stackrel{T}{\boxplus}  {}^\sigma\Sigma_k.\] 
 
\vskip 5pt

\noindent  (ii) For each infinite place $v$, the $L$-parameter ${}^\sigma \Psi_v$ of $\tau_{\sigma,v}$   is  equal to
\[  \Psi_{\sigma^{-1}v} = \Pi_{1, \sigma^{-1} v} \boxplus \cdots \boxplus \Pi_{k, \sigma^{-1} v}. \] 
 In particular, $\tau_{\sigma,v}$ is a discrete series representation for each infinite place $v$ and 
 thus $\tau_{\sigma}$ is cuspidal. 
 \end{thm}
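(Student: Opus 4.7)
The plan is to identify the $A$-parameter of $\tau_\sigma$ by comparing unramified finite components via Lemma~\ref{lemma:unram}, then apply Proposition~\ref{prop:arch} componentwise to each cuspidal summand of the $A$-parameter to pin down the archimedean $L$-parameters. Cuspidality will then be automatic from the tempered nature of the $A$-parameter together with Arthur's classification.

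For part (i), Proposition~\ref{prop:generalG} provides a square-integrable $\tau_\sigma$ with $\tau_{\sigma,f}\cong{}^\sigma\pi_f$. At each finite place $v$ where $\pi_v$ is unramified, $\tau_{\sigma,v}={}^\sigma\pi_v$ is also unramified, and Lemma~\ref{lemma:unram}(i) (which is applicable precisely because $G\ne\SO(2n)$) identifies the functorial transfer of $\tau_{\sigma,v}$ to $\GL(N)$ as
\[
{}^\sigma(\boxplus_i\Pi_{i,v})=\boxplus_i{}^\sigma\Pi_{i,v}.
\]
By Clozel's Theorem~\ref{thm:clozel} each ${}^\sigma\Pi_i$ is a cuspidal cohomological representation of the appropriate $\GL(n_i)$, so the isobaric sum $\boxplus_i{}^\sigma\Pi_i$ is a bona fide global representation of $\GL(N)$. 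By strong multiplicity one for isobaric sums, the $A$-parameter of $\tau_\sigma$ must coincide with $\boxplus_i{}^\sigma\Pi_i$. Since $|\cdot|$ is fixed by $\autc$, the relation $\Sigma_i=\Pi_i\otimes|\cdot|^{(n_i-N)/2}$ is $\sigma$-equivariant, so the parameter can be rewritten as $\stackrel{T}{\boxplus}_i{}^\sigma\Sigma_i$, yielding (i).

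For part (ii), the $L$-parameter of $\tau_{\sigma,v}$ at an infinite place $v$ is the direct sum of those of the ${}^\sigma\Sigma_{i,v}$. Each $\Sigma_i$ is cohomological cuspidal on $\GL(n_i)$ over $F$ by Corollary~\ref{cor:coho-classical}(i), so when $F$ is totally real Proposition~\ref{prop:arch}(ii) gives ${}^\sigma\Sigma_{i,v}=\Sigma_{i,\sigma^{-1}v}$; in the unitary case one invokes instead Proposition~\ref{prop:arch}(i), applied to $\GL(n_i)$ over the totally complex quadratic extension $E/F$ (cf.\ Remark~\ref{rem:u(n)}). Summing, $({}^\sigma\Psi)_v=\Psi_{\sigma^{-1}v}$. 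Since $\pi_{\sigma^{-1}v}$ is discrete series by Proposition~\ref{prop:cohom}(i), and $\tau_{\sigma,v}$ shares its $L$-parameter, $\tau_{\sigma,v}$ lies in a discrete series $L$-packet at $v$. Finally, because the $A$-parameter ${}^\sigma\Psi$ is tempered with every summand ${}^\sigma\Sigma_i$ cuspidal on its respective $\GL$, Arthur's classification \cite{arthur-book} places $\tau_\sigma$ in the cuspidal automorphic spectrum: the residual discrete spectrum of the groups under consideration only contributes to $A$-parameters containing nontrivial $S_{r_i}$-factors.

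The main obstacle I anticipate is the parity/central-character bookkeeping at real places needed to apply Proposition~\ref{prop:arch}(ii) to each odd-rank summand $\Sigma_i$; this only arises for $G=\Sp(2n)$, since for $\SO(2n+1)$ all summands are symplectic hence of even rank, and for $\U(n)$ one uses part (i) of Proposition~\ref{prop:arch}. One must extract from the explicit archimedean discrete series $L$-parameter of $\pi_v$ recorded in the proof of Proposition~\ref{prop:cohom} (in particular the constraint $(-1)^n\chi(-1)=1$ on the one-dimensional summand) that $\omega_{\Sigma_{i,v}}(-1)$ is independent of $v\in S_\infty$, so that no extraneous sign twist appears in the cohomological conjugation formula.
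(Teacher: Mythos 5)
Your proposal follows essentially the same approach as the paper: Lemma~\ref{lemma:unram} at unramified places, Clozel's Theorem~\ref{thm:clozel} for the cuspidality of the ${}^\sigma\Sigma_i$, Proposition~\ref{prop:arch} for the archimedean local components, and the special treatment of the unique odd-rank summand when $G=\Sp(2n)$ via the parity constraint from Proposition~\ref{prop:cohom}. The one point where you genuinely diverge is the final cuspidality of $\tau_\sigma$: you infer it from the classification of the residual spectrum (tempered $A$-parameters carry no residual representations), whereas the paper argues more directly via Wallach's theorem --- since $\tau_{\sigma,v}$ has been shown to be a discrete series representation at every infinite place and $\tau_\sigma$ is square-integrable, Wallach's result forces $\tau_\sigma$ to be cuspidal; the latter is more elementary and avoids any appeal to the structure of the residual spectrum. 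A second, more cosmetic, difference is in part~(i): you invoke strong multiplicity one for isobaric representations to identify the $A$-parameter with $\boxplus_i{}^\sigma\Pi_i$, while the paper first verifies directly that ${}^\sigma\Psi$ is a bona fide $A$-parameter for $G$ by checking (via Remark~\ref{r:pole} and its Asai analogue from Theorem~\ref{thm:arithmeticity-unitary}) that each ${}^\sigma\Sigma_i$ inherits the correct symmetry type, and then compares near-equivalence classes; your shortcut implicitly yields this symmetry information as a byproduct but doesn't record it explicitly, which is worth doing since that stability of symmetry type under $\sigma$ is a content-bearing fact, not a formality.
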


\begin{proof}
Again, we shall treat only symplectic and orthogonal groups in the proof; the case of unitary groups is similar. 
\vskip 5pt

\noindent   (i) Let us first check that ${}^\sigma \Psi$ is actually an $A$-parameter for $G$.  
   By Corollary \ref{cor:coho-classical} and Theorem \ref{thm:clozel},  one has the cuspidal automorphic representations ${}^\sigma \Sigma_i$ for each $i$. By Remark \ref{r:pole} and its analog for $\U(n)$ (cf. \ref{thm:arithmeticity-unitary}), ${}^\sigma \Sigma_i$ has the same symmetry type as $\Sigma_i$ (as detected by the poles of the exterior square, symmetric square or the Asai L-function in the respective cases) and hence as $\Psi$. Thus ${}^\sigma \Psi$ is a bona-fide $A$-parameter for $G$ and has an associated near equivalence class of square-integrable automorphic representations of $G$. 
   \vskip 5pt
   To prove (i), we need to show that the representation $\tau_{\sigma}$ is contained in the near equivalence class associated to ${}^\sigma \Psi$. This follows immediately by  Lemma \ref{lemma:unram}, since 
   \[  {}^\sigma \Pi_v \ \cong \  \ \stackrel{T}{\boxplus}_i {}^\sigma \Sigma_{i,v}. \]
   \vskip 5pt
   
   \noindent (ii) In the context of the theorem, one knows by Proposition \ref{prop:arch} that 
   \[  {}^\sigma \Psi_v \ = \ \ 
   \stackrel{T}{\boxplus}_i {}^\sigma \Sigma_{i,v} \ = \ \  
   \stackrel{T}{\boxplus}_i \Sigma_{i, \sigma^{-1}v} \ = \ \  
   \boxplus_i \Pi_{i,\sigma^{-1}v}. \]
   To be more precise, we have used Proposition \ref{prop:arch} to deduce that ${}^\sigma \Sigma_{i,v} = \Sigma_{i,\sigma^{-1}v}$. This follows immediately from Proposition \ref{prop:arch} if $n_i$ is even or if $F_v = \C$. 
    Thus, the only issue is when $n_i$  is odd and $F_v = \R$. Such a situation  only occurs when $G = \Sp(2n)$. But it follows by Proposition \ref{prop:cohom} (or rather the last paragraph of its proof) that there is a unique $i_0$ such that $n_{i_0}$ is odd. However, for this particular $i_0$, one has $(-1)^n \cdot \epsilon(\Pi_{i_0,v}) = 1$ for each infinite place $v$, where $\epsilon(\Pi_{i_0,v})$ is the sign in Proposition \ref{prop:arch}. In particular,  $\epsilon(\Pi_{i_0,v})$ is independent of $v$, so that ${}^\sigma \Pi_{i_0,v} = \Pi_{i_0, \sigma^{-1}v}$. 
  \vskip 5pt
  
  Finally, since the archimedean component ${}^\sigma \Psi_v = \Psi_{\sigma^{-1}v}$ is
  a discrete series $L$-parameter for $G_v$ and is the $L$-parameter of $\tau_{\sigma,v}$, we deduce that $\tau_{\sigma,v}$ is a discrete series representation. Since $\tau_{\sigma}$ is a square-integrable automorphic representation, it then follows by a well-known result of Wallach \cite{wallach} that $\tau_{\sigma}$ is cuspidal. This proves (ii).
     \end{proof} 
          \vskip 5pt
 
 \begin{cor}  \label{cor:stable}
 In the context of the theorem, suppose that $\Psi = \Pi$ is a cuspidal representation of $\GL(N)$. 
 Then ${}^\sigma\pi : =( \otimes_{v \in S_{\infty}} \pi_{\sigma^{-1}v}) \otimes {}^\sigma \pi_f$ is a cohomological cuspidal automorphic representation of $G$.
 \end{cor}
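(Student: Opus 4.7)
The plan is to use Arthur's multiplicity formula from \cite{arthur-book} to show that the candidate ${}^\sigma\pi := (\otimes_{v \in S_\infty} \pi_{\sigma^{-1}v}) \otimes {}^\sigma\pi_f$ actually occurs in the cuspidal spectrum of $G$; combined with Theorem \ref{thm:classical}, this identifies the specific archimedean components of the automorphic representation $\tau_\sigma$ produced there. Because $\Psi = \Pi$ is a single cuspidal representation of $\GL(N)$, so is ${}^\sigma\Pi$ by Theorem \ref{thm:clozel}, and it has the same symmetry type as $\Pi$ by Remark \ref{r:pole} (respectively Theorem \ref{thm:arithmeticity-unitary} when $G = \U(n)$). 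Thus ${}^\sigma\Psi := {}^\sigma\Pi$ is again a stable tempered $A$-parameter for $G$, and Arthur's sign character on the global component group $S_{{}^\sigma\Psi}$ is trivial.

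Next, I would verify that ${}^\sigma\pi$ lies in the global $L$-packet $\Pi_{{}^\sigma\Psi}$. At each infinite place $v$, Proposition \ref{prop:arch} identifies ${}^\sigma\Psi_v$ with $\Psi_{\sigma^{-1}v}$, and the component $\pi_{\sigma^{-1}v}$ is precisely the discrete series representation sitting inside the local $L$-packet $\Pi_{\Psi_{\sigma^{-1}v}}$ provided by Proposition \ref{prop:cohom}. At each finite place, the $\autc$-equivariance of local Langlands places ${}^\sigma\pi_v$ inside the local $L$-packet attached to ${}^\sigma\Psi_v$. Hence ${}^\sigma\pi$ is a member of $\Pi_{{}^\sigma\Psi}$.

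Finally, one must verify the character condition of Arthur's multiplicity formula, namely that the global character $\chi_{{}^\sigma\pi} = \prod_v \chi_{({}^\sigma\pi)_v}$ on $S_{{}^\sigma\Psi}$ is trivial. Under the canonical isomorphism $S_\Psi \cong S_{{}^\sigma\Psi}$, the $\autc$-equivariance of local Langlands at finite places yields $\chi_{({}^\sigma\pi)_v} = \chi_{\pi_v}$, while at archimedean places one has $\chi_{({}^\sigma\pi)_v} = \chi_{\pi_{\sigma^{-1}v}}$. Since $\sigma$ merely permutes $S_\infty$, the total product rearranges to $\prod_v \chi_{\pi_v} = \chi_\pi$, which is the trivial character because $\pi$ itself appears cuspidally. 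Arthur's multiplicity formula then yields that ${}^\sigma\pi$ occurs with multiplicity one in the cuspidal spectrum of $G$. The cohomology condition is automatic: each archimedean factor $\pi_{\sigma^{-1}v}$ has nontrivial relative Lie algebra cohomology with coefficients in $E_{{}^\sigma\mu_v} = E_{\mu_{\sigma^{-1}v}}$, so that ${}^\sigma\pi \in {\rm Coh}(G, {}^\sigma\mu)$.

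The main obstacle is the precise form of $\autc$-equivariance needed for the local characters on component groups at finite ramified places. This is expected as part of a suitably Galois-equivariant formulation of the local Langlands correspondence for classical groups, and at archimedean places it can be read off from the explicit description of the parameters used in the proof of Proposition \ref{prop:cohom}.
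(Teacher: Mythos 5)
Your proposal identifies the right underlying mechanism (Arthur's multiplicity formula and stability of the packet), but you fail to fully exploit the stability that you yourself assert, and as a result you manufacture an "obstacle" that is not actually there. When $\Psi = \Pi$ is a \emph{single} cuspidal representation of $\GL(N)$, the global component group $S_\Psi$ is \emph{trivial} for $G = \Sp(2n)$, $\SO(2n+1)$ or $\U(n)$: the centralizer of an irreducible image in $\hat{G}$ reduces to the centre of $\hat{G}$ in each of these cases. This is precisely what "stable packet" means here, and it has the consequence that Arthur's multiplicity formula imposes \emph{no condition whatsoever}: every member of the abstract global $A$-packet of $\Psi$ (and likewise of ${}^\sigma\Psi$) is automorphic, with multiplicity one. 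The paper's proof is therefore essentially one sentence: Theorem~\ref{thm:classical} produces some $\tau_\sigma$ with $\tau_{\sigma,f} \cong {}^\sigma\pi_f$ and with archimedean $L$-parameter $\Psi_{\sigma^{-1}v}$; since the packet is stable one is free to swap $\tau_{\sigma,\infty}$ for $\otimes_v \pi_{\sigma^{-1}v}$, which lies in the same local packets, and the result is still automorphic.

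Your steps 3 and 4 — verifying that ${}^\sigma\pi$ lies in the packet and that the product of local characters on $S_{{}^\sigma\Psi}$ is trivial — therefore verify a vacuous condition: all characters of the trivial group are trivial, and the "$\autc$-equivariance of local Langlands characters at ramified finite places" that you flag as the main outstanding difficulty is a non-issue, since those characters live on a trivial group. Had you not noticed the triviality of $S_\Psi$, the character-rearrangement argument in step 4 would indeed rest on an unestablished Galois-equivariance of the local Langlands correspondence for classical groups, and would not constitute a complete proof. As it is, your proof becomes correct once you observe the triviality of the component group, but it is then a much longer route to a conclusion that follows immediately from stability alone. Also note a small imprecision: the triviality of Arthur's sign character $\epsilon_\Psi$ (which you invoke) is automatic for tempered parameters and is a strictly weaker assertion than stability; it is the triviality of $S_\Psi$ itself that does the work here.
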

 
 \begin{proof}
 Under the hypothesis of the corollary, the $A$-packet associated to $\Psi$ is stable, in the sense that every member of the abstract global $A$-packet is automorphic. In this case, one may replace $\tau_{\sigma,\infty}$ by ${}^\sigma\pi_{\infty}$ and thus take $\tau_{\sigma}$ to be ${}^\sigma\pi$. 
 \end{proof}

    \vskip 10pt

\section{Arithmeticity of periods for classical groups} 
\label{sec:whittaker-classical}

After Theorem \ref{thm:classical}, it makes sense to consider the question of arithmeticity of periods for classical groups. We consider two examples here.

\vskip 5pt

\subsection{\bf Whittaker periods.}
 
 For a quasi-split group $G$ over $F$,
 if we fix a maximal $F$-torus $T$ contained in a Borel subgroup $B = T \cdot N$ over $F$, then 
 for any generic automorphic character $\psi$ of $N(\A_F)$, we may  
 consider the $\psi$-Whittaker period of an automorphic representation $\pi$ of $G$. If this period is nonzero on $\pi$, we say that $\pi$ is $\psi$-generic.
 \vskip 5pt
 
  The group $T(F)$ acts naturally on the set of generic automorphic charcaters of $N(\A_F)$ and the notion of being $\psi$-generic depends only on the $T(F)$-orbit of $\psi$. In particular, if $T(F)$ acts transitively on the set of generic automorphic characters of $N(\A_F)$, then there is no harm in suppressing $\psi$. This is the case when $G = \SO(2n+1)$ and $\U(2n+1)$. When $G = \Sp(2n)$, the set of $T(F)$-orbits of generic automorphic characters is a torsor of $F^{\times}/ F^{\times 2}$, whereas if $G = \U(2n)$, it is a torsor of $F^{\times}/ \mathbb{N}_{E/F} E^{\times}$. 
 
   \vskip 5pt
Now we have:
 
\begin{thm}
\label{thm:whittaker-orthogonal}
Let $F$ be a totally real number field, and let $G = \SO(2n+1)$ or $\U(2n+1)$.  Let $\pi$ be a cohomological cuspidal automorphic representation of $G(\A_F)$ which is globally generic. 
Then for any $\sigma \in \autc$, the conjugated representation 
${}^{\sigma}\pi$ (as defined in Corollary \ref{cor:stable}) is also a cohomological cuspidal automorphic representation of $G(\A_F)$ which is globally generic. 
\end{thm}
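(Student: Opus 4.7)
The plan is to combine the structural classification of Theorem~\ref{thm:classical} with the theory of generic functorial descent---due to Cogdell--Kim--Piatetski-Shapiro--Shahidi and Ginzburg--Rallis--Soudry for $\SO(2n+1)$, and to Kim--Krishnamurthy and Mok for $\U(2n+1)$---together with Shahidi's local uniqueness of the generic member in a tempered $L$-packet. The first step is to encode the hypothesis of global genericity as the statement that the $A$-parameter of $\pi$ is a multiplicity-free tempered parameter $\Psi = \Pi_1 \boxplus \cdots \boxplus \Pi_k$, with each $\Pi_i$ a cuspidal cohomological representation of $\GL(n_i)/F$ (or $\GL(n_i)/E$ in the unitary case) carrying the appropriate self-duality symmetry dictated by $G$.

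Next, I would form the conjugated parameter ${}^\sigma\Psi$ by applying Clozel's theorem (Theorem~\ref{thm:clozel}) to each summand $\Pi_i$ and invoking Remark~\ref{r:pole} (together with its Asai analogue implicit in Theorem~\ref{thm:arithmeticity-unitary}) to verify that ${}^\sigma\Pi_i$ retains the correct self-duality type. This shows that ${}^\sigma\Psi$ is again a legitimate tempered generic $A$-parameter for $G$; the converse-theorem-based generic descent then produces a (unique) globally generic cuspidal automorphic representation $\pi^\flat$ of $G(\A_F)$ whose functorial transfer to $\GL(N)$ is precisely ${}^\sigma\Psi$.

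The remaining task is to identify $\pi^\flat$ with the representation ${}^\sigma\pi$ defined in Corollary~\ref{cor:stable}. At each finite place $v$, the representation ${}^\sigma\pi_v$ is generic (since genericity transfers through Galois conjugation at the local level, as observed in the introduction) and has $L$-parameter $({}^\sigma\Psi)_v$; by Shahidi's conjecture, known for both $\SO(2n+1)$ and quasi-split $\U(2n+1)$, the generic member of the associated tempered local $L$-packet is unique, forcing $\pi^\flat_v = {}^\sigma\pi_v$. At each archimedean place $v$, Theorem~\ref{thm:classical}(ii) identifies the $L$-parameter of $\pi^\flat_v$ with $\Psi_{\sigma^{-1}v}$, a discrete series parameter, and the archimedean version of the same uniqueness yields $\pi^\flat_v = \pi_{\sigma^{-1}v}$. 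Hence $\pi^\flat = {}^\sigma\pi$, which is consequently cuspidal, cohomological and globally generic.

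The hard part will be invoking the right formulation of generic descent uniformly across the two cases (split odd orthogonal \emph{vs.} quasi-split odd unitary) and, in the unitary setting, carefully tracking the twist by the auxiliary character $\chi^{\epsilon \cdot (-1)^{n-1}}$ that appears in Theorem~\ref{thm:asai}(iii). A subsidiary subtlety is that $\Psi$ need not itself be cuspidal on $\GL(N)$, so Corollary~\ref{cor:stable} does not directly supply the identification of $\tau_\sigma$ with ${}^\sigma\pi$; this identification must instead be extracted, place by place, from the local uniqueness of the generic member, which is why the argument is restricted to the groups $\SO(2n+1)$ and $\U(2n+1)$ where this uniqueness is cleanest (no choice of Whittaker datum is lost).
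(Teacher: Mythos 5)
Your proposal is correct and follows essentially the same route as the paper: both arguments (a) use Theorem~\ref{thm:classical} to pass from $\Psi$ to ${}^\sigma\Psi$ and to pin down $\tau_{\sigma,v}$ at infinite places, (b) observe that ${}^\sigma\pi_v$ is generic at every place (noting that for $\SO(2n+1)$ and $\U(2n+1)$ all Whittaker data are conjugate under $T(F_v)$, so there is no character-tracking issue), (c) invoke local uniqueness of the generic member in a tempered local packet, and (d) invoke the global descent theorem of Ginzburg--Rallis--Soudry to supply the unique globally generic cuspidal member of the $A$-packet. The only stylistic difference is the direction of the identification: you construct the descent representation $\pi^\flat$ first and then match it to ${}^\sigma\pi$ place by place, whereas the paper shows ${}^\sigma\pi$ is abstractly generic and therefore must coincide with the globally generic member whose existence descent guarantees; these are logically equivalent.
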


\begin{proof}
Since $\pi$ is globally generic, it follows by \cite{arthur-book} and \cite{ckpss} that the $A$-parameter $\Psi$ of $\pi$ is tempered. 
By Theorem \ref{thm:classical}, we know that ${}^\sigma\pi$ belongs to the global $A$-packet 
associated to the tempered parameter ${}^\sigma\Psi$. Moreover, ${}^\sigma\pi_v$ is locally ${}^\sigma \psi_v$-generic for all finite places $v$. But since all generic characters of $N(F_v)$ are in the same $T(F_v)$-orbit, we deduce that ${}^\sigma \pi_v$ is $\psi_v$-generic as well. The same holds at the infinite places, since ${}^\sigma \pi_v = \pi_{\sigma^{-1} v}$. Thus, we see that ${}^\sigma \pi$ is abstractly $\psi$-generic.
\vskip 5pt

Now in a local $L$-packet of $G(F_v)$, there can be at most one $\psi_v$-generic representation; this is a consequence of the theory of local descent (see Jiang-Soudry \cite{jiangs} for the case of $G = \SO(2n+1)$). 
Thus, ${}^\sigma\pi$ is the only member of its $A$-packet which could be globally $\psi$-generic. However, the theory of global descent  \cite{grs} says that a tempered $A$-packet must contain a globally $\psi$-generic cuspidal automorphic representation. Thus we conclude that 
${}^\sigma\pi$ is a globally generic cohomological cuspidal representation.
\end{proof}

\vskip 5pt
\begin{rem}{\rm 
When $G = \Sp(2n)$ or $\U(2n)$, it is still true that the representation ${}^\sigma \pi$ is abstractly generic with respect to the generic character $(\otimes_{v \in S_{\infty}} \psi_{\sigma^{-1}v}) \otimes  
(\otimes_{v \notin S_{\infty}} {}^\sigma \psi_v )$. However, we do not know whether the $T(\A_F)$-orbit of this generic character contains an automorphic character of $N(\A_F)$. Nevertheless, if $\sigma \in {\rm Aut}(\C/ \Q^{\rm ab})$, then ${}^\sigma \psi_v = \psi_v$ for all finite $v$, and so we conclude as in the theorem that ${}^\sigma \pi$ is globally $\psi$-generic again. One may avoid this complication by working with similitude groups, for example ${\rm GSp}(2n)$; however, one needs to await the generalization of Arthur's results \cite{arthur-book} to the context of similitude groups.
}\end{rem}
  
\vskip 5pt
\subsection{\bf Gross-Prasad period.}
In this speculative final subsection, we consider the Gross-Prasad periods for the classical groups. To be concrete, let us consider the Gross-Prasad period for unitary groups. 
 Let $\pi = \pi_1 \boxtimes \pi_2$ be a tempered cuspidal representation of $G= \U(n) \times \U(n-1)$.
Then a recent preprint of Wei Zhang \cite{zhang} establishes the global Gross-Prasad conjecture  under some local hypotheses.  
In particular, he shows that the period of $\pi$ over the diagonally embedded $\U(n-1)$ is nonzero if and only if 
 \[  L_E(\tfrac12, \Pi_1\times \Pi_2)  \ne 0. \]
 Here, $\Pi_i$ denotes the transfer of $\pi$ to $\GL(n)$ or $\GL(n-1)$ over $E$,  
 \vskip 5pt

Assume now that $\pi$  is stable and cohomological,  say $\pi \in {\rm Coh}(G,\mu)$. Suppose that $\pi$  has a nonvanishing period over the diagonally embedded $\U(n-1)$, and 
that $\Hom_{U(n-1)}(\mu, \C) \ne 0$. Then by Theorem \ref{thm:classical} and its corollary, one knows that ${}^\sigma\pi$ is also a cohomological cuspidal automorphic representation of $\U(n) \times \U(n-1)$. 
Now  one may apply the same argument as in the proof of Theorem \ref{thm:ggp-gln} (with the hypotheses stated there) to deduce that ${}^\sigma\pi$  also has nonvanishing period over the diagonally embedded $\U(n-1).$  We will perhaps leave the detailed treatment of this to a future occasion.

\vskip 10pt

\end{document}